\definecolor{myc}{cmyk}{0.0009,0.8,0.8,0.00}
 \numberwithin{equation}{section}
\newcommand{\N}{{\mathbb N}}
\newcommand{\Z}{{\mathbb Z}}
\newcommand{\R}{{\mathbb R}}
\newcommand{\h}{{\mathcal H}}
\newcommand{\C}{{\mathbb C}}
\newcommand{\SL}{{\mathscr L}}
\def\poscal#1#2{\langle#1,#2\rangle} 
\def\normd#1{\Vert#1\Vert_{L^2(\R^2)}}
\def\normt#1{\Vert#1\Vert}
\def\p{\partial}
\def\Dk{\langle D_k\rangle^{-2}}
\newtheorem{Thm}{Theorem}[section]
\newtheorem{thm}[Thm]{Theorem}
\newtheorem{coro}[Thm]{Corollary}
\newtheorem{rem}[Thm]{Remark}
\newtheorem{lem}[Thm]{Lemma}
\newtheorem{prop}[Thm]{Proposition}
\newtheorem{defi}[Thm]{Definition}
\begin{document}

\title[Oseen vortices]{Pseudospectrum for Oseen vortices operators}
\author[W. Deng]{Wen Deng}
\begin{address}{Wen Deng, Institut de Math\'ematiques de Jussieu, Universit\'e Pierre-et-Marie-Curie (Paris 6), 4 place Jussieu, 75005 Paris, France.}
\email{wendeng@math.jussieu.fr}
\urladdr{http://www.math.jussieu.fr/~wendeng/}
\end{address}

\begin{abstract}
In this paper, we give resolvent estimates for the linearized operator of the Navier-Stokes equation in $\R^2$ around the Oseen vortices, in the fast rotating limit $\alpha\to+\infty$. 
\end{abstract}
\keywords{multiplier method, metric on the phase space}
\subjclass[2000]{ }
\date{\today}
\maketitle
{\footnotesize
\baselineskip=0.72
\normalbaselineskip
\tableofcontents}

\section{Introduction}

\subsection{The origin of the problem}\label{sec.origin}

Consider the motion of a viscous incompressible fluid in the whole plane, which is described by the Navier-Stokes equation in $\R^2$.
In two dimensions where the vorticity is a scalar, it is more convenient to study the evolution of the vorticity which is given by
\begin{equation}\label{eq.NS}
\frac{\p\omega}{\p t}+v\cdot\nabla\omega=\nu\Delta\omega,\quad x\in\R^2,\quad t\geq0,
\end{equation}
where $\nu$ is the kinematic viscosity, $\omega(x,t)\in\R$ is the vorticity of the fluid, $v(x,t)\in\R^2$ is the divergence-free velocity field reconstructed from $\omega$ by the Biot-Savart law 
\begin{equation}\label{eq.BS}
v(x,t)=(K_{BS}\ast \omega) (x,t)=\frac{1}{2\pi}\int_{\R^2}\frac{(x-y)^\perp}{|x-y|^2}\omega(y,t)dy,
\end{equation}
where we denote $x^\perp=(-x_2,x_1)$ for $x=(x_1,x_2)\in\R^2$.
The equation \eqref{eq.NS} is globally well-posed in $L^1(\R^2)$ (\cite{BenArtzi}, \cite{Kato2}), i.e. for any initial data $\omega_0\in L^1(\R^2)$, \eqref{eq.NS} has a unique global solution $\omega\in C^0([0,+\infty);L^1(\R^2))$ such that $\omega(0)=\omega_0$. 
The {\it total circulation} of the velocity field
\begin{equation}
\int_{\R^2}\omega(x,t)dx
=\lim_{R\to+\infty}\oint_{|x|=R}v(x,t)\cdot dl
\end{equation}
is a quantity conserved by the semi-flow defined by \eqref{eq.NS} in $L^1(\R^2)$.
It is well-known that the equation \eqref{eq.NS} has a family of explicit self-similar solutions, called {\it Oseen vortices}, which is given by
\begin{equation}
\omega(x,t)=\frac{\alpha}{\nu t}G\big(\frac{x}{\sqrt{\nu t}}\big),\quad v(x,t)=\frac{\alpha}{\sqrt{\nu t}}v^G\big(\frac{x}{\sqrt{\nu t}}\big), 
\end{equation}
where
\begin{equation}\label{eq.G}
G(x)=\frac{1}{4\pi}e^{-|x|^2/4},\qquad v^G(x)=\frac{1}{2\pi}\frac{x^\perp}{|x|^2}\big(1-e^{-|x|^2/4}\big),\quad x\in\R^2,
\end{equation}
and the parameter $\alpha\in\R$ is referred to as the {\it circulation Reynolds number}. In fact these solutions are trivial in the sense that $v(x,t)\cdot\nabla\omega(x,t)\equiv0$ so that \eqref{eq.NS} reduces to the linear heat equation, and the Oseen vortices are the only self-similar solutions to the Navier-Stokes equations in $\R^2$ whose vorticity is integrable.
Moreover, it is proved by T. Gallay and C.E. Wayne in \cite{GW1} that if the initial vorticity $\omega_0$ is in $L^1(\R^2)$, then the solution $\omega(x,t)$ of \eqref{eq.NS} satisfies
\begin{equation}
\lim_{t\to+\infty}\|\omega(\cdot,t)-\frac{\alpha}{\nu t}G\big(\frac{\cdot}{\sqrt{\nu t}}\big)\|_{L^1(\R^2)}=0,
\end{equation}
where $\alpha=\int_{\R^2}\omega_0(x)dx$.
In physical terms, this means that the Oseen vortices are globally stable for any value of the circulation Reynolds number $\alpha$. In contrast to many situations in hydrodynamics, such as the Poiseuille or the Taylor-Couette flows, increasing the Reynolds number does not produce any instability.

In order to investigate the stability of the Oseen vortices, we introduce the self-similar variables $\widetilde x=x/\sqrt{\nu t}$, $\widetilde t=\log(t/T)$ and we set
$$\omega(x,t)=\frac{1}{t}\widetilde\omega\big(\frac{x}{\sqrt{\nu t}},\log\frac{t}{T}\big),\quad
v(x,t)=\sqrt{\frac{\nu}{t}}\widetilde v\big(\frac{x}{\sqrt{\nu t}},\log\frac{t}{T}\big). $$
Then the rescaled system reads (replacing $\widetilde x$ by $x$, $\widetilde\omega$ by $\omega$ and so on)
\begin{equation}\label{eq.vorticity}
\frac{\p\omega}{\p t}+ v\cdot\nabla\omega=\Delta\omega+\frac{1}{2} x\cdot\nabla\omega+\omega,\quad  x\in\R^2,\  t\geq0,
\end{equation}
where $\omega(x,t)\in\R$ is the rescaled vorticity, $v(x,t)\in\R^2$ is the rescaled velocity field again given by the Biot-Savart law \eqref{eq.BS}.
Then for any $\alpha\in\R$, the Oseen vortex $\omega=\alpha G$ is a stationary solution of \eqref{eq.vorticity}.
%
Linearizing the equation \eqref{eq.vorticity} at $\alpha G$, we get a linear evolution equation
\begin{equation*}
\frac{\p \omega}{\p t}
=-({\mathcal L}+\alpha\Lambda)\omega,
\end{equation*}
where 
\begin{equation}
{\mathcal L}\omega=-\Delta\omega-\frac{1}{2}x\cdot \nabla \omega-\omega,\quad \Lambda\omega=v^G\cdot\omega+(K_{BS}\ast\omega)\cdot\nabla G.
\end{equation}
It turns out that the operator ${\mathcal L}$ is self-adjoint, non-negative on the weighted space $L^2(\R^2;G^{-1}dx)$ and $\Lambda$ is a relatively compact perturbation of ${\mathcal L}$, which is the sum of two skew-adjoint operators on $L^2(\R^2;G^{-1}dx)$. The spectrum of ${\mathcal L}+\alpha\Lambda$ is a sequence of eigenvalues by classical perturbation theory (\cite{Kato}).
Introducing the following subspaces of $Y=L^2(\R^2;G^{-1}dx)$:
\begin{align*}
Y_0&=\big\{\omega\in Y;\ \int_{\R^2}\omega(x) dx=0\big\}=\{G\}^\perp,\\
Y_1&=\big\{\omega\in Y_0;\ \int_{\R^2}x_j\omega(x) dx=0\text{ for }j=1,2\big\}=\{G;\p_1G;\p_2G\}^\perp,\\
Y_2&=\big\{\omega\in Y_1;\ \int_{\R^2}|x|^2\omega(x) dx=0\big\}=\{G;\p_1G;\p_2G;\Delta G\}^\perp,
\end{align*}
which are invariant spaces for ${\mathcal L}$ and $\Lambda$,
the following spectral bounds for ${\mathcal L}+\alpha\Lambda$ are proved in \cite{GW1}, 
\begin{align*}
&{\rm Spec}({\mathcal L}+\alpha\Lambda)\subset\big\{z\in\C;\ \text{Re}(z)\geq0\big\}\quad\text{in }Y,\\
&{\rm Spec}({\mathcal L}+\alpha\Lambda)\subset\big\{z\in\C;\ \text{Re}(z)\geq\frac{1}{2}\big\}\quad\text{in }Y_0,\\
&{\rm Spec}({\mathcal L}+\alpha\Lambda)\subset\big\{z\in\C;\ \text{Re}(z)\geq1\big\}\quad\text{in }Y_1,\\
&{\rm Spec}({\mathcal L}+\alpha\Lambda)\subset\big\{z\in\C;\ \text{Re}(z)>1\big\}\quad\text{in }Y_2,\text{ if }\alpha\neq0.
\end{align*}
These spectral bounds allow us to obtain estimates on the semigroup associated to ${\mathcal L}+\alpha\Lambda$, which can be used to show that Oseen vortex $\alpha G$ is a stable stationary solution of \eqref{eq.vorticity} for any $\alpha\in\R$. However, these bounds are not precise. 
The eigenvalues that do not move are those which correspond to eigenvectors in the kernel of $\Lambda$.
All eigenvalues of ${\mathcal L}+\alpha\Lambda$ which correspond to eigenvectors in the orthogonal complement of ${\rm ker}(\Lambda)$, have a real part that goes to $+\infty$ as $|\alpha|\to\infty$, observed numerically by A. Prochazka and D. Pullin \cite{ProPullin} and recently proved by Y. Maekawa \cite{Maekawa}.

In this paper, we are interested in pseudospectral properties of this linearized operator.
We conjugate the linear operators ${\mathcal L}$ and $\Lambda$ with $G^{1/2}$, then we obtain two operators on $L^2(\R^2;dx)$ 
\begin{align}
L\omega&=G^{-1/2}{\mathcal L}G^{1/2}\omega=-\Delta\omega+\frac{|x|^2}{16}\omega-\frac{1}{2}\omega,\\
\label{eq.M}M\omega&=G^{-1/2}\Lambda G^{1/2}\omega=v^G\cdot\nabla\omega-\frac{1}{2}G^{1/2}x\cdot\big(K_{BS}\ast(G^{1/2}\omega)\big).
\end{align}
Up to some numerical constants, $L$ is the two-dimensional harmonic oscillator, which is self-adjoint and non-negative on $L^2(\R^2;dx)$. On the other hand, both terms in $M$ are separately skew-adjoint on $L^2(\R^2;dx)$.
Letting
\begin{align}
\notag\h_\alpha\omega&=L\omega+\alpha M\omega,\qquad \omega\in L^2(\R^2;dx)\\
\label{eq.h}&=\big(-\Delta\omega+\frac{|x|^2}{16}\omega-\frac{1}{2}\omega\big)+\alpha\Bigl[v^G\cdot\nabla\omega-\frac{1}{2}G^{1/2}x\cdot\big(K_{BS}\ast(G^{1/2}\omega)\big)\Big],
\end{align}
our aim is to give estimates for the resolvent of the non-self-adjoint operator $\h_\alpha$ along the imaginary axis, in the fast rotating limit $\alpha\to+\infty$.

\subsection{About non-self-adjoint operators}

In many problems originated from mathematical physics, one encounters a linear evolution equation with a non-self-adjoint generator, of the form $H=A+iB$, where $A$ is self-adjoint, non-negative and $iB$ is skew-adjoint such that $A,B$ do not commute.  
$A$ is usually called the dissipative term and $iB$ the conservative term.
The conservative term can affect and sometimes enhance the dissipative effects or the regularizing properties of the whole system.
When a large skew-adjoint term $iB$ is present, the spectrum and the pseudospectrum of the whole operator $H$ may be strongly stabilized. In particular, the norm of the resolvent $\|(H-z)^{-1}\|$ may tend to 0 quickly.

In the paper \cite{GGN}, a one-dimensional analogue of $\h_\alpha$ is studied by I. Gallagher, T. Gallay and F. Nier
\begin{equation}
H_\epsilon=-\p_x^2+x^2+\frac{i}{\epsilon}f(x),\quad x\in\R,
\end{equation} 
where $\epsilon>0$ is a small parameter, $f\colon\R\to\R$ is a bounded smooth function.
Here the limit $\epsilon\to0$ corresponds to the fast rotating limit $\alpha\to+\infty$.
They studied the asymptotics of two quantities related to the spectral and pseudospectral properties in the limit $\epsilon\to0$. More precisely, they define $\Sigma(\epsilon)$ as the infimum of the real part of the spectrum of $H_\epsilon$ and 
$$\Psi(\epsilon)^{-1}=\sup_{\lambda\in\R}\|(H_\epsilon-i\lambda)^{-1}\|$$
as the supremum of the norm of the resolvent of $H_\epsilon$ along the imaginary axis. Under some appropriate conditions on $f$, both quantities $\Sigma(\epsilon)$, $\Psi(\epsilon)$ tend to infinity as $\epsilon\to0$ and lower bounds are given by using the so-called hypocoercive method. Furthermore, they focused on Morse functions of $C^3(\R;\R)$ which are bounded together with their derivatives up to the third order, and which behave like $|x|^{-k}$ as $|x|\to \infty$ (Hypothesis 1.6 in \cite{GGN}). For functions verifying these hypotheses, some precise and optimal estimates on $\Psi(\epsilon)$ are proved (Theorem 1.8 in \cite{GGN}): there exists $M\geq1$ such that for any $\epsilon\in(0,1]$,
$$\frac{1}{M\epsilon^{\nu}}\leq \Psi(\epsilon)\leq \frac{M}{\epsilon^{\nu}},\quad\text{with }\nu=\frac{2}{k+4}.$$
Their proof is based on the localization techniques and some semiclassical subelliptic estimates.



In our recent work \cite{D}, a two-dimensional non-self-adjoint operator is considered
\begin{equation}\label{eq.La}
{\mathcal L}_\alpha=-\Delta+|x|^2+\alpha\sigma(|x|)\p_\theta,\quad x\in\R^2,
\end{equation}
where $\sigma(r)=r^{-2}(1-e^{-r^2})$, $\p_\theta=x_1\p_{2}-x_2\p_1$ and $\alpha$ is a positive parameter tending to infinity. 
Note that up to some numerical constants, the differential operator ${\mathcal L}_\alpha$ is equal to the operator $\h_\alpha$ given in \eqref{eq.h}, by neglecting the second member in the skew-adjoint part $\alpha M$, which is a non-local, lower-order term.
In that paper, we gave a complete study of the resolvent of ${\mathcal L}_\alpha$ along the imaginary axis in the limit $\alpha\to+\infty$ and proved an estimate of type (Theorem 2.2 in \cite{D})
\begin{equation}\label{eq.D}
\sup_{\lambda\in\R}\|({\mathcal L}_\alpha-i\lambda)^{-1}\|_{{\mathcal L}(\tilde L(\R^2))}\leq C\alpha^{-1/3},
\end{equation}
which is optimal.
The result is established by using a multiplier method, metrics on the phase space and localization techniques.

The present paper is devoted to proving resolvent estimates similar to \eqref{eq.D} for the whole linearized operator $\h_\alpha$ in \eqref{eq.h}.

\medskip

\noindent{\bf Acknowledgements.}
The author would like to thank Professors I. Gallagher and T. Gallay for kindly forwarding the question and generously providing helpful and detailed motivation arguments. In particular, the author is very grateful to the invitation of the summer school ``Spectral analysis of non-selfadjoint operators and applications", held in University Rennes I, June 2011, where the notes \cite{Gallay} were taken.

\section{Statement of the result}

\subsection{The theorem}
Using the notations in Section \ref{sec.origin}, we consider the operator on $L^2(\R^2;dx)$
\begin{align}
\notag\h_\alpha\omega&=\underbrace{-\Delta\omega+\frac{|x|^2}{16}\omega-\frac{1}{2}\omega}_{\text{self-adjoint and non-negative on }L^2(\R^2)}\\
\label{eq.h1}&\qquad\qquad+\underbrace{\alpha v^G\cdot\nabla\omega-\frac{\alpha}{2}G^{1/2}x\cdot\big(K_{BS}\ast(G^{1/2}\omega)\big)}_{\text{skew-adjoint on }L^2(\R^2)},
\end{align}
where $G$, $v^G$ are given by \eqref{eq.G}, $K_{BS}$ is given in \eqref{eq.BS} and $\alpha\geq1$ is a large parameter. The real part of $\h_\alpha$ is the two-dimensional harmonic oscillator and the imaginary part of $\h_\alpha$ is the sum of a divergence-free vector field and a non-local integral operator, multiplied by the circulation Reynolds number $\alpha$.

The skew-adjoint part of $\h_\alpha$ vanishes on radial functions and in particular the function $e^{-|x|^2/8}$ is an eigenfunction of $\h_\alpha$ corresponding to the eigenvalue  0, for any $\alpha\in\R$, which implies that the ground state of the two-dimensional harmonic-oscillator does not move under the large skew-adjoint perturbation.
Moreover, one can also check that the skew-adjoint part of $\h_\alpha$ vanishes on the functions $x_1e^{-|x|^2/8}$, $x_2e^{-|x|^2/8}$. 
Thus we shall work in some subspaces of $L^2(\R^2;dx)$, defined below.

Using polar coordinates in $\R^2$, for $k_0\geq1$, we define the subspace of $L^2(\R^2;dx)$
\begin{equation}
X_{k_0}=\Big\{\omega\in L^2(\R^2;dx); \ \omega(r\cos\theta,r\sin\theta)=\sum_{|k|\geq k_0}\omega_k(r)e^{ik\theta}\Big\},
\end{equation}
which is a Hilbert space equipped with the norm $\normd{\cdot}$ and which is an invariant space for $\h_\alpha$.

\begin{defi}[Domain of $\h_\alpha$]\label{def.domain}
Let
$$D=\{\omega\in L^2(\R^2);\ \omega\in H^2(\R^2),\ |x|^2\omega\in L^2(\R^2)\}.$$
\end{defi}

Then $(\h_\alpha,D)$ is a closed operator on $L^2(\R^2)$.  Moreover, for any $k_0\geq1$, $\h_\alpha$ is a closed operator on $X_{k_0}$ with dense domain $D\cap X_{k_0}$ and its the numerical range defined by
$$\Theta(\h_\alpha;X_{k_0})=\{\poscal{\h_\alpha\omega}{\omega}_{L^2(\R^2)}\in\C;\ \omega\in D\cap X_{k_0},\ \normd{\omega}=1\}$$
is included in the set $\{z\in\C;\ {\rm Re}z\geq k_0/2 \}$, so that its spectrum is also contained in $\{z\in\C;\ {\rm Re}z\geq k_0/2 \}$.

Now let us state our main result.

\begin{thm}\label{thm.result}
There exist constants $C>0$, $k_0\geq3$, $\alpha_0\geq8\pi$ such that for all $\alpha\geq\alpha_0$, $\lambda\in\R$, for all $\omega\in C_0^\infty(\R^2)\cap X_{k_0}$, we have 
\begin{equation}\label{eq.result1}
\normd{(\h_\alpha-i\lambda)\omega}\geq C\alpha^{1/3}\normd{|D_\theta|^{1/3}\omega},
\end{equation}
where $|D_\theta|^{1/3}\omega=\sum_k|k|^{1/3}\omega_k(r)e^{ik\theta}$, for $\omega=\sum_k\omega_k(r)e^{ik\theta}$.
In particular, we have
\begin{equation}\label{eq.result2}
\|(\h_\alpha-i\lambda)^{-1}\|_{{\mathcal L}(X_{k_0})}\leq C^{-1}\alpha^{-1/3}k_0^{-1/3}.
\end{equation}
\end{thm}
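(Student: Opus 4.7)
The plan is to reduce the two-dimensional problem to a family of one-dimensional radial operators via angular Fourier decomposition, apply the multiplier / phase-space resolvent estimate of the companion paper \cite{D} to the local part of $\h_\alpha$, and absorb the new non-local Biot--Savart term, which is the principal new difficulty. Since $\||D_\theta|^{1/3}\omega\|^2\geq k_0^{2/3}\|\omega\|^2$ on $X_{k_0}$, estimate \eqref{eq.result2} follows from \eqref{eq.result1}, so the real task is \eqref{eq.result1}.

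Write $\omega=\sum_{|k|\geq k_0}\omega_k(r)e^{ik\theta}$. Using the stream function $\psi=-(-\Delta)^{-1}(G^{1/2}\omega)$ and the polar identity $x\cdot\nabla^\perp=-\partial_\theta$, a short computation gives $x\cdot K_{BS}*(G^{1/2}\omega)=ik\psi_k(r)e^{ik\theta}$ on mode $k$, with $\psi_k=(-\Delta_k)^{-1}(G^{1/2}\omega_k)$ and $-\Delta_k=-\partial_r^2-r^{-1}\partial_r+k^2r^{-2}$. Combined with the fact that $v^G\cdot\nabla=\sigma(r)\partial_\theta$ preserves angular modes (where $\sigma(r)=(1-e^{-r^2/4})/(2\pi r^2)$), this shows $\h_\alpha$ is block-diagonal in the Fourier expansion with $k$-th block
\begin{equation*}
\h_{\alpha,k}=L_k+ik\alpha\sigma(r)-\tfrac{ik\alpha}{2}T_k,\qquad T_k:=G^{1/2}(-\Delta_k)^{-1}G^{1/2},
\end{equation*}
acting on $L^2(\R_+;r\,dr)$, with $L_k=-\Delta_k+r^2/16-1/2$. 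By Parseval, \eqref{eq.result1} reduces to the mode-wise estimate $\|(\h_{\alpha,k}-i\lambda)\omega_k\|\geq C\alpha^{1/3}|k|^{1/3}\|\omega_k\|$ for all $|k|\geq k_0$, $\lambda\in\R$. A first easy observation: $T_k$ is positive self-adjoint, so the non-local term is skew-adjoint and
\begin{equation*}
\mathrm{Re}\langle(\h_{\alpha,k}-i\lambda)\omega_k,\omega_k\rangle=\langle L_k\omega_k,\omega_k\rangle\geq\tfrac{|k|}{2}\|\omega_k\|^2
\end{equation*}
by the $2$D harmonic-oscillator spectrum on mode $k$, which via Cauchy--Schwarz handles the ``high-mode'' regime $|k|\geq c\alpha^{1/2}$ directly.

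For $k_0\leq|k|<c\alpha^{1/2}$, split $\h_{\alpha,k}=\mathcal{L}_{\alpha,k}+N_{\alpha,k}$ where $\mathcal{L}_{\alpha,k}=L_k+ik\alpha\sigma(r)$ is the mode-$k$ restriction of the operator \eqref{eq.La}; the multiplier / Airy-type subelliptic method of \cite{D}, applied on mode $k$ with effective large parameter $|k|\alpha$, yields $\|(\mathcal{L}_{\alpha,k}-i\lambda)\omega_k\|\geq 2C\alpha^{1/3}|k|^{1/3}\|\omega_k\|$. The difficulty is the non-local remainder $N_{\alpha,k}=-\frac{ik\alpha}{2}T_k$: the Hardy bound $-\Delta_k\geq k^2/r^2$ gives $T_k\leq r^2G/k^2$ in the form sense, hence $\|N_{\alpha,k}\|\leq C\alpha/|k|$, which \emph{exceeds} the target $\alpha^{1/3}|k|^{1/3}$ in the current range of $|k|$; so $N_{\alpha,k}$ is not a perturbation in operator norm. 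I plan to exploit two features: (i) the form bound $\langle T_k\omega_k,\omega_k\rangle\leq k^{-2}\int r^2G|\omega_k|^2r\,dr$ is small as soon as $\omega_k$ concentrates away from the Gaussian region $\{r\lesssim 1\}$; (ii) within that region, the centrifugal piece $k^2/r^2$ of $L_k$ contributes $\gtrsim k^2\int_{r\leq R}|\omega_k|^2r\,dr$ to the real part for any fixed $R$, which beats the target. Testing the multiplier identity of \cite{D} against $g\omega_k$ with a cutoff separating the narrow resonant band $\{|r-r_*|\lesssim\alpha^{-1/3}|k|^{-1/3}\}$ from the Gaussian support of $T_k$ renders the cross-term $\mathrm{Im}\langle N_{\alpha,k}\omega_k,g\omega_k\rangle$ exponentially small in the Gaussian weight and therefore absorbable.

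The principal technical obstacle is the borderline case where the resonance radius $r_*$ enters the Gaussian region (i.e.\ $\lambda\sim k\alpha\sigma(0)$); there the two mechanisms above must be combined, and one must control all cross-terms $[\chi,\mathcal{L}_{\alpha,k}]$, $[g,T_k]$ and $\langle gT_k\omega_k,\omega_k\rangle$ uniformly in $k_0\leq|k|\leq c\alpha^{1/2}$ and $\lambda\in\R$, without degrading the $\alpha^{1/3}|k|^{1/3}$ rate --- this is the place where the thresholds $k_0\geq 3$ and $\alpha\geq\alpha_0$ of Theorem~\ref{thm.result} must be fixed. Once the mode-wise estimate is secured, Parseval and the computation of the first paragraph deliver \eqref{eq.result1} and \eqref{eq.result2}.
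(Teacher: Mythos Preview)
Your high-level reduction is correct and matches the paper: angular Fourier decomposition, block-diagonal operators $\h_{\alpha,k}$, and the observation that the nonlocal piece $T_k=G^{1/2}(-\Delta_k)^{-1}G^{1/2}$ is skew-adjoint and satisfies the form bound $T_k\le k^{-2}r^2G$. You also correctly identify the central obstruction: $\|N_{\alpha,k}\|\sim\alpha/|k|$ is far too large to be treated as an operator-norm perturbation of the local estimate from \cite{D}.

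The gap is in how you then propose to absorb $N_{\alpha,k}$. Your mechanism is a cutoff separating the resonant band $\{|r-r_*|\lesssim(\alpha|k|)^{-1/3}\}$ from the Gaussian support of $T_k$, with the claim that the cross term becomes ``exponentially small in the Gaussian weight.'' That is only true when $r_*$ is large; when the resonance sits inside the Gaussian region (your own ``principal technical obstacle,'' i.e.\ $\lambda$ near $k\alpha\sigma(0)$), the two supports overlap and no exponential gain is available. In that regime you fall back on the centrifugal term $k^2/r^2$, but you give no argument linking this to the multiplier identity; in particular, the commutators $[\chi,N_{\alpha,k}]$ with your cutoffs are nonlocal and of size $\alpha/|k|$, and you have not explained why they do not destroy the $\alpha^{1/3}|k|^{1/3}$ gain. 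The paper is explicit that one cannot perturb off the estimate of \cite{D}: one must reopen the multiplier computation and track the nonlocal term through it.

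What the paper actually does is structurally different from your plan in two respects. First, it performs the change of variable $r=e^t$ and multiplies by $e^{2t}$; this sends $T_k$ to the manifestly pseudodifferential operator $\gamma(t)(k^2+D_t^2)^{-1}\gamma(t)$ with $\gamma(t)=e^{2t}e^{-e^{2t}/8}$, and (crucially) preserves the self-/skew-adjoint splitting. Second, rather than citing \cite{D} as a black box, it rebuilds the multiplier $M_k=\chi_0\psi(\beta_k^{-1/3}D_t)\chi_0\mp i\beta_k^{-1/3}\chi_\pm^2$ and computes $2\mathrm{Re}\langle\SL_k u,M_k u\rangle$ directly, including the nonlocal piece. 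The point is that every occurrence of the nonlocal term in this computation carries either a commutator $[\langle D_k\rangle^{-2},\chi]$ or a factor $\langle D_k\rangle^{-2}$, each contributing a clean $k^{-2}$; the resulting bad term is $C\beta_k^{2/3}k^{-2}\|\gamma u\|^2$ (or a localized variant), which is dominated by the good term $c\beta_k^{2/3}\langle\rho(t,t_k)u,u\rangle$ once $k\ge k_0$. The four-case split on the location of the sign change $e^{t_k}$ (large, moderate, small, very small) is what replaces your vague ``two mechanisms must be combined.'' Your sketch contains the right intuitions but none of this machinery, and without it the borderline case does not close.
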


The resolvent estimate \eqref{eq.result2} gives information about the pseudospectrum of the family of operators $\{\h_\alpha\}_{\alpha\geq1}$.

\begin{defi}
For the operators $\{\h_\alpha\}_{\alpha\geq1}$ on $X_{k_0}$, we define the pseudospectrum of $\{\h_\alpha\}_{\alpha\geq1}$ as the complement of the set of $z\in\C$ such that 
$$\exists N_0\in\N,\quad \sup_{\alpha\geq1}\|(\h_\alpha-z)^{-1}\|_{{\mathcal L}(X_{k_0})}\alpha^{-N_0}<+\infty.$$
\end{defi}

\begin{coro}
The pseudospectrum of $\{\h_\alpha\}_{\alpha\geq1}$ is included in the set 
$$\big\{z\in\C;\ {\rm Re}z\geq C\alpha^{1/3}k_0^{1/3}\big\}.$$
\end{coro}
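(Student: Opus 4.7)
The plan is to combine the imaginary-axis estimate \eqref{eq.result2} of Theorem \ref{thm.result} with a Neumann-series perturbation argument in order to extend the resolvent bound into a strip about $i\R$ whose real width grows like $\alpha^{1/3}k_0^{1/3}$. For $z=\mu+i\lambda\in\C$, I factor
$$\h_\alpha-z=(\h_\alpha-i\lambda)\bigl(I-\mu(\h_\alpha-i\lambda)^{-1}\bigr).$$
By \eqref{eq.result2}, for $\alpha\ge\alpha_0$ the resolvent $(\h_\alpha-i\lambda)^{-1}$ has $X_{k_0}$-norm at most $C^{-1}\alpha^{-1/3}k_0^{-1/3}$, so the bracketed factor is invertible by Neumann series as soon as $|\mu|<C\alpha^{1/3}k_0^{1/3}$, and in the half-strip $|\mu|\le\tfrac{C}{2}\alpha^{1/3}k_0^{1/3}$ one obtains
$$\|(\h_\alpha-z)^{-1}\|_{\mathcal{L}(X_{k_0})}\le 2C^{-1}\alpha^{-1/3}k_0^{-1/3}.$$

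Next, I fix $z\in\C$ lying strictly to the left of the threshold of the corollary and split the parameter range. For $\alpha$ above a value $\alpha_1(\mathrm{Re}\,z)$ the displayed bound applies and even tends to $0$ as $\alpha\to\infty$. On the remaining bounded range $\alpha\in[1,\alpha_1(\mathrm{Re}\,z)]$, I invoke the numerical-range observation recorded just before Theorem \ref{thm.result}, namely $\Theta(\h_\alpha;X_{k_0})\subset\{\mathrm{Re}\ge k_0/2\}$: this immediately yields $\|(\h_\alpha-z)^{-1}\|_{\mathcal{L}(X_{k_0})}\le(k_0/2-\mathrm{Re}\,z)^{-1}$ whenever $\mathrm{Re}\,z<k_0/2$, and more generally a continuity-of-the-resolvent argument on the compact parameter interval $[1,\alpha_1(\mathrm{Re}\,z)]$ produces a finite uniform bound. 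Combining the two regimes gives $\sup_{\alpha\ge1}\alpha^{-N_0}\|(\h_\alpha-z)^{-1}\|_{\mathcal{L}(X_{k_0})}<\infty$ already with $N_0=0$, so $z$ lies in the complement of the pseudospectrum as defined, and the stated inclusion follows.

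The main obstacle is essentially cosmetic: one must tune the Neumann-series constant so the final threshold appears precisely as $C\alpha^{1/3}k_0^{1/3}$ rather than $\tfrac{C}{2}\alpha^{1/3}k_0^{1/3}$, and one must verify that the bounded-$\alpha$ tail does not pollute the global supremum over $\alpha\ge1$. The latter is harmless because the numerical-range bound is already uniform in $\alpha$ and depends only on the distance from $z$ to the half-plane $\{\mathrm{Re}\ge k_0/2\}$. No ingredient beyond Theorem \ref{thm.result} together with classical perturbation theory is needed.
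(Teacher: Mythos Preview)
Your argument is correct and follows essentially the same route as the paper: the paper uses the resolvent identity
\[
(\h_\alpha-\mu-i\lambda)^{-1}-(\h_\alpha-i\lambda)^{-1}=\mu(\h_\alpha-i\lambda)^{-1}(\h_\alpha-\mu-i\lambda)^{-1}
\]
together with \eqref{eq.result2} to obtain $\|(\h_\alpha-\mu-i\lambda)^{-1}\|\le \frac{C^{-1}\alpha^{-1/3}k_0^{-1/3}}{1-\kappa}$ for $0<\mu\le\kappa C\alpha^{1/3}k_0^{1/3}$ with $\kappa\in(0,1)$, and the numerical-range bound for ${\rm Re}\,z\le0$; this is exactly your Neumann-series factorization in different clothing, and the paper's parameter $\kappa$ plays the role of your factor $\tfrac12$. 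If anything you are slightly more careful than the paper in explicitly discussing the finite range $\alpha\in[1,\alpha_0)$, which the paper's proof leaves implicit.
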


Indeed, if ${\rm Re}z\leq0$, then for $\omega\in D\cap X_{k_0}$ with $\normd{\omega}=1$,
$$|\poscal{(\h_\alpha-z)\omega}{\omega}_{L^2(\R^2)}|\geq|{\rm Re}\poscal{\h_\alpha\omega}{\omega}_{L^2(\R^2)}-{\rm Re}z|\geq \frac{k_0}{2},$$
implying $\normt{(\h_\alpha-z)^{-1}}_{{\mathcal L}(X_{k_0})}\leq 2k_0^{-1}$.

Let $\kappa\in(0,1)$. For $z=\mu+i\lambda$ with $0<\mu\leq\kappa C\alpha^{1/3}k_0^{1/3}$ and $\lambda\in\R$, we infer from the resolvent formula
$$(\h_\alpha-\mu-i\lambda)^{-1}-(\h_\alpha-i\lambda)^{-1}=\mu(\h_\alpha-i\lambda)^{-1}(\h_\alpha-\mu-i\lambda)^{-1}$$
and the resolvent estimate \eqref{eq.result2} that 
$$\normt{(\h_\alpha-\mu-i\lambda)^{-1}}_{{\mathcal L}(X_{k_0})}\leq \frac{\normt{(\h_\alpha-i\lambda)^{-1}}_{{\mathcal L}(X_{k_0})}}{1-\mu \normt{(\h_\alpha-i\lambda)^{-1}}_{{\mathcal L}(X_{k_0})}}\leq \frac{C^{-1}\alpha^{-1/3}k_0^{-1/3}}{1-\kappa}.$$
As a result, the set $\{z\in\C;\ {\rm Re}z< C\alpha^{1/3}k_0^{1/3}\}$ is included in the complement of the pseudospectrum of $\{\h_\alpha\}_{\alpha\geq1}$, so that the corollary is proved.

\subsection{Comments}
\subsubsection{The nonlocal term}
The term
$$\frac{\alpha}{2}G^{1/2}x\cdot\big(K_{BS}\ast(G^{1/2}\omega)\big)$$
is an integral operator which is non-local and skew-adjoint. This term should be carefully treated as it has a large coefficient $\alpha$.

\subsubsection{A weight}

We shall reduce the two-dimensional operator $\h_\alpha-i\lambda$ to a family of one-dimensional operators acting on the positive-half real line $\R_+$ by using polar coordinates and expanding the angular variable $\theta$ in Fourier series, indexed by the Fourier mode parameter $k\in\Z$. Then we transform the problem onto the whole real line $\R$ by making a change of variable $r=e^t$ and multiplying by a weight $e^{2t}$. After these transformations, the properties of self-adjointness and skew-adjointness are preserved (see Section \ref{sec.reduction}), and the non-local term turns out to be a skew-adjoint pseudodifferential operator with ${\mathcal L}(L^2(\R;dt))$-norm bounded above by $\alpha |k|^{-1}$.
The discussion is divided into different cases according to a change-of-sign situation. 

\subsubsection{Multiplier method}

The proof relies on a classical multiplier method.
For the non-trivial cases where the change-of-sign takes place (see Section \ref{sec.nontrivialcase}, \ref{sec.nontrivial.c}), we shall construct a multiplier bounded on $L^2(\R;dt)$, which is a pseudodifferential operator associated to a H\"ormander-type metric.
The non-local term will be treated as a perturbation and will be absorbed by the main term letting $|k|\geq k_0$, with $k_0$ a constant independent of the circulation parameter $\alpha$.

\subsubsection{The value of $k_0$}

Given $\epsilon_0,\epsilon_1\in(0,1)$, we shall discuss 4 different cases given in \eqref{4cases}. Then $k_0$ can be expressed as a function of $(\epsilon_0,\epsilon_1)$. For example, if we take $\epsilon_0\simeq0.462$, $\epsilon_1\simeq0.426$, then Theorem \ref{thm.result} holds with $k_0=84$, see \cite{D3}.
(In fact, we can obtain $k_0=51$ if we do some improvements.)

\section{The proof}

\subsection{First reductions}\label{sec.reduction}
The operator $\h_\alpha$ in \eqref{eq.h1} is invariant under rotations with respect to the origin in $\R^2$. We can reduce the problem to a family of one-dimensional operators by using polar coordinates and expanding the angular variable $\theta$ in Fourier series.

\subsubsection{Polar coordinates}
We can write for $\omega\in L^2(\R^2)$ and $v=K_{BS}\ast \omega$ given by \eqref{eq.BS} as
\begin{align*}
\omega(r\cos\theta,r\sin\theta)&=\sum_{k\in\Z}\omega_k(r)e^{ik\theta},\\
v(r\cos\theta,r\sin\theta)&=\sum_{k\in\Z}\Big(\frac{u_k(r)}{r}{\bf e_r}+\frac{w_k(r)}{r}{\bf e_\theta}\Big)e^{ik\theta},
\end{align*}
where ${\bf e_r}=(\cos\theta,\sin\theta)$ and ${\bf e_\theta}=(-\sin\theta,\cos\theta)$. The relations $\p_1v_1+\p_2v_2=0$, $\p_1v_2-\p_2v_1=\omega$ become
$$u_k'+\frac{ik}{r}w_k=0,\quad w_k'-\frac{ik}{r}u_k=r\omega_k,$$
so that $-\Delta_ku_k=ik\omega_k$, where
$$-\Delta_k=-\p_r^2-\frac{1}{r}\p_r+\frac{k^2}{r^2}.$$
If $k\neq0$, the Poisson equation $-\Delta_k\Omega=f$ has the explicit solution $\Omega={\mathcal K}_k[f]$, where
\begin{equation}\label{eq.K}
{\mathcal K}_k[f](r)=\frac{1}{2|k|}\int_0^{+\infty}\Big(\big(\frac{r}{s}\big)^{|k|}H(r)H(s-r)+\big(\frac{s}{r}\big)^{|k|}H(s)H(r-s)\Big)f(s)sds,
\end{equation}
where $H(r)$ is the Heaviside function.
We thus have
$$u_k=ik{\mathcal K}_k[\omega_k]\quad\text{and}\quad w_k=-r{\mathcal K}_k[\omega_k]'$$ 
if $k\neq0$. For $k=0$, we find $u_0=0$ and $w_0'=r\omega_0$, hence $w_0(r)=\int_0^rs\omega(s)ds$.

By using the following notations:
\begin{equation}\label{eq.sigma}
\sigma(r)=\frac{1-e^{-r^2/4}}{r^2/4},\quad g(r)=e^{-r^2/8},\quad\text{for } r>0,
\end{equation}
and observing that $v^G=\frac{1}{8\pi}r\sigma(r) {\bf e_\theta}$, we rewrite the skew-adjoint part of $\h_\alpha$ in polar coordinates as 
$$\alpha v^G\cdot \nabla\omega=\sum_{k\neq0}\frac{i\alpha k}{8\pi}\sigma(r)\omega_k(r)e^{ik\theta},$$
$$\frac{\alpha}{2}G^{1/2}x\cdot\big(K_{BS}\ast(G^{1/2}\omega)\big)=\sum_{k\neq0}\frac{i\alpha k}{8\pi}g(r){\mathcal K}_k[g\omega_k](r)e^{ik\theta}.$$
Thus we find that for $\h_\alpha$ given by \eqref{eq.h1}, $\lambda\in\R$ and for $\omega=\sum_{k\in\Z^\ast}\omega_k(r)e^{ik\theta}$,
\begin{equation}\label{eq.h.alpha}
\big((\h_\alpha-i\lambda)\omega\big)(r\cos\theta,r\sin\theta)=\sum_{k\in\Z^\ast}(\h_{\alpha,k,\lambda}\omega_k)(r)e^{ik\theta},
\end{equation}
where $\h_{\alpha,k,\lambda}$ acts on $L^2(\R_+;rdr)$ and is given by
\begin{equation}\label{eq.hk1}
\h_{\alpha,k,\lambda}v=-\partial_r^2v-\frac{1}{r}\partial_rv+\frac{k^2}{r^2}v+\frac{r^2}{16}v-\frac{1}{2}v+\frac{ik\alpha}{8\pi}\Big(\sigma(r)v-g{\mathcal K}_k[gv]\Big)-i\lambda v.
\end{equation}
Introducing two new notations
\begin{equation}\label{eq.beta}
\beta_k=\frac{\alpha k}{8\pi},\quad \lambda=\beta_k\nu_k,\quad \nu_k\in\R,
\end{equation}
we are led to study the resolvent of the one-dimensional operator $\h_{\alpha,k,\lambda}$ on $L^2(\R_+;rdr)$ for $|\beta_k|\to+\infty$, where (we omit the indices $\alpha,\lambda$ in $\h_{\alpha,k,\lambda}$)
\begin{align}
\notag\h_{k}v&=\underbrace{-\partial_r^2v-\frac{1}{r}\partial_rv+\frac{k^2}{r^2}v+\frac{r^2}{16}v-\frac{1}{2}v}_{\text{self-adjoint and non-negative on }L^2(\R_+;rdr)}\\
\label{eq.hk}&\qquad\qquad\underbrace{+i\beta_k\big(\sigma(r)-\nu_k\big)v-i\beta_kg{\mathcal K}_k[gv]}_{\text{skew-adjoint on }L^2(\R_+;rdr)}.
\end{align}
Note that the non-local term is transformed to $i\beta_kg{\mathcal K}_kg$ with ${\mathcal K}_k$ given by \eqref{eq.K}. Moreover, $C_0^\infty((0,+\infty))$ is a core for the closed operator $\h_k$ with domain
$$D(\h_k)=\big\{v\in L^2(\R_+;rdr);\ \p_r^2v,\ \frac{1}{r}\p_rv,\ \frac{1}{r^2}v,\ r^2v\in L^2(\R_+;rdr)\big\},\quad\text{if }|k|\geq2,$$
$$\text{and}\quad D(\h_k)=\big\{v\in L^2(\R_+;rdr);\ \p_r^2v,\ \p_r\big(\frac{v}{r}\big),\ r^2v\in L^2(\R_+;rdr)\big\},\quad \text{if }|k|=1.$$

\subsubsection{Change of variables}
We wish to transform the operator $\h_k$ in \eqref{eq.hk} acting on the positive half-line into an operator acting on the whole real line, by making the change of variables $r=e^t$.
A simple but key observation is
\begin{lem}\label{lem.change}
For $v\in L^2(\R_+;rdr)$, define $u(t)=v(e^t)$. Then
\begin{equation}\label{eq.change1}
\|e^tu\|_{L^2(\R;dt)}^2=\int_\R|e^tu(t)|^2dt=\int_0^{+\infty} |v(r)|^2rdr=\|v\|_{L^2(\R_+;rdr)}^2.
\end{equation}
Moreover, for $v\in C_0^\infty((0,+\infty))$, multiplying $(\h_kv)(e^t)$ by the weight $e^{2t}$, we have 
\begin{equation}\label{eq.change2}
e^{2t}(\h_kv)(e^t)=(\widetilde\SL_ku)(t),\quad \text{for }u(t)=v(e^t),
\end{equation}
where
\begin{align}
\notag \widetilde\SL_k=&-\partial_t^2+k^2+\frac{1}{16}e^{4t}-\frac{1}{2}e^{2t}\\
\label{eq.tilde.lk}&\qquad+i\beta_ke^{2t}\big(\sigma(e^t)-\nu_k\big)-i\beta_ke^{2t}g(e^t) (k^2+D_t^2)^{-1} e^{2t}g(e^t).
\end{align}
\end{lem}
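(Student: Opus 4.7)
The lemma is essentially a bookkeeping verification under the diffeomorphism $r = e^t$, so I plan a direct term-by-term computation. The first identity \eqref{eq.change1} is immediate: the Jacobian $r\,dr = e^{2t}\,dt$ gives
\begin{equation*}
\int_0^{+\infty}|v(r)|^2\,r\,dr \;=\; \int_\R |v(e^t)|^2 e^{2t}\,dt \;=\; \int_\R |e^t u(t)|^2\,dt.
\end{equation*}

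For the operator identity \eqref{eq.change2} I transport each term of $\h_k$ separately. The chain rule $\partial_r = e^{-t}\partial_t$ yields $-\partial_r^2 - r^{-1}\partial_r = -e^{-2t}\partial_t^2$, so multiplication of the self-adjoint part of $\h_k$ by $e^{2t}$ turns $-\partial_r^2 v - r^{-1}\partial_r v + k^2 r^{-2} v + (r^2/16)v - v/2$ into $-\partial_t^2 u + k^2 u + (1/16)e^{4t}u - (1/2)e^{2t}u$. The local piece of the skew-adjoint term contributes $i\beta_k e^{2t}(\sigma(e^t) - \nu_k)u$. These account for every summand of $\widetilde\SL_k$ except the last, nonlocal one.

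The heart of the computation, and the only step that is not purely mechanical, is to identify the nonlocal operator $\mathcal{K}_k$ with a resolvent on the line. By the defining property behind formula \eqref{eq.K}, for $f \in C_0^\infty((0,+\infty))$ the function $\Omega = \mathcal{K}_k[f]$ is the unique classical solution of $-\Delta_k\Omega = f$ with $\Omega(r) = O(r^{|k|})$ as $r\to 0^+$ and $\Omega(r) = O(r^{-|k|})$ as $r\to+\infty$. Under $r = e^t$ one has $-\Delta_k = e^{-2t}(D_t^2 + k^2)$, so the equation becomes $(D_t^2 + k^2)\Omega(e^\cdot) = e^{2\cdot}f(e^\cdot)$. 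Since $k \neq 0$, the Fourier multiplier $(k^2 + D_t^2)^{-1}$ is bounded on $L^2(\R)$, and the decay of $\Omega(e^t)$ like $e^{-|k||t|}$ at $\pm\infty$ places $\Omega(e^\cdot)$ in $L^2(\R)$; by uniqueness,
\begin{equation*}
\mathcal{K}_k[f](e^t) \;=\; (k^2 + D_t^2)^{-1}\bigl(e^{2\cdot}f(e^\cdot)\bigr)(t).
\end{equation*}
Taking $f = gv$ and multiplying by $-i\beta_k e^{2t}g(e^t)$ produces exactly the last term of $\widetilde\SL_k$, completing the identification. The only delicate bookkeeping point here — and the reason I flag this as the one step to handle with care — is justifying that $\mathcal{K}_k[f](e^\cdot) \in L^2(\R)$ after conjugation, which is granted by the explicit kernel decay built into \eqref{eq.K}.
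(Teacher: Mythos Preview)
Your argument is correct, and the bookkeeping for the local terms matches the paper's exactly. The treatment of the nonlocal term, however, follows a genuinely different route. The paper works directly with the explicit kernel \eqref{eq.K}: after the substitution $r=e^t$, $s=e^s$ it obtains
\[
e^{2t}\bigl(g\,\mathcal{K}_k[gv]\bigr)(e^t)=\frac{1}{2|k|}\int_\R e^{2t}g(e^t)\,e^{-|k||t-s|}\,e^{2s}g(e^s)u(s)\,ds,
\]
and then identifies the convolution kernel $\frac{1}{2|k|}e^{-|k||t|}$ with the Fourier multiplier $(k^2+\tau^2)^{-1}$ via an elementary Fourier computation (this is exactly Lemma~\ref{lem.Dk2}). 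You instead invoke the PDE characterization of $\mathcal{K}_k$ as the Green's operator for $-\Delta_k$, transport the equation to $(D_t^2+k^2)\Omega(e^\cdot)=e^{2\cdot}f(e^\cdot)$, and conclude by uniqueness of the $L^2(\R)$ solution. Both arguments are sound; yours is more conceptual and avoids the explicit Fourier integral, while the paper's direct kernel computation has the side benefit of simultaneously establishing the convolution representation of $(k^2+D_t^2)^{-1}$, which is used again later (e.g.\ in Lemma~\ref{lem.Dk3} and in the double-commutator estimates for $B_2^\pm$).
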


\begin{proof}
Indeed, we have $$r^2(\p_r^2+r^{-1}\p_r)=(r\p_r)^2=\p_t^2\quad\text{for }r=e^t.$$
On the other hand, by the definition \eqref{eq.K} of ${\mathcal K}_k$, we have for $v\in C_0^\infty((0,+\infty))$,
\begin{align*}
&e^{2t}\big(g{\mathcal K}_k[gv]\big)(e^t)=e^{2t}g(e^t){\mathcal K}_k[gv](e^t)\\
&=\frac{1}{2|k|}\int_0^{+\infty} e^{2t}g(e^t)\Bigl[\Big(\frac{e^t}{s}\Big)^{|k|}H(e^t)H(s-e^t)+\Big(\frac{s}{e^t}\Big)^{|k|}H(s)H(e^t-s)\Bigr]g(s)v(s)sds\\
&=\frac{1}{2|k|}\int_\R e^{2t}g(e^t)\Bigl[\Big(\frac{e^t}{e^s}\Big)^{|k|}H(e^s-e^t)+\Big(\frac{e^s}{e^t}\Big)^{|k|}H(e^t-e^s)\Bigr]g(e^s)v(e^s)e^{2s}ds\\
&=\frac{1}{2|k|}\int_\R e^{2t}g(e^t)\Bigl[e^{-|k|(s-t)}H(s-t)+e^{-|k|(t-s)}H(t-s)\Bigr]e^{2s}g(e^s)v(e^s)ds\\
&=\frac{1}{2|k|}\int_\R e^{2t}g(e^t)e^{-|k||t-s|}e^{2s}g(e^s)u(s)ds.
\end{align*}
For $k\neq0$, we have
$$ \frac{1}{2|k|}\int_\R e^{-|k||t|}e^{i t\tau}dt=\frac{1}{k^2+\tau^2},$$
(see Lemma \ref{lem.Dk2}) so that the non-local term $g{\mathcal K}_kg$ becomes
$$e^{2t}\big(g{\mathcal K}_k[gv]\big)(e^t)=\big(e^{2t}g(e^t) (k^2+D_t^2)^{-1} e^{2t}g(e^t)u\big)(t),\quad\text{for } u(t)=v(e^t),$$
which is a self-adjoint, positive (non-local) pseudodifferential operator on $L^2(\R;dt)$. 
The proof of the lemma is complete.
\end{proof}

When $\widetilde\SL_k$ given by \eqref{eq.tilde.lk} is viewed as an operator on $L^2(\R;dt)$, we see that
\begin{align}
\notag \widetilde\SL_k=&\underbrace{-\partial_t^2+k^2+\frac{1}{16}e^{4t}-\frac{1}{2}e^{2t}}_{\text{self-adjoint and non-negative on }L^2(\R;dt)}\\
\notag&\qquad\qquad+\underbrace{i\beta_ke^{2t}\big(\sigma(e^t)-\nu_k\big)}_{\text{skew-adjoint on }L^2(\R;dt)}-\underbrace{i\beta_ke^{2t}g(e^t) (k^2+D_t^2)^{-1} e^{2t}g(e^t)}_{\text{skew-adjoint on }L^2(\R;dt)}.
\end{align}
After the change of variables $r=e^t$ and the multiplication by the weight $e^{2t}$, the self-adjoint (resp. skew-adjoint) part of $\h_k$ in \eqref{eq.hk} does not lose its self-adjointness (resp. skew-adjointness), and in particular, the non-local term $i\beta_kg{\mathcal K}_kg$ stays skew-adjoint.
Moreover, the power 2 in the weight is the only power to keep these properties unchanged.

In view of \eqref{eq.change1} and \eqref{eq.change2} in Lemma \ref{lem.change}, the problem is reduced to prove estimates for the operator $\widetilde\SL_k$ in \eqref{eq.tilde.lk} of type
\begin{equation}\label{eq.u}
\|e^{-t}\widetilde\SL_ku\|_{L^2(\R;dt)}\geq C|\beta_k|^a\|e^tu\|_{L^2(\R;dt)}
\end{equation}
for some $a>0$, which correspond to the estimates for the operator $\h_k$ given in \eqref{eq.hk}
\begin{equation}\label{eq.v}
\|\h_kv\|_{L^2(\R_+;rdr)}\geq C|\beta_k|^a\|v\|_{L^2(\R_+;rdr)},
\end{equation}
where $u(t)=v(e^t)$, since we have exactly
$$\|e^{-t}\widetilde\SL_ku\|_{L^2(\R;dt)}=\|\h_kv\|_{L^2(\R_+;rdr)},\quad \|e^tu\|_{L^2(\R;dt)}=\|v\|_{L^2(\R_+;rdr)}.$$
Furthermore, we need only to prove estimates \eqref{eq.u} for $u\in C_0^\infty(\R)$, since it is enough to get \eqref{eq.v} for $v\in C_0^\infty((0,+\infty))$.

As in \cite{D}, we divide our discussion into different cases, according to the change-of-sign situation of $\sigma(e^t)-\nu_k$, where the function $\sigma$ is given in \eqref{eq.sigma}. Note that $\sigma(e^t)$ is a decreasing function of the variable $t$ and has range $(0,1)$. When $\sigma(e^t)-\nu_k$ does not change sign, it is easy to deal with by using the multipliers ${\rm Id}$, $\pm i{\rm Id}$ (see Section \ref{sec.easycase}). If $\sigma(e^t)-\nu_k$ changes sign at one point, it is more complicated (see Section \ref{sec.nontrivialcase}, \ref{sec.nontrivial.c}). In this case, we will construct a multiplier well-adapted to this change-of-sign situation, which is a pseudodifferential operator depending on a H\"ormander metric on the phase space.
Compared with the method in \cite{D}, the multiplier that we shall construct is a global one, because of the existence of the non-local term, which possesses a large coefficient and would produce a commutator of size $|\beta_k|$ if we just used a partition of unity on $\R_t$ as done in \cite{D}.

\subsubsection{Notations}

In Section \ref{sec.easycase}, \ref{sec.nontrivialcase} and \ref{sec.nontrivial.c}, we shall always assume that $k\geq1$ hence $\beta_k>0$, and we denote by $\|\cdot\|$, $\poscal{\cdot}{\cdot}$ the $L^2(\R;dt)$-norm, inner-product respectively. 
We shall also be able to neglect the term $-\frac{1}{2}e^{2t}$ in the real part of $\widetilde\SL_k$ and by introducing two notations,
\begin{equation}\label{eq.Dk}
\Dk=(D_t^2+k^2)^{-1},\quad \gamma(t)=e^{2t}g(e^t)=e^{2t}e^{-e^{2t}/8},
\end{equation}
we shall study
\begin{align}\label{eq.lk}
\SL_k&=D_t^2+k^2+\frac{1}{16}e^{4t}+i\beta_ke^{2t}\big(\sigma(e^t)-\nu_k\big)-i\beta_k\gamma(t)\Dk\gamma(t).
\end{align}
In fact, as soon as we prove \eqref{eq.u} for $\SL_k$ in \eqref{eq.lk} with $a>0$, we have for the operator $\widetilde\SL_k$ given in \eqref{eq.tilde.lk}
\begin{align*}
\|e^{-t}\widetilde\SL_ku\|_{L^2(\R;dt)}&=\|e^{-t}\big(\SL_k-\frac{1}{2}e^{2t}\big)u\|_{L^2(\R;dt)}\\
&\geq C\beta_k^a\|e^tu\|_{L^2(\R;dt)}-\frac{1}{2}\|e^tu\|_{L^2(\R;dt)},
\end{align*}
so that it suffices to let $\alpha$ large enough since $k\geq1$, $\beta_k\geq\alpha/8\pi$.

\medskip

We present in Appendix \ref{sec.ineq} some inequalities concerning the functions $\sigma$ and $g$ given in \eqref{eq.sigma} that will be used in the proof.
We have for all $u\in C_0^\infty(\R)$,
\begin{equation}\label{eq.real}
{\rm Re}\poscal{\SL_ku}{u}_{L^2(\R;dt)}=\poscal{\big(D_t^2+k^2+\frac{1}{16}e^{4t}\big)u}{u}_{L^2(\R;dt)},
\end{equation}
\begin{equation}\label{eq.gkg}
0\leq\poscal{\gamma\Dk\gamma u}{u}_{L^2(\R;dt)}\leq k^{-2}\normt{\gamma u}_{L^2(\R;dt)}^2,\quad\text{\small by Lemma \ref{lem.Dk1},}
\end{equation}
where $\SL_k$ is given in \eqref{eq.lk} and $\gamma,\Dk$ are given in \eqref{eq.Dk}.

\subsection{Easy cases}\label{sec.easycase}
In this section, we study the cases where $\sigma(e^t)-\nu_k$ does not change sign, that is $\nu_k\geq1$ or $\nu_k\leq0$. %

\begin{lem}\label{lem.easy1}
Suppose $\nu_k\geq1$. There exists $C>0$ such that for all $k\geq1$, $\alpha\geq8\pi$ and for $u\in C_0^\infty(\R)$,
\begin{equation}\label{est.easy1}
\normt{e^{-t}\SL_ku}\geq C\beta_k^{1/2}\normt{e^tu},
\end{equation}
where $\SL_k$ is given in \eqref{eq.lk} and $\beta_k$ is given in \eqref{eq.beta}.
\end{lem}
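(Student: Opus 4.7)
Since we are in the case $\nu_k \geq 1$ and $\sigma(e^t) \in (0,1)$, the scalar factor $\sigma(e^t) - \nu_k$ is non-positive everywhere, so no change of sign occurs. The plan is to test $\SL_k u$ against the multiplier $(1-i)u$: using $\poscal{\SL_k u}{(1-i)u} = \poscal{e^{-t}\SL_k u}{e^t(1-i)u}$ and Cauchy-Schwarz,
$$\mathrm{Re}\poscal{\SL_k u}{(1-i)u} = \mathrm{Re}\poscal{\SL_k u}{u} - \mathrm{Im}\poscal{\SL_k u}{u} \leq \sqrt{2}\,\normt{e^{-t}\SL_k u}\normt{e^t u},$$
so the problem reduces to lower-bounding the left-hand side by $\beta_k^{1/2}\normt{e^t u}^2$ up to an absolute constant.

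By \eqref{eq.real}, $\mathrm{Re}\poscal{\SL_k u}{u} \geq k^2 \normt{u}^2$. For the imaginary part, both contributions have the ``good'' sign: since $\gamma\Dk\gamma$ is a positive operator and $\nu_k - \sigma(e^t) \geq 1 - \sigma(e^t) \geq 0$,
$$-\mathrm{Im}\poscal{\SL_k u}{u} = \beta_k \poscal{e^{2t}(\nu_k - \sigma(e^t))u}{u} + \beta_k \poscal{\gamma\Dk\gamma u}{u} \geq \beta_k\int_\R e^{2t}(1-\sigma(e^t))|u|^2\,dt,$$
so in particular the a priori dangerous non-local term, which carries the large coefficient $\beta_k$, is simply discarded with its favorable sign.

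To produce the factor $\beta_k^{1/2}$ I split $\normt{e^t u}^2 = \int_\R e^{2t}|u|^2\,dt$ at $t = -T$ with the balanced choice $T := \tfrac14 \log \beta_k$, so that $e^{-2T} = \beta_k^{-1/2}$. On $(-\infty,-T]$ the weight is small: $\int_{-\infty}^{-T}e^{2t}|u|^2\,dt \leq \beta_k^{-1/2}\normt{u}^2 \leq \beta_k^{-1/2}k^{-2}\mathrm{Re}\poscal{\SL_k u}{u}$. On $[-T,+\infty)$ I exploit the fact that $e^{-T} = \beta_k^{-1/4} \leq 1$ (since $\alpha\geq 8\pi$, $k\geq 1$, hence $\beta_k\geq 1$) together with the monotonicity of $\sigma$: the Taylor-expansion bound $1-\sigma(r) \geq r^2/16$ on $(0,1]$ from Appendix \ref{sec.ineq} then gives $1-\sigma(e^t) \geq \beta_k^{-1/2}/16$ on that half-line, so $\int_{-T}^{+\infty}e^{2t}|u|^2\,dt \leq 16\beta_k^{-1/2}(-\mathrm{Im}\poscal{\SL_k u}{u})$. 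Adding the two pieces,
$$\beta_k^{1/2}\normt{e^t u}^2 \leq 16\bigl(\mathrm{Re}\poscal{\SL_k u}{u} - \mathrm{Im}\poscal{\SL_k u}{u}\bigr) \leq 16\sqrt{2}\,\normt{e^{-t}\SL_k u}\normt{e^t u},$$
which is exactly \eqref{est.easy1} with $C = (16\sqrt{2})^{-1}$.

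The only non-routine choice is the splitting point $T = \tfrac14\log\beta_k$, tuned so that both the trivial weight $e^{-2T}$ and the Taylor lower bound $1-\sigma(e^{-T}) \gtrsim e^{-2T}$ become comparable to $\beta_k^{-1/2}$, providing exactly the $\beta_k^{1/2}$ gain we need; everything else is routine.
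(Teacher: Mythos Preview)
Your proof is correct and follows the same architecture as the paper: test $\SL_k u$ against the multiplier $(1-i)u$, discard the non-local term $\beta_k\poscal{\gamma\Dk\gamma u}{u}$ using its favorable sign, and exploit $\nu_k\geq1$ to lower-bound the imaginary part by $\beta_k\int e^{2t}(1-\sigma(e^t))|u|^2$. The only difference is in the final step: the paper writes
\[
{\rm Re}\poscal{\SL_ku}{(1-i)u}\geq\poscal{\big(k^2e^{-2t}+\beta_k(1-\sigma(e^t))\big)e^{2t}u}{u}
\]
and then invokes the pointwise inequality $k^2r^{-2}+\beta_k(1-\sigma(r))\geq e^{-1}\beta_k^{1/2}$ from the appendix (the second line of \eqref{sigma.2}), which immediately gives $C\beta_k^{1/2}\|e^tu\|^2$. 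You instead keep only $k^2\|u\|^2$ from the real part and compensate by splitting the integral at $e^{-T}=\beta_k^{-1/4}$; this is in fact the same balancing mechanism that underlies the proof of \eqref{sigma.2}, carried out by hand at the level of integrals rather than packaged as a pointwise lemma. Both arguments are short and yield the same $\beta_k^{1/2}$ gain; the paper's version is marginally cleaner because it reuses an inequality already recorded for other cases, while yours is self-contained.
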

\begin{proof}
If $\nu_k\geq1$, then $\sigma(e^t)-\nu_k$ is non-positive. Using the multiplier $-i{\rm Id}$ and by \eqref{eq.gkg}, we have 
\begin{align}
\notag{\rm Re}\poscal{\SL_ku}{-iu}&=\beta_k\poscal{e^{2t}\big(\nu_k-\sigma(e^t)\big)u}{u}+\beta_k\poscal{\gamma\Dk\gamma u}{u}\\
\label{eq.easy1}&\geq\beta_k\poscal{e^{2t}\big(1-\sigma(e^t)\big)u}{u}.
\end{align}
Adding \eqref{eq.real}, \eqref{eq.easy1} together, we obtain
\begin{equation*}
{\rm Re}\poscal{\SL_ku}{(1-i)u}\geq \poscal{\Big(k^2e^{-2t}+\beta_k\big(1-\sigma(e^t)\big)\Big)e^{2t}u}{u}.
\end{equation*}
Then using the second inequality in \eqref{sigma.2}, we get
\begin{equation*}
{\rm Re}\poscal{e^{-t}\SL_ku}{e^t(1-i)u}\geq C\beta_k^{1/2}\poscal{e^{2t}u}{u}=C\beta_k^{1/2}\normt{e^tu}^2.
\end{equation*}
By Cauchy-Schwarz inequality, the estimate \eqref{est.easy1} is proved.
\end{proof}

\begin{lem}\label{lem.easy2}
Suppose $\nu_k\leq0$. There exists $C>0$ such that for all $k\geq2$, $\alpha\geq8\pi$ and for $u\in C_0^\infty(\R)$,
\begin{equation}\label{est.easy2}
\normt{e^{-t}\SL_ku}\geq C\beta_k^{1/2}\normt{e^tu},
\end{equation}
where $\SL_k$ is given in \eqref{eq.lk} and $\beta_k$ in \eqref{eq.beta}.
\end{lem}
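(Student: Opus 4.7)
The strategy is to mirror the proof of Lemma \ref{lem.easy1} using the multiplier $+iu$ in place of $-iu$, since under $\nu_k\le 0$ one has $\sigma(e^t)-\nu_k\ge\sigma(e^t)\ge 0$. The new difficulty is that with this sign the non-local term $-i\beta_k\gamma\Dk\gamma$ now contributes an \emph{unfavorable} (negative) term to ${\rm Re}\poscal{\SL_ku}{iu}$, whereas in Lemma \ref{lem.easy1} it helped. This is precisely why the stronger hypothesis $k\ge 2$ is imposed here: the non-local piece must be absorbed by a combination of the real part of $\SL_k$ and the positive part $\beta_ke^{2t}\sigma(e^t)$ of the imaginary part.

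Concretely, test against $(1+i)u$. Using \eqref{eq.real}, a direct computation of ${\rm Re}\poscal{\SL_ku}{iu}={\rm Im}\poscal{\SL_ku}{u}$, and the bound \eqref{eq.gkg}, one obtains
\[
{\rm Re}\poscal{\SL_ku}{(1+i)u}\ge \poscal{D_t^2u}{u}+\int\Bigl[k^2+\tfrac{1}{16}e^{4t}+\beta_ke^{2t}\sigma(e^t)-\beta_kk^{-2}\gamma(t)^2\Bigr]|u|^2\,dt.
\]
Dropping $\poscal{D_t^2u}{u}\ge 0$, factoring $e^{2t}$ out of the bracket, and using $\gamma(t)^2=e^{4t}e^{-e^{2t}/4}$, the lemma reduces to the pointwise inequality
\[
F(t):=k^2e^{-2t}+\tfrac{1}{16}e^{2t}+\beta_k\sigma(e^t)-\beta_kk^{-2}e^{2t}e^{-e^{2t}/4}\ge C\beta_k^{1/2},
\]
uniform in $t\in\R$ for all $k\ge 2$ and all sufficiently large $\beta_k$. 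Cauchy--Schwarz then delivers \eqref{est.easy2} exactly as at the end of the proof of Lemma \ref{lem.easy1}.

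For the pointwise bound, substitute $s=e^{2t}$ and rewrite $\sigma(e^t)-k^{-2}e^{2t}e^{-e^{2t}/4}=\psi_k(s)/s$ with $\psi_k(s):=4-(4+s^2/k^2)e^{-s/4}$. The algebraic windfall is the identity
\[
\psi_2'(s)=e^{-s/4}(1-s/4)^2\ge 0,
\]
which shows that $\psi_2$ is nondecreasing with $\psi_2(0)=0$, hence $\psi_k\ge\psi_2\ge 0$ for every $k\ge 2$; this is exactly where the threshold $k\ge 2$ enters. Fixing $s_*$ with $\psi_2(s_*)=2$, split into two cases. For $s\le s_*$, since $\psi_2(s)/s$ is continuous and positive on $(0,s_*]$ with limit $\psi_2'(0)=1$ at $0$, it is bounded below by some $c_0>0$, so $\beta_k\sigma(e^t)-\beta_kk^{-2}e^{2t}e^{-e^{2t}/4}\ge c_0\beta_k\ge c_0\sqrt{\beta_k}$. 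For $s\ge s_*$, $\psi_k(s)\ge\psi_2(s)\ge 2$, and AM--GM on $\tfrac{s}{16}$ and $\beta_k\psi_k(s)/s$ gives $\tfrac{1}{16}e^{2t}+\beta_k[\sigma(e^t)-k^{-2}e^{2t}e^{-e^{2t}/4}]\ge\sqrt{\beta_k/2}$. Either way $F(t)\ge C\sqrt{\beta_k}$.

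The principal obstacle is the verification of the pointwise inequality in the transition regime where $\beta_k\sigma(e^t)$ and $\beta_kk^{-2}\gamma(t)^2/e^{2t}$ are comparable; the non-negativity of $\psi_2$, resting on the exact identity $\psi_2'=e^{-s/4}(1-s/4)^2$, is the crucial algebraic input, and the hypothesis $k\ge 2$ is essentially sharp for this multiplier method to close.
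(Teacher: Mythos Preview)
Your argument is correct and follows the same multiplier strategy as the paper, but the way you absorb the non-local term is genuinely different. The paper invokes the appendix inequality \eqref{sigma.1}, namely $\delta r^2g(r)^2\le\sigma(r)$ with $\delta\simeq\tfrac14\times 1.544$, to bound $k^{-2}\gamma(t)^2\le(4\delta)^{-1}e^{2t}\sigma(e^t)$; since $4\delta>1$ this leaves a positive fraction of $\beta_ke^{2t}\sigma(e^t)$ intact, and then the first inequality in \eqref{sigma.2} finishes. You instead package the difference $\sigma(e^t)-k^{-2}e^{2t}e^{-e^{2t}/4}$ as $\psi_k(s)/s$ and exploit the exact identity $\psi_2'(s)=e^{-s/4}(1-s/4)^2$ to get $\psi_k\ge\psi_2\ge 0$ for $k\ge 2$, then close by a case split and AM--GM. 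Your route is self-contained and makes the role of the threshold $k=2$ fully transparent through the perfect square; the paper's route is shorter because it recycles two estimates already proved in the appendix. One minor point: where you write ``all sufficiently large $\beta_k$'' you actually only need $\beta_k\ge 1$, which is already guaranteed by $\alpha\ge 8\pi$ and $k\ge 2$, so the statement holds exactly as claimed.
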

\begin{proof}
If $\nu_k\leq0$, then $\sigma(e^t)-\nu_k$ is non-negative. Using the multiplier $i{\rm Id}$ and by \eqref{eq.gkg}, we have 
\begin{align*}
{\rm Re}\poscal{\SL_ku}{iu}&=\beta_k\poscal{e^{2t}\big(\sigma(e^t)-\nu_k\big)u}{u}-\beta_k\poscal{\gamma\Dk\gamma u}{u}\\
&\geq\beta_k\Big(\poscal{e^{2t}\sigma(e^t)u}{u}-k^{-2}\normt{e^{2t}g(e^t)u}^2\Big).
\end{align*}
Using \eqref{sigma.1}, we get for $k\geq2$,
\begin{equation}\label{eq.easy2}
{\rm Re}\poscal{\SL_ku}{iu}\geq \big(1-(4\delta)^{-1}\big)\beta_k\poscal{e^{2t}\sigma(e^t)u}{u},
\end{equation}
with $1-(4\delta)^{-1}>0$.
Adding \eqref{eq.real}, \eqref{eq.easy2} together we obtain
\begin{equation*}
{\rm Re}\poscal{\SL_ku}{(1+i)u}\geq \poscal{\Big(\frac{1}{16}e^{2t}+\big(1-(4\delta)^{-1}\big)\beta_k\sigma(e^t)\Big)e^{2t}u}{u},\quad k\geq2.
\end{equation*}
Using the first inequality in \eqref{sigma.2}, we get 
\begin{equation*}
{\rm Re}\poscal{e^{-t}\SL_ku}{e^t(1+i)u}\geq C\beta_k^{1/2}\poscal{e^{2t}u}{u}=C\beta_k^{1/2}\normt{e^tu}^2,\quad k\geq2.
\end{equation*}
By Cauchy-Schwarz inequality, the estimate \eqref{est.easy2} is proved.
\end{proof}

\begin{rem}\rm
When $k=1$ and $\nu_k=0$, the imaginary part of $\h_1$ vanishes on the function $v(r)=rg(r)\in D(\h_1)$, i.e. we have $g{\mathcal K}_1[gv]=\sigma v$. Consequently, when $\nu_k=0$, the imaginary part of $\SL_1$ vanishes on the function $u(t)=e^tg(e^t)$.
\end{rem}

\subsection{Nontrivial cases}\label{sec.nontrivialcase}

We turn to study the cases where the change-of-sign of $\sigma(e^t)-\nu_k$ takes place, that is $\nu_k\in (0,1)$. We have thus $\nu_k=\sigma(e^{t_k})$ for some $t_k\in\R$. Then the operator $\SL_k$ can be written as 
\begin{align}\label{eq.lk2}
\SL_k&=D_t^2+k^2+\frac{1}{16}e^{4t}+i\beta_ke^{2t}\big(\sigma(e^t)-\sigma(e^{t_k})\big)-i\beta_k\gamma(t)\Dk\gamma(t).
\end{align}
Suppose $\epsilon_0,\epsilon_1\in(0,1)$. We discuss four cases according to the behavior of the function $\sigma$ near the point $e^{t_k}$: 
\begin{equation}\label{4cases}
e^{t_k}>\epsilon_0^{-1}\quad \text{or}\quad e^{t_k}\in\big[\epsilon_1,\epsilon_0^{-1}\big]\quad \text{or}\quad  e^{t_k}\in(\beta_k^{-1/4},\epsilon_1)\quad \text{or}\quad  e^{t_k}\leq \beta_k^{-1/4}.
\end{equation}
Before going through the proofs for each case, let us first choose some functions that will be used to construct the multipliers.
Suppose that $c_0\in(0,1)$ is the constant chosen in Proposition \ref{prop.sigma}.
Let $\chi_j\in C^\infty(\R;[0,1])$, $j=0,1,-1$, satisfying that
\begin{equation}\label{eq.chi}
\begin{cases}
\displaystyle\text{supp}\chi_0\subset[-c_0,c_0],\\ 
\displaystyle\chi_+=1\text{ on }[c_0,+\infty),\qquad\text{supp}\chi_+\subset\big[\frac{c_0}{2},+\infty),\\
\displaystyle\chi_-=1\text{ on }(-\infty,-c_0],\ \quad\text{supp}\chi_-\subset(-\infty,-\frac{c_0}{2}\big],\\
\displaystyle\chi_0(\theta)^2+\chi_+(\theta)^2+\chi_-(\theta)^2=1,\quad\forall  \theta\in\R.
\end{cases}
\end{equation}
See Figure \ref{pic1}.
\begin{figure}[ht]
\begin{center}
\hskip0truecm\scalebox{0.8}{\includegraphics{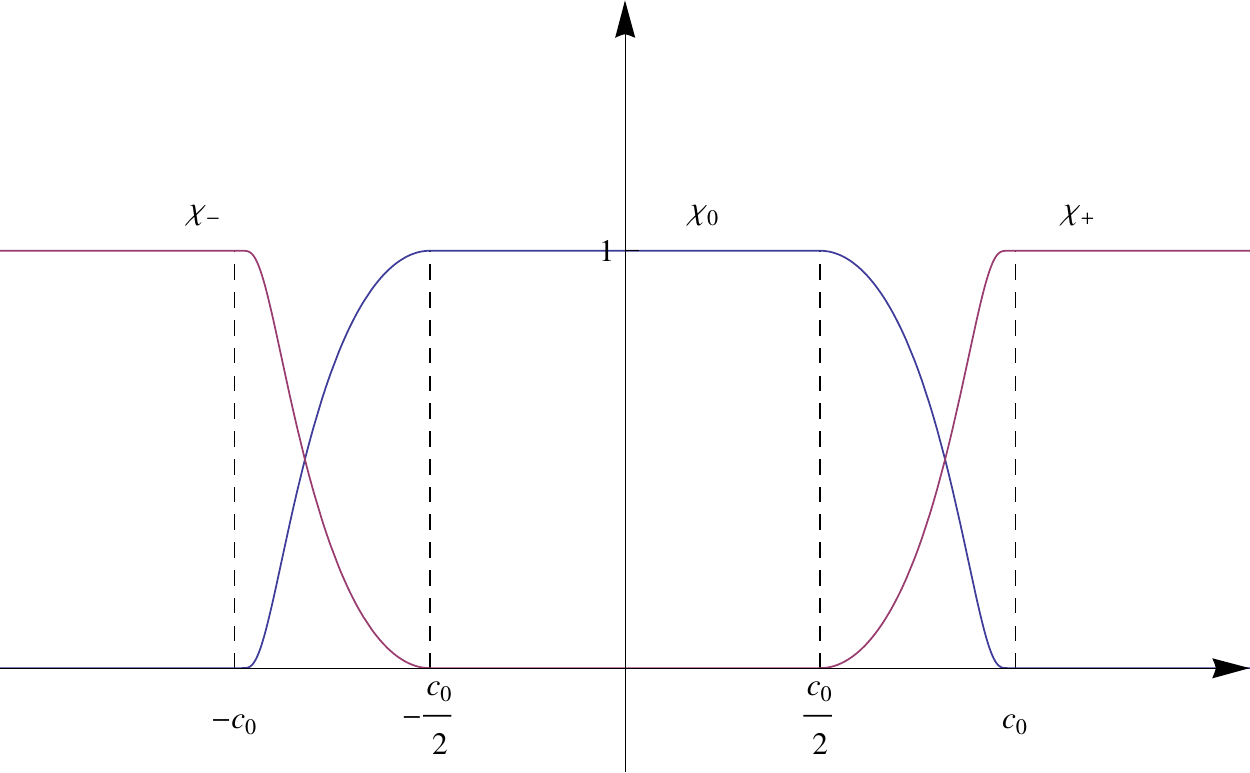}}
\caption{\sc The functions $\chi_0,\chi_\pm$.}
\label{pic1}
\end{center}
\end{figure}
Choose a function $\widetilde\chi_0\in C_0^\infty(\R;[0,1])$ such that 
\begin{equation}\label{eq.tilde.chi}
\displaystyle\widetilde\chi_0=1\text{ on }[-2c_0,2c_0],\quad\text{supp}\widetilde\chi_0\subset[-3c_0,3c_0].
\end{equation}
Take a decreasing function $\psi\in C^\infty(\R;[-1,1])$ such that
\begin{equation}\label{eq.psi}
\psi=1\text{ on }(-\infty,-2],\quad \psi=-1\text{ on }[2,+\infty),\quad\psi'=-\frac{1}{2}\text{ on }[-1,1].
\end{equation}
We can assume that $\psi$ has a factorization
\begin{equation}\label{eq.factorization}
\psi(\theta)=- e(\theta)\theta,
\end{equation}
where $e\in C_b^\infty(\R;[0,1])$ satisfies\footnote{We denote by $C_b^\infty(\R;[0,1])$ the set of smooth functions defined on $\R$ with values in $[0,1]$ such that all their derivatives are bounded.} that
\begin{equation*}
e(\theta)=\frac{1}{2}\text{ for }\theta\in[-1,1],\quad e(\theta)=|\theta|^{-1}\text{ for }|\theta|\geq2.
\end{equation*}

\subsubsection{Plan of the paragraph}\label{sec.plan}
The sections \ref{sec.nontrivialcase}, \ref{sec.nontrivial.c} are organized as follows.
Recall the four cases given in \eqref{4cases} and we give in Proposition \ref{prop.sigma} inequalities about the function $\sigma$ that will be used in the proof for the first three cases. 

Section \ref{sec.case1} is devoted to the proof for Case 1 where $e^{t_k}>\epsilon_0^{-1}$. We shall construct a multiplier adapted to the change-of-sign situation. Moreover, there is a special localization effect in this case (see Remark \ref{rem.localization}).

In Section \ref{sec.case2}, we prove estimates for Case 2 where $e^{t_k}\in[\epsilon_1,\epsilon_0^{-1}]$. The multiplier to be used in this case is the same as that in Case 1.

In Section \ref{sec.case3}, we prove estimates for Case 3 where $e^{t_k}\in(\beta_k^{-1/4},\epsilon_1)$. The multiplier will be different from that in the previous cases and the condition $e^{t_k}>\beta_k^{-1/4}$ is required such that the metric verifies  the uncertainty principle.

Finally, Section \ref{sec.case4} is devoted to proving estimates for the last case where $e^{t_k}\leq\beta_k^{-1/4}$ and estimates are easily obtained by using the multipliers ${\rm Id}$, $-i{\rm Id}$.

\subsubsection{Case 1: $e^{t_k}>\epsilon_0^{-1}$}\label{sec.case1}
We present in Proposition \ref{prop.sigma},(1) some inequalities about the function $\sigma$ that will be used in this case.

\begin{thm}\label{thm.case1}
Suppose $e^{t_k}>\epsilon_0^{-1}$.
There exist $C>0$, $k_0\geq1$ such that for all $k\geq k_0$, $\alpha\geq8\pi$, $u\in C_0^\infty(\R)$,
\begin{equation}\label{est.case1}
\normt{e^{-t}\SL_ku}\geq C\beta_k^{1/3}\normt{e^{t}u},
\end{equation}
where $\SL_k$ is given in \eqref{eq.lk2} and $\beta_k$ is given in \eqref{eq.beta}.
\end{thm}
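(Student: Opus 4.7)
The plan is to prove \eqref{est.case1} by a multiplier method adapted to the subelliptic scaling $\beta_k^{-1/3}$ around the change-of-sign point $t_k$; the gain of $\beta_k^{1/3}$ is exactly what one expects for a complex-Airy-type operator whose imaginary part vanishes linearly. Concretely, I would look for a multiplier of the form $\mathcal{M} = i\,\mathrm{Op}(m_k)$, where $m_k$ is a real symbol with principal part
\[
m_k(t,\tau) \;\approx\; \widetilde\chi_0\bigl(\beta_k^{1/3}(t-t_k)\bigr)\, \psi\bigl(\beta_k^{1/3}(t-t_k)\bigr)\, \chi_0\bigl(\beta_k^{-1/3}\tau\bigr)
\]
quantized with respect to the H\"ormander metric $g = \beta_k^{2/3}(dt)^2 + \beta_k^{-2/3}(d\tau)^2$ (which satisfies the uncertainty principle since $\beta_k \geq \alpha/(8\pi) \geq 1$). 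The three functions $\widetilde\chi_0,\chi_0,\psi$ from \eqref{eq.chi}, \eqref{eq.tilde.chi}, \eqref{eq.psi} localize $\mathcal{M}$ near $(t_k,0)$ in phase space, and the factorization $\psi(\theta)=-e(\theta)\theta$ from \eqref{eq.factorization} supplies the sign matching that makes the main term positive. The operator $\mathcal{M}$ is bounded on $L^2(\R;dt)$ uniformly in $\alpha,k$ and commutes with the weights $e^{\pm t}$ up to terms of size $\beta_k^{-1/3}$, since $e^{\pm t}$ is slowly varying on the scale $\beta_k^{-1/3}$.

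Next I would compute $\mathrm{Re}\,\poscal{\SL_ku}{\mathcal{M} u}$ by writing $\SL_k = A + iB_{\mathrm{loc}} + iB_{\mathrm{nl}}$, with $A = D_t^2 + k^2 + e^{4t}/16$, $B_{\mathrm{loc}} = \beta_k e^{2t}(\sigma(e^t)-\sigma(e^{t_k}))$, and $B_{\mathrm{nl}} = -\beta_k\gamma\Dk\gamma$. The dominant contribution is
\[
\beta_k\!\int\! m_k(t,0)\, e^{2t}\bigl(\sigma(e^t)-\sigma(e^{t_k})\bigr)|u(t)|^2\,dt,
\]
whose integrand is pointwise nonnegative because both $\psi(\beta_k^{1/3}(t-t_k))$ and $\sigma(e^t)-\sigma(e^{t_k})$ are decreasing through zero at $t=t_k$. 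Using $\psi(\theta)=-e(\theta)\theta$ near $t_k$ together with the asymptotic $|\sigma'(r)|r^3 \to 8$ at infinity (which I would extract from Proposition \ref{prop.sigma}(1) in the regime $e^{t_k}>\epsilon_0^{-1}$) the integrand behaves like $\tfrac12\beta_k^{1/3}\,e^{2t}\,(|\sigma'(e^{t_k})|e^{t_k})\,(\beta_k^{1/3}(t-t_k))^2$; the factor $|\sigma'(e^{t_k})|e^{3t_k}$ is bounded below and exactly compensates the weight $e^{2t}$, so integration against $|u|^2$ over the subelliptic box yields a lower bound of the form $c\,\beta_k^{1/3}\,\normt{e^t u_{\mathrm{near}}}^2$, where $u_{\mathrm{near}}$ is the localization of $u$ to $|t-t_k|\lesssim\beta_k^{-1/3}$.

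The three error sources must then be absorbed. The commutator $[D_t^2,\mathcal{M}]$ gives a term of operator size $\beta_k^{1/3}$ that is controlled by the real part $\mathrm{Re}\poscal{\SL_ku}{u}\geq\normt{D_t u}^2$ via symbolic calculus for the metric $g$. The contribution of $u$ supported far from $t_k$ (outside $\mathrm{supp}\,\widetilde\chi_0$) is absorbed by adding multiples of $\pm i\,\mathrm{Id}$ to $\mathcal{M}$ and arguing as in Lemmas \ref{lem.easy1}, \ref{lem.easy2}, because there $|\sigma(e^t)-\sigma(e^{t_k})|$ is bounded below. Most importantly, in this case $\mathcal{M}$ is spatially concentrated near the \emph{large} value $t_k$, whereas $\gamma(t)=e^{2t}e^{-e^{2t}/8}$ peaks near $t=\tfrac12\log 8$ and decays super-exponentially for large $t$; hence $m_k\cdot\gamma$ is exponentially small in $t_k$, and the non-local perturbation $-\beta_k\gamma\Dk\gamma$ contributes an error that is swamped by the main term for $k\geq k_0$ with $k_0$ independent of $\alpha$. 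Cauchy--Schwarz then yields \eqref{est.case1}.

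The hard part, in my view, is Step 1: designing $m_k$ so that simultaneously (i) the pseudodifferential calculus is clean for the H\"ormander metric $g$, (ii) the commutator with $D_t^2$ and, more delicately, with the large potential $e^{4t}/16$ is absorbable, and (iii) the weights $e^{\pm t}$ interact benignly with $\mathcal{M}$. The ``special localization effect'' announced in the introduction, which is specific to Case 1, is the mechanism that tames the potential: because $e^{t_k}$ is large, the potential $e^{4t}/16$ enforces concentration of $u$ on the left of $t_k$ inside the effective support of $\mathcal{M}$, so that the dangerous commutator $[e^{4t}/16,\mathcal{M}]$ acts essentially as a multiplication by a positive quantity which can be kept on the left-hand side.
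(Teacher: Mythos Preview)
Your multiplier has the roles of $t$ and $\tau$ reversed relative to what is needed, and this creates a genuine gap at the change-of-sign point. You place the odd function $\psi$ in the \emph{spatial} variable, so your ``dominant contribution'' is the product
\[
\beta_k\,\psi\bigl(\beta_k^{1/3}(t-t_k)\bigr)\,e^{2t}\bigl(\sigma(e^t)-\sigma(e^{t_k})\bigr),
\]
which is indeed nonnegative but vanishes \emph{quadratically} at $t=t_k$: near $t_k$ it behaves like $c\,\beta_k^{4/3}(t-t_k)^2$. There is no lower bound of the form $c\,\beta_k^{1/3}\normt{e^t u_{\mathrm{near}}}^2$; a function $u$ concentrated at scale $\ll\beta_k^{-1/3}$ around $t_k$ defeats it. The frequency cutoff $\chi_0(\beta_k^{-1/3}\tau)$ does not repair this, since it commutes with multiplication by $B_{\mathrm{loc}}$ up to lower order.

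The paper's mechanism is the opposite one: the cutoffs $\chi_0(t-t_k)$ are at \emph{unit} scale in $t$ and $\psi$ acts in \emph{frequency}, $\psi(\beta_k^{-1/3}D_t)$. The main term is then the commutator
\[
\bigl[\psi(\beta_k^{-1/3}D_t),\, i\beta_k\widetilde\chi_0\,e^{2t}(\sigma(e^t)-\sigma(e^{t_k}))\bigr],
\]
whose principal symbol is the Poisson bracket $\beta_k^{2/3}\psi'(\beta_k^{-1/3}\tau)\,\frac{d}{dt}\bigl[e^{2t}(\sigma(e^t)-\sigma(e^{t_k}))\bigr]$. Here $\psi'=-\tfrac12$ on $[-1,1]$ and Proposition~\ref{prop.sigma}(1) gives $\frac{d}{dt}[\cdots]\le -C_1$ near $t_k$, so the bracket is $\ge \tfrac{C_1}{2}\beta_k^{2/3}$ on a full box in phase space; Fefferman--Phong then yields a genuine lower bound $\tfrac{C_1}{2}\beta_k^{2/3}\normt{\chi_0 u}^2$. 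This is the subelliptic gain, and it cannot be obtained from your product term. Correspondingly, the working metric is $|dt|^2+\frac{|d\tau|^2}{\tau^2+\beta_k^{2/3}}$, not $\beta_k^{2/3}|dt|^2+\beta_k^{-2/3}|d\tau|^2$.

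Two secondary points. First, your far-field patching is fragile: with spatial cutoffs at scale $\beta_k^{-1/3}$, the ``far'' region begins where $|\sigma(e^t)-\sigma(e^{t_k})|$ is only of size $\beta_k^{-1/3}$, not bounded below, so the argument of Lemmas~\ref{lem.easy1}--\ref{lem.easy2} does not apply there. The paper instead uses unit-scale cutoffs $\chi_\pm(t-t_k)$ together with the weakened multipliers $\mp i\beta_k^{-1/3}\chi_\pm^2$; the factor $\beta_k^{-1/3}$ is essential to keep the commutator with the nonlocal term under control. Second, the ``special localization effect'' in Case~1 is not about the potential $e^{4t}/16$ confining $u$; it is the observation (Remark~\ref{rem.localization}) that the nonlocal error carries the small factor $\kappa(e^{t_k})=g(e^{t_k})^{1/2}\times(\text{polynomial})$, which is tiny because $e^{t_k}>\epsilon_0^{-1}$, so the perturbative absorption of $-i\beta_k\gamma\Dk\gamma$ succeeds already for small $k_0$. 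You have the right intuition that $\gamma$ is negligible near $t_k$, but you attribute the effect to the wrong term.
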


\noindent{\bf a. Definition of the multiplier.}
We first give the definition of the H\"ormander-type metric that we shall work with (see Appendix \ref{sec.app.weyl}).
\begin{defi}\label{def.metric}
Define a metric on the phase space $\R_t\times\R_\tau$
\begin{align*}
\Gamma&=|dt|^2+\frac{|d\tau|^2}{\tau^2+\beta_k^{2/3}},
\end{align*}
which is admissible with 
\begin{equation}\label{eq.lambda}
\lambda_\Gamma=(\tau^2+\beta_k^{2/3})^{1/2}\geq\beta_k^{1/3}\geq\big(\frac{\alpha}{8\pi}\big)^{1/3}\geq1,\quad\text{provided }\alpha\geq8\pi.
\end{equation}
\end{defi}

\begin{rem}\rm
We give a proof for the uniform admissibility (w.r.t. $k\geq1,\alpha\geq8\pi$) of the metric $\Gamma$ in Lemma \ref{lem.metric}.
Moreover, the function $f(\beta_k^{-1/3}\tau)$ belongs to $S(1,\Gamma)$ whenever $f\in S(1,\frac{|d\theta|^2}{1+\theta^2})$, since for any $n\in\N$,
\begin{align*}
\big|\frac{\p^n}{\p\tau^n}&\bigl(f(\beta_k^{-1/3}\tau)\bigr)\big|=\big|f^{(n)}(\beta_k^{-1/3}\tau)\beta_k^{-n/3}\big|\\
&\leq C_n(1+|\beta_k^{-1/3}\tau|^2)^{-n/2}\beta_k^{-n/3}=C_n(\beta_k^{2/3}+\tau^2)^{-n/2}.
\end{align*}
\end{rem}

Now we can construct the multiplier, using the functions that we have chosen in \eqref{eq.chi}, \eqref{eq.psi}.
\begin{defi}\label{def.multiplier}
\begin{equation}\label{eq.Mk}
M_k=m_{0,k}^w+m_{+,k}^w+m_{-,k}^w,
\end{equation}
where
\begin{align*}
 m_{0,k}(t,\tau)&=\chi_0(t-t_k)\sharp\psi(\beta_k^{-1/3}\tau)\sharp\chi_0(t-t_k),\\
m_{+,k}(t,\tau)&=-i\beta_k^{-1/3}\chi_+(t-t_k)^2,\\
m_{-,k}(t,\tau)&=i\beta_k^{-1/3}\chi_-(t-t_k)^2,
\end{align*}
where $a^w$ stands for the Weyl quantization for the symbol $a$ and $\sharp$ denotes the composition law in Weyl calculus. (See Appendix \ref{sec.app.weyl} for Weyl calculus.)
\end{defi}

\begin{rem}\rm
The functions $\chi_0(t-t_k)$, $\chi_{\pm}(t-t_k)$, $\psi(\beta_k^{-1/3}\tau)$ are real-valued symbols in $S(1,\Gamma)$. 
Then $M_k$ given in Definition \ref{def.multiplier} is a bounded operator on $L^2(\R;dt)$.
Moreover, we see that
\begin{equation}\label{eq.m0w}
m_{0,k}^w=\chi_0(t-t_k)\psi(\beta_k^{-1/3}D_t)\chi_0(t-t_k)
\end{equation}
and $M_k$ can be written as
\begin{equation*}
M_k=\chi_0(t-t_k)\psi(\beta_k^{-1/3}D_t)\chi_0(t-t_k)-i\beta_k^{-1/3}\chi_+(t-t_k)^2+i\beta_k^{-1/3}\chi_-(t-t_k)^2.
\end{equation*}
Furthermore, the operator $e^tM_{k}^we^{-t}$ is bounded on $L^2(\R;dt)$, since
\begin{align}\label{eq.m01}
e^tm_{0,k}^we^{-t}&=\big[e^{t-t_k}\chi_0(t-t_k)\big]\psi(\beta_k^{-1/3}D_t)\big[\chi_0(t-t_k)e^{-(t-t_k)}\big],
\end{align}
and $|e^{\pm(t-t_k)}\chi_0(t-t_k)|\leq e^{c_0}$. 

\medskip

The three parts in $M_k$ are used to handle different zones in the phase space. We use $m_{0,k}^w$ to localize near the point $t_k$, where the change-of-sign of $\sigma(e^t)-\sigma(e^{t_k})$ happens. The Fourier multiplier $\psi(\beta_k^{-1/3}D_t)$ allows us to obtain some subelliptic estimate in this zone, acting with the skew-adjoint part of $\SL_k$. As we shall see in the computations, it is important to put the cutoff function $\chi_0(t-t_k)$ on both sides of $\psi(\beta_k^{-1/3}D_t)$, so that we are able to do symbolic calculus with  the exponential functions since they are all localized near $t_k$.

The other two multipliers $m_{\pm,k}^w$ are used for dealing with the zones where there is no change-of-sign of $\sigma(e^t)-\sigma(e^{t_k})$, that is $t$ away from the point $t_k$, and the sign of $m_{+,k}, m_{-,k}$ corresponds exactly to the sign of $\sigma(e^t)-\sigma(e^{t_k})$ on their supports. If the non-local term $i\beta_k\gamma\Dk\gamma$ were not present, then we could remove the factor $\beta_k^{-1/3}$ to get better estimates in these zones, as we have already done in \cite{D}. However, we see that the non-local term has a large coefficient $\beta_k$ and it does not commute with $\chi_{\pm}(t-t_k)$, so that we would obtain a commutator of size $\beta_k$ that we would not know how to control. Our strategy is to weaken the multiplier in these regions by multiplying a factor $\beta_k^{-1/3}$.

The method that we use here is perturbative: the non-local term is treated as a perturbation with respect to the main term $\sigma(e^t)-\sigma(e^{t_k})$. Thanks to the operator $\Dk$ and the nice function $\gamma(t)$ (see \eqref{eq.Dk}), this perturbation is controlled by the main term with an extra factor $k^{-2}$. Letting $k\geq k_0$, with $k_0\geq1$ a constant independent of the parameter $\alpha$, we can get the desired result.

However, it is of course impossible to consider the non-local term as a ``global" perturbation, i.e. to absorb it by a term controlled by $\|e^{-t}\SL_{k,0}u\|_{L^2(\R;dt)}$, where $\SL_{k,0}$ is the unperturbed part of $\SL_k$: in fact the size of that perturbation is $\beta_k$ and the best estimate we can hope is controlling a factor $\beta_k^{1/3}$. We have instead to follow our multiplier method to check the effect of the perturbation.
\end{rem}

\medskip

\noindent{\bf b. Computations.}
Now let us compute 2Re$\poscal{\SL_ku}{M_ku}$. 
\begin{prop}\label{prop.case1}
Suppose $e^{t_k}>\epsilon_0^{-1}$.
There exist $c,C>0$ such that for all $k\geq1$, $\alpha\geq8\pi$, $u\in C_0^\infty(\R)$,
\begin{align}
\notag2{\rm Re}\poscal{&\SL_ku}{M_ku}\geq c\beta_k^{2/3}\poscal{\rho(t,t_k)u}{u}-C\beta_k^{2/3}k^{-2}\kappa(e^{t_k})\normt{e^{2t}g(e^t)^{1/2}u}^2\\
\label{eq.prop.case1}&\quad-2\beta_k^{2/3}k^{-2} \normt{e^{2t}g(e^t)\chi_-u}^2-C\normt{D_tu}^2-Ck^2\normt{u}^2-C\normt{e^{2t}u}^2,
\end{align}
where $\SL_k$ is given in \eqref{eq.lk2}, $M_k$ in Definition \ref{def.multiplier}, $\chi_0,\chi_\pm$ in \eqref{eq.chi}, $\sigma,g$ in \eqref{eq.sigma}, $\beta_k$ in \eqref{eq.beta},
\begin{equation}\label{eq.rho1}
\rho(t,t_k)=\chi_0(t-t_k)^2+e^{2t}\sigma(e^{t_k})\chi_+(t-t_k)^2+e^{2t}\sigma(e^{t})\chi_-(t-t_k)^2
\end{equation}
\begin{equation}
\text{and}\qquad\kappa(e^{t_k})=g(e^{t_k})^{1/2}\max\big(1,\big|2-\frac14e^{2t_k}\big|,\big|4-\frac32e^{2t_k}+\frac{1}{16}e^{4t_k}\big|\big).
\end{equation}
 \end{prop}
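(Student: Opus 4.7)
\noindent\emph{Proof plan.} Write $\SL_k = A + iB - iN$, where $A = D_t^2 + k^2 + \tfrac{1}{16}e^{4t}$ is the self-adjoint principal part, $B = \beta_k e^{2t}(\sigma(e^t)-\sigma(e^{t_k}))$ is the self-adjoint multiplier that changes sign at $t_k$, and $N = \beta_k\gamma\Dk\gamma\geq 0$ is the self-adjoint non-local operator. Decompose $M_k = m_{0,k}^w + (m_{+,k}^w + m_{-,k}^w)$ into its self-adjoint and skew-adjoint parts. The strategy is to expand $2\text{Re}\poscal{\SL_ku}{M_ku}$ as a sum of the six cross-terms coming from the pairings of $(A,iB,-iN)$ with $(m_{0,k}^w,m_{\pm,k}^w)$, identify the three pairings with $iB$ as the source of the main positive contribution $c\beta_k^{2/3}\poscal{\rho u}{u}$, and absorb all other pairings into the stated error bounds.

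\medskip

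The central contribution comes from
\begin{equation*}
2\text{Re}\poscal{iBu}{m_{0,k}^wu}=\poscal{i[m_{0,k}^w,B]u}{u}.
\end{equation*}
By Weyl calculus on the admissible metric $\Gamma$ (Definition \ref{def.metric} and Lemma \ref{lem.metric}), the leading symbol of $i[m_{0,k}^w,B]$ is the Poisson bracket
\begin{equation*}
\{m_{0,k},B\}=\chi_0(t-t_k)^2\beta_k^{-1/3}\psi'(\beta_k^{-1/3}\tau)\p_tB.
\end{equation*}
Both factors are nonpositive: $\psi'\leq 0$ since $\psi$ is decreasing, and the inequalities on $\sigma$ from Proposition \ref{prop.sigma} combined with $e^{t_k}>\epsilon_0^{-1}$ give $\p_tB\leq -c\beta_k$ on $\mathrm{supp}\,\chi_0(\cdot-t_k)$. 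Using the factorization $\psi=-e\theta$ of \eqref{eq.factorization} (so that $-\psi'(0)=1/2$) together with the functional-calculus positivity of $(-\psi')(\beta_k^{-1/3}D_t)$ and sharp G\aa rding to absorb the subprincipal correction, one extracts the positive contribution $c\beta_k^{2/3}\poscal{\chi_0(t-t_k)^2u}{u}$, matching the first piece of $\rho$.

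\medskip

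For the skew-adjoint pieces $m_{\pm,k}^w=\mp i\beta_k^{-1/3}\chi_\pm(t-t_k)^2$, which are multiplications, the pairing with $iB$ is diagonal:
\begin{equation*}
2\text{Re}\poscal{iBu}{\mp i\beta_k^{-1/3}\chi_\pm^2u}=\pm 2\beta_k^{2/3}\poscal{e^{2t}\bigl(\sigma(e^t)-\sigma(e^{t_k})\bigr)\chi_\pm^2u}{u}.
\end{equation*}
Since $\sigma$ is decreasing, $\sigma(e^t)-\sigma(e^{t_k})<0$ on $\mathrm{supp}\,\chi_+(\cdot-t_k)$ (where $t>t_k+c_0/2$) and $>0$ on $\mathrm{supp}\,\chi_-(\cdot-t_k)$, so both contributions are nonnegative. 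The quantitative comparison inequalities of Proposition \ref{prop.sigma} in the regime $e^{t_k}>\epsilon_0^{-1}$ bound these from below by $c\beta_k^{2/3}\poscal{e^{2t}\sigma(e^{t_k})\chi_+^2u}{u}$ and $c\beta_k^{2/3}\poscal{e^{2t}\sigma(e^t)\chi_-^2u}{u}$, producing the remaining two pieces of $\rho$.

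\medskip

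The cross-terms of $A$ with the three pieces of $M_k$ (anticommutator with $m_{0,k}^w$, commutators with $m_{\pm,k}^w$) are handled by direct integration by parts and yield errors controlled by $\normt{D_tu}^2+k^2\normt{u}^2+\normt{e^{2t}u}^2$. The cross-terms with $-iN$ are the delicate point since $N$ carries the large prefactor $\beta_k$; the strategy is to keep every such remainder at order $\beta_k^{2/3}k^{-2}$ so that it will be absorbed by \eqref{eq.gkg} once $k\geq k_0$. For $m_{0,k}^w$, expanding $i[m_{0,k}^w,N]$ via Weyl calculus produces commutators with $\gamma$ whose logarithmic derivatives $\gamma'/\gamma=2-\tfrac14 e^{2t}$ and $\gamma''/\gamma=4-\tfrac32 e^{2t}+\tfrac{1}{16}e^{4t}$, evaluated near $t_k$ through the $\chi_0$-localization, deliver precisely the factor $\kappa(e^{t_k})$ and the error $C\beta_k^{2/3}k^{-2}\kappa(e^{t_k})\normt{e^{2t}g(e^t)^{1/2}u}^2$. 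For $m_{\pm,k}^w$, Cauchy--Schwarz applied to the positive quadratic form associated to $N$ together with \eqref{eq.gkg} produces the error $2\beta_k^{2/3}k^{-2}\normt{e^{2t}g(e^t)\chi_-u}^2$. The principal obstacle, and the reason for choosing a \emph{global} pseudodifferential multiplier rather than a local partition of unity as in \cite{D}, is precisely this control of all non-local remainders at order $\beta_k^{2/3}k^{-2}$ uniformly in $k\geq 1$ and $\alpha\geq 8\pi$, which relies on the uniform admissibility of $\Gamma$ (Lemma \ref{lem.metric}) and uniform symbol-class bounds on $\chi_0(t-t_k)$, $\chi_\pm(t-t_k)$ and $\psi(\beta_k^{-1/3}\tau)$.
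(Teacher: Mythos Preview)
Your plan follows the paper's strategy closely and correctly identifies the decomposition, the source of the positive term $c\beta_k^{2/3}\poscal{\rho u}{u}$, and the fact that all non-local remainders must land at order $\beta_k^{2/3}k^{-2}$. Two technical points, however, need sharpening before the plan actually proves the proposition as stated.

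\textbf{Positivity of the central commutator.} Sharp G{\aa}rding is not enough. The Poisson bracket $b_1$ lies in $S(\beta_k^{2/3},\Gamma)\subset S(\lambda_\Gamma^2,\Gamma)$, so sharp G{\aa}rding would only give a lower bound of order $-C\lambda_\Gamma$, which is unbounded in $\tau$. Nor can you appeal directly to ``functional-calculus positivity of $(-\psi')(\beta_k^{-1/3}D_t)$'': the symbol is a product of a function of $\tau$ and a function of $t$, and its quantization is not the operator product. The paper's device is to observe that $b_1(t,\tau)+\tfrac{C_1}{2}\tau^2+\tilde C_1\beta_k^{2/3}\bigl(1-\widetilde\chi_0(2(t-t_k))\bigr)\ge \tfrac{C_1}{2}\beta_k^{2/3}$ holds globally in $(t,\tau)$ and belongs to $S(\lambda_\Gamma^2,\Gamma)$, then to apply Fefferman--Phong; the added $\tau^2$ is paid back by $\|D_t\chi_0 u\|^2\le C\|D_tu\|^2+C\|u\|^2$ and the spatial correction vanishes against $\chi_0 u$.

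\textbf{Handling of the non-local term.} You cannot treat $N=\beta_k\gamma\Dk\gamma$ purely by Weyl calculus in the class $S(\cdot,\Gamma)$: the symbol $(k^2+\tau^2)^{-1}$ is \emph{not} uniformly in $S(1,\Gamma)$ because $k$ and $\beta_k^{1/3}$ are unrelated. For the $m_{0,k}^w$ pairing the paper first peels off one factor of $\gamma$ to produce two pieces: a term with $[\Dk,\chi_0]$, handled by the explicit resolvent identity $[\Dk,\chi_0]=i\Dk\chi_0'D_t\Dk+i\Dk D_t\chi_0'\Dk$ together with the factorization $\psi(\theta)=-e(\theta)\theta$; and a term with $[\gamma,\psi(\beta_k^{-1/3}D_t)]$, which \emph{is} amenable to symbolic expansion to third order and is where $\gamma'/\gamma$, $\gamma''/\gamma$ and hence $\kappa(e^{t_k})$ enter. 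For the $m_{\pm,k}^w$ pairings, a bare Cauchy--Schwarz on the quadratic form of $N$ would yield an error $\beta_k^{2/3}k^{-2}\|\gamma u\|^2$ with no $\kappa(e^{t_k})$ factor and no $\chi_-$ localization, which does not match the statement. The paper instead writes $2\text{Re}\poscal{\chi_\pm\Dk\gamma u}{\chi_\pm\gamma u}=\poscal{[\chi_\pm,[\chi_\pm,\Dk]]\gamma u}{\gamma u}+2\poscal{\Dk\chi_\pm\gamma u}{\chi_\pm\gamma u}$; the diagonal piece has the good sign for $\chi_+$ and produces exactly the stated $-2\beta_k^{2/3}k^{-2}\|\chi_-\gamma u\|^2$ for $\chi_-$, while the double commutator has an explicit kernel $\tfrac{1}{2k}e^{-k|t-s|}(\chi_\pm(t)-\chi_\pm(s))^2$ whose support forces one variable near $t_k$, yielding the extra factor $g(e^{t_k})^{1/2}\le\kappa(e^{t_k})$.
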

 
 \medskip

\noindent{\it Proof of Proposition \ref{prop.case1}.}
First recall that
for all $u\in C_0^\infty(\R)$,
\begin{align}
{\rm Re}\poscal{\SL_ku}{u}
\label{eq.re}&=\normt{D_tu}^2+k^2\normt{u}^2+\frac{1}{16}\normt{e^{2t}u}^2.
\end{align}
In the following computations, we omit the dependence of $\chi_j(t-t_k)$ on $t-t_k$ for the sake of brevity.

\bigskip

\noindent{\it Estimates for $2{\rm Re}\poscal{\SL_ku}{m_{0,k}^wu}$.}
\begin{align}
\notag A:=2\text{Re}\poscal{\SL_ku}{m_{0,k}^wu}&=2{\rm Re}\poscal{i\beta_ke^{2t}\big(\sigma(e^t)-\sigma(e^{t_k})\big)u}{m_{0,k}^wu}\\
\notag&\quad-2{\rm Re}\poscal{i\beta_k\gamma\Dk\gamma u}{m_{0,k}^wu}\\
\notag&\quad+2{\rm Re}\poscal{\big(D_t^2+k^2+\frac{1}{16}e^{4t}\big)u}{m_{0,k}^wu}\\
\label{def.A}&=:A_1+A_2+A_3.
\end{align}
Noticing $\chi_0\widetilde\chi_0=\chi_0$ and \eqref{eq.m0w}, we have
\begin{align*}
A_1&=2{\rm Re}\poscal{i\beta_ke^{2t}\big(\sigma(e^t)-\sigma(e^{t_k})\big)u}{\chi_0\psi(\beta_k^{-1/3}D_t)\chi_0u}\\
&=2{\rm Re}\poscal{i\beta_k\widetilde\chi_0e^{2t}\big(\sigma(e^t)-\sigma(e^{t_k})\big)\chi_0u}{\psi(\beta_k^{-1/3}D_t)\chi_0u}\\
&=\poscal{\Big[\psi(\beta_k^{-1/3}D_t),i\beta_k\widetilde\chi_0e^{2t}\big(\sigma(e^t)-\sigma(e^{t_k})\big)\Big]\chi_0u}{\chi_0u}.
\end{align*}
By \eqref{eq.case1sigma.1}, we know that the symbol $\beta_k\widetilde\chi_0e^{2t}\big(\sigma(e^t)-\sigma(e^{t_k})\big)$ belongs to $S(\beta_k,\Gamma)$ and we get 
$$\Big[\psi(\beta_k^{-1/3}D_t),\ i\beta_k\widetilde\chi_0e^{2t}\big(\sigma(e^t)-\sigma(e^{t_k})\big)\Big]=b_1^w+r_1^w,$$
where $b_1\in S(\beta_k\lambda_\Gamma^{-1},\Gamma)$ is a Poisson bracket and $r_1\in S(\beta_k\lambda_\Gamma^{-3},\Gamma)\subset S(1,\Gamma)$, with $\lambda_\Gamma$ given in \eqref{eq.lambda} (see \eqref{eq.r}).
More precisely, we have
\begin{align*}
b_1(t,\tau)&=\frac{1}{i}\Big\{\psi(\beta_k^{-1/3}\tau),i\beta_k\widetilde\chi_0e^{2t}\big(\sigma(e^t)-\sigma(e^{t_k})\big)\Big\}\\
&=\beta_k^{2/3}\psi'(\beta_k^{-1/3}\tau)\frac{d}{dt}\Big(\widetilde\chi_0e^{2t}\big(\sigma(e^t)-\sigma(e^{t_k})\big)\Big)\in S(\beta_k^{2/3},\Gamma).
\end{align*}
By \eqref{eq.tilde.chi}, \eqref{eq.psi} and \eqref{eq.case1sigma}, we have in the zone $\{|t-t_k|\leq 2c_0,|\tau|\leq\beta_k^{1/3}\}$ 
\begin{align*}
\label{eq.b1}b_1(t,\tau)&=\beta_k^{2/3}\psi'(\beta_k^{-1/3}\tau)\frac{d}{dt}\Big(e^{2t}\big(\sigma(e^t)-\sigma(e^{t_k})\big)\Big)\geq \frac{C_1}{2}\beta_k^{2/3}.
\end{align*}
This implies for all $t,\tau\in\R$,
\begin{equation}\label{eq.fp}
\frac{C_1}{2}\beta_k^{2/3}\leq b_1(t,\tau)+\frac{C_1}{2}\tau^2+\tilde C_1\beta_k^{2/3}\Big(1-\widetilde\chi_0\big(2(t-t_k)\big)\Big)\in S(\lambda_\Gamma^2,\Gamma),
\end{equation}
where $\tilde C_1=2\|b_1\|_{0,S(\beta_k^{2/3},\Gamma)}$.
Indeed, the function
$$b_1(t,\tau)+\tilde C_1\beta_k^{2/3}\Big(1-\widetilde\chi_0\big(2(t-t_k)\big)\Big)\geq \frac{C_1}{2}\beta_k^{2/3}\text{ for all $t\in\R$ and $|\tau|\leq\beta_k^{1/3}$},$$
and it is non-negative for all $t,\tau\in\R$ ; if $|\tau|\geq\beta_k^{1/3}$, then $\tau^2\geq\beta_k^{2/3}$, which proves the inequality in \eqref{eq.fp}. Moreover, each term in the right hand side of \eqref{eq.fp} is in $S(\lambda_\Gamma^2,\Gamma)$.
The Fefferman-Phong inequality (Proposition \ref{prop.fp}) implies 
$$ b_1(t,\tau)^w+\frac{C_1}{2}D_t^2+\tilde C_1\beta_k^{2/3}\Big(1-\widetilde\chi_0\big(2(t-t_k)\big)\Big)\geq \frac{C_1}{2}\beta_k^{2/3}-C'.$$
Applying to $\chi_0u$ and noting $\chi_0(\cdot)\widetilde\chi_0(2\cdot)=\chi_0(\cdot)$, we obtain
\begin{align*}
A_1+\frac{C_1}{2}\poscal{D_t^2\chi_0u}{\chi_0u}&=\poscal{\big(\frac{C_1}{2}D_t^2+b_1^w\big)\chi_0u}{\chi_0u}+\poscal{r_1^w\chi_0u}{\chi_0u}\\
&\geq \frac{C_1}{2}\beta_k^{2/3}\normt{\chi_0u}^2-C''\normt{\chi_0u}^2.
\end{align*}
On the other hand, we have
\begin{align*}
\poscal{D_t^2\chi_0u}{\chi_0u}=\normt{D_t\chi_0u}^2&\leq2\normt{\chi_0D_tu}^2+2\normt{\chi_0'u}^2\\
&\leq C\normt{D_tu}^2+C\normt{u}^2,
\end{align*}
which gives
\begin{align}
\label{eq.A1}A_1\geq \frac{C_1}{2}\beta_k^{2/3}\normt{\chi_0u}^2-C\normt{D_tu}^2-C\normt{u}^2.
\end{align}

For the term $A_2$ defined in \eqref{def.A}, we have
\begin{align}
\notag A_2&=-2{\rm Re}\poscal{i\beta_k\gamma\Dk\gamma u}{m_{0,k}^wu}\\
\notag&=-2{\rm Re}\poscal{i\beta_k \Dk\gamma u}{\gamma m_{0,k}^wu}\\
\notag&=-2{\rm Re}\poscal{i\beta_k \Dk\gamma u}{m_{0,k}^w\gamma u}-2{\rm Re}\poscal{i\beta_k \Dk \gamma u}{\big[\gamma ,m_{0,k}^w\big]u}\\
\label{def.A2}&=:A_{21}+A_{22}.
\end{align}
For $A_{21}$ in \eqref{def.A2}, since $i\Dk$ is skew-adjoint and $m_{0,k}^w$ is self-adjoint, we have
\begin{align}
\label{def.A21} A_{21}&=i\beta_k\poscal{ \big[\Dk,m_{0,k}^w\big] \gamma u}{\gamma u}.
\end{align}
Recalling \eqref{eq.m0w} and noting that $\Dk$ commutes with $\psi(\beta_k^{-1/3}D_t)$, we get
\begin{align}
\big[\Dk,m_{0,k}^w\big]
\label{eq.A21'}&=\big[\Dk,\chi_0\big]\psi(\beta_k^{-1/3}D_t)\chi_0+\chi_0\psi(\beta_k^{-1/3}D_t)\big[\Dk,\chi_0\big].
\end{align}
We compute the commutator as follows 
\begin{align*}
\big[\Dk,\chi_0\big]&=\Dk\chi_0-\chi_0\Dk\\
&=\Dk\Big(\chi_0(D_t^2+k^2)-(D_t^2+k^2)\chi_0\Big)\Dk\\
&=\Dk\Big([\chi_0,D_t]D_t+D_t[\chi_0,D_t]\Big)\Dk\\
&=i\Dk\chi_0'D_t\Dk+i\Dk D_t\chi_0'\Dk,
\end{align*}
which implies
$$\big[\Dk,\chi_0\big]D_t=i\underbrace{\Dk}_{\leq k^{-2}}\chi_0'\underbrace{D_t\Dk D_t}_{\leq1}+i\underbrace{\Dk D_t}_{\leq(2k)^{-1}}\chi_0'\underbrace{\Dk D_t}_{\leq(2k)^{-1}},$$
\begin{align*}
\text{and}\qquad
\|\big[\Dk,\chi_0\big]D_t\|_{{\mathcal L}(L^2(\R;dt))}\leq \frac54\|\chi'_0\|_{L^\infty}k^{-2}.
\end{align*}
The factorization \eqref{eq.factorization} of $\psi$ gives 
$$\psi(\beta_k^{-1/3}D_t)=-\beta_k^{-1/3}D_te(\beta_k^{-1/3}D_t)=-\beta_k^{-1/3}e(\beta_k^{-1/3}D_t)D_t,$$
so that
\begin{align*}
\big[\Dk,\chi_0\big]\psi(\beta_k^{-1/3}D_t)\chi_0&=-\beta_k^{-1/3}\underbrace{\big[\Dk,\chi_0\big]D_t}_{\text{has norm }\leq Ck^{-2}}\underbrace{e(\beta_k^{-1/3}D_t)}_{\text{bounded}}\chi_0,
\end{align*}
\begin{align*}
\text{and}\qquad&\big|\poscal{\big[\Dk,\chi_0\big]\psi(\beta_k^{-1/3}D_t)\chi_0\gamma u}{\gamma u}\big|\leq C\beta_k^{-1/3}k^{-2}\|\chi_0\gamma u\|\|\gamma u\|.
\end{align*}
Similarly we can get
\begin{align*}
&\big|\poscal{\chi_0\psi(\beta_k^{-1/3}D_t)\big[\Dk,\chi_0\big]\gamma u}{\gamma u}\big|\leq C\beta_k^{-1/3}k^{-2}\|\gamma u\|\|\chi_0\gamma u\|.
\end{align*}
By \eqref{def.A21} and \eqref{eq.A21'}, we obtain
\begin{equation}\label{eq.A21}
|A_{21}|\leq C\beta_k^{2/3}k^{-2}\|\chi_0\gamma u\|\|\gamma u\|.
\end{equation}

For the term $A_{22}$ defined in \eqref{def.A2}, we have, using \eqref{eq.m0w}
\begin{align*}
A_{22}&=-2{\rm Re}\poscal{i\beta_k \Dk \gamma u}{\big[\gamma ,m_{0,k}^w\big]u}\\
&=-2{\rm Re}\poscal{i\beta_k \Dk \gamma u}{\chi_0\big[\gamma,\psi(\beta_k^{-1/3}D_t)\big]\chi_0u},
\end{align*}
so that we should compute the commutator $[\gamma,\psi(\beta_k^{-1/3}D_t)]$, for which we will do some symbolic calculus with the metric $\Gamma$ given in Definition \ref{def.metric}.
The symbol $\gamma(t)$ is in $S(1,\Gamma)$ since $\gamma(t)=e^{2t}e^{-e^{2t}/8}$ belongs to $C_b^\infty(\R)$, and we get
$$\big[\gamma,\psi(\beta_k^{-1/3}D_t)\big]=b_2^w+r_2^w,$$
where $b_2\in S(\lambda_\Gamma^{-1},\Gamma)$ is a Poisson bracket and $ r_2\in S(\lambda_\Gamma^{-3},\Gamma)\subset S(\beta_k^{-1},\Gamma)$, with $\lambda_\Gamma$ given in \eqref{eq.lambda} (see \eqref{eq.r}). By direct computation, we have
\begin{align*}
b_2&=\frac{1}{i}\big\{\gamma(t),\psi(\beta_k^{-1/3}\tau)\big\}\\
&=-\frac{1}{i}\beta_k^{-1/3}\psi'(\beta_k^{-1/3}\tau)\gamma'(t)\quad\in S(\beta_k^{-1/3},\Gamma)\\
&=-\frac{1}{i}\beta_k^{-1/3}\psi'(\beta_k^{-1/3}\tau)\sharp \gamma'(t)+b_3+r_3,
\end{align*}
where $b_3\in S(\beta_k^{-1/3}\lambda_\Gamma^{-1},\Gamma)$ is again a Poisson bracket and $r_3$ belongs to $S(\beta_k^{-1/3}\lambda_\Gamma^{-2},\Gamma)$ thus to $S(\beta_k^{-1},\Gamma)$, since $\lambda_\Gamma\geq\beta_k^{1/3}$. We continue to expand $b_3$
\begin{align*}
b_3&=-\frac{1}{2i}\big\{-\frac{1}{i}\beta_k^{-1/3}\psi'(\beta_k^{-1/3}\tau), \gamma'(t)\big\}\\
&=-\frac{1}{2}\beta_k^{-2/3}\psi''(\beta_k^{-1/3}\tau)\gamma''(t)\quad\in S(\beta_k^{-2/3},\Gamma)\\
&=-\frac{1}{2}\beta_k^{-2/3}\psi''(\beta_k^{-1/3}\tau)\sharp\gamma''(t)+r_4,
\end{align*}
with $r_4\in S(\beta_k^{-2/3}\lambda_\Gamma^{-1},\Gamma)\subset S(\beta_k^{-1},\Gamma)$. Thus we get for $w\in C_0^\infty(\R)$,
\begin{align*}
\big[\gamma,\psi(\beta_k^{-1/3}D_t)\big]w&= -\frac{1}{i}\beta_k^{-1/3}\psi'(\beta_k^{-1/3}D_t)\gamma'(t)w-\frac{1}{2}\beta_k^{-2/3}\psi''(\beta_k^{-1/3}D_t)\gamma''(t)w\\
&\quad+\big(r_2^w+r_3^w+r_4^w\big)w,
\end{align*}
where $r_2,r_3,r_4\in S(\beta_k^{-1},\Gamma)$.
Since $\psi'(\beta_k^{-1/3}D_t)$ and $\psi''(\beta_k^{-1/3}D_t)$ are bounded on $L^2(\R;dt)$, we deduce that for $w\in C_0^\infty(\R)$,
\begin{align*}
\normt{\big[\gamma(t),\psi(\beta_k^{-1/3}D_t)\big]w}&\leq C\beta_k^{-1/3}\normt{\gamma'w}+C\beta_k^{-2/3}\normt{\gamma''w}+C\beta_k^{-1}\normt{w}.
\end{align*}
Applying the above inequality to $\chi_0u$, we get the estimate for $A_{22}$ defined in \eqref{def.A2}:
\begin{align}
\notag\big|A_{22}\big|&\leq 2\beta_k \normt{\Dk \gamma u}\normt{\chi_0\big[\gamma,\psi(\beta_k^{-1/3}D_t)\big]\chi_0u}\\
\label{eq.A22}&\leq 2\beta_kk^{-2} \normt{\gamma u}\times\Big(C\beta_k^{-1/3}\normt{\gamma'\chi_0u}+C\beta_k^{-2/3}\normt{\gamma''\chi_0u}+C\beta_k^{-1}\normt{\chi_0u}\Big).
\end{align}
It follows from \eqref{def.A2}, \eqref{eq.A21} and \eqref{eq.A22} that
\begin{align}
\notag\big|A_2\big|&\leq C\beta_k^{2/3}k^{-2} \normt{\gamma u}\times\big(\normt{\chi_0\gamma u}+\normt{\chi_0\gamma'u}\big)\\
\label{eq.A2'}&\qquad+C\beta_k^{1/3}k^{-2}\normt{\gamma u}\normt{\chi_0\gamma''u}+Ck^{-2}\normt{\gamma u}\normt{\chi_0u}.
\end{align}
Recall $\gamma(t)=e^{2t}g(e^t)=e^{2t}e^{-e^{2t}/8}\leq 8/e$ and $g(e^t)=e^{-e^{2t}/8}$, then
\begin{equation}\label{eq.gamma.d}
\gamma'(t)=e^{2t}g(e^t)\big(2-\frac14e^{2t}\big),\quad \gamma''(t)=e^{2t}g(e^t)\big(4-\frac32e^{2t}+\frac{1}{16}e^{4t}\big).
\end{equation}
Letting
\begin{equation}\label{eq.kappak}
\kappa(r)=g(r)^{1/2}\max\big(1,\big|2-\frac{1}{4}r^{2}\big|,\big|4-\frac32r^{2}+\frac{1}{16}r^{4}\big|\big),
\end{equation}
we deduce from \eqref{eq.A2'} that
\begin{align}
\label{eq.A2}\big|A_2\big|&\leq C\beta_k^{2/3}k^{-2} \kappa(e^{t_k})\normt{e^{2t}g(e^t)^{1/2} u}^2+Ck^{-2}\normt{u}^2.
\end{align}

\begin{rem}\label{rem.kappak}\rm
When $e^{t_k}$ is taken large (and we do not need  $k$ large), $\kappa(e^{t_k})$ is very small. In particular, if $\epsilon_0$ is small, since $e^{t_k}>\epsilon_0^{-1}$, $\kappa(e^{t_k})$ is bounded above by $\kappa(\epsilon_0^{-1})$.
\end{rem}

For the term $A_3$ defined in \eqref{def.A}, we have
\begin{align}
\notag A_3&=2{\rm Re}\poscal{D_t^2u}{m_{0,k}^wu}+2{\rm Re}\poscal{k^2u}{m_{0,k}^wu}+\frac{1}{8}{\rm Re}\poscal{e^{4t}u}{m_{0,k}^wu}\\
\label{def.A3}&=:A_{31}+A_{32}+A_{33}.
\end{align}
For $A_{31}$ in \eqref{def.A3},
\begin{align*}
A_{31}&=2{\rm Re}\poscal{D_tu}{D_tm_{0,k}^wu}\\
&=2{\rm Re}\poscal{D_tu}{m_{0,k}^wD_tu}+2{\rm Re}\poscal{D_tu}{\big[D_t,m_{0,k}^w\big]u}\\
&=2{\rm Re}\poscal{D_tu}{m_{0,k}^wD_tu}+\poscal{\big[D_t,[D_t,m_{0,k}^w]\big]u}{u}.
\end{align*}
Since $m_{0,k}\in S(1,\Gamma)$ and $\tau\in S(\lambda_\Gamma,\Gamma)$, the double commutator $\big[D_t,[D_t,m_{0,k}^w]\big]$ has a symbol in $S(1,\Gamma)$. We get
\begin{equation}\label{eq.A31}
\big|A_{31}\big|\leq C\normt{D_tu}^2+C\normt{u}^2.
\end{equation}
Using the $L^2(\R;dt)$-boundedness of $m_{0,k}^w$, we get for $A_{32}$ defined in \eqref{def.A3}
\begin{equation}\label{eq.A32}
\big|A_{32}\big|\leq Ck^2\normt{u}^2.
\end{equation}
For $A_{33}$ in \eqref{def.A3}, 
we have by \eqref{eq.m01}
\begin{align*}
8A_{33}&={\rm Re}\poscal{e^{4t}u}{\chi_0\psi(\beta_k^{-1/3}D_t)\chi_0u}\\
&={\rm Re}\poscal{e^{2t}u}{\underbrace{e^{2t}\chi_0\psi(\beta_k^{-1/3}D_t)\chi_0e^{-2t}}_{\text{bounded on }L^2(\R;dt)}e^{2t}u}.
\end{align*}
Hence
\begin{equation}\label{eq.A33}
\big|A_{33}\big|
\leq C\normt{e^{2t}u}^2.
\end{equation}
By \eqref{def.A3}, \eqref{eq.A31}, \eqref{eq.A32} and \eqref{eq.A33} we get
\begin{equation}\label{eq.A3}
\big|A_3\big|\leq C\normt{D_tu}^2+Ck^2\normt{u}^2+C\normt{e^{2t}u}^2.
\end{equation}

We deduce from \eqref{def.A}, \eqref{eq.A1}, \eqref{eq.A2} and \eqref{eq.A3} that
\begin{align}
\notag A&\geq \frac{C_1}{2}\beta_k^{2/3}\normt{\chi_0u}^2
-C\beta_k^{2/3}k^{-2}\kappa(e^{t_k}) \normt{e^{2t}g(e^t)^{1/2}u}^2\\
\label{eq.A}&\qquad-C\normt{D_tu}^2-Ck^2\normt{u}^2-C\normt{e^{2t}u}^2,
\end{align}
with $\kappa(e^{t_k})$ given by \eqref{eq.kappak}.

\medskip

\noindent{\it Estimates for $2{\rm Re}\poscal{\SL_ku}{m_{+,k}^wu}$.}
Recall that $m_{+,k}^w=-i\beta_k^{-1/3}\chi_+(t-t_k)^2$.
\begin{align}
\notag B^+&:=2{\rm Re}\poscal{\SL_ku}{m_{+,k}^wu}=2{\rm Re}\poscal{\SL_ku}{-i\beta_k^{-1/3}\chi_+^2u}\\
\notag&=2{\rm Re}\poscal{i\beta_ke^{2t}\big(\sigma(e^{t})-\sigma(e^{t_k})\big)u}{-i\beta_k^{-1/3}\chi_+^2u}\\
\notag&\quad-2{\rm Re}\poscal{i\beta_k\gamma\Dk\gamma u}{-i\beta_k^{-1/3}\chi_+^2u}\\
\notag&\quad+2{\rm Re}\poscal{D_t^2u}{-i\beta_k^{-1/3}\chi_+^2u}\\
\label{def.B+}&=:B_{1}^{+}+B^{+}_{2}+B^{+}_{3}.
\end{align}
The support of $\chi_+(t-t_k)$ is included in the set $\{t-t_k\geq c_0/2\}$. By \eqref{eq.case1sigma.2} we have
\begin{align}
\notag B^{+}_{1}&=2\beta_k^{2/3}\poscal{e^{2t}\big(\sigma(e^{t_k})-\sigma(e^t)\big)u}{\chi_+^2u}\\
\label{eq.B+1}&\geq 2c_1\beta_k^{2/3}\poscal{e^{2t}\sigma(e^{t_k})\chi_+^2u}{u}.
\end{align}
For $B^{+}_{2}$ in \eqref{def.B+} we have
\begin{align*}
 B^{+}_{2}&=\beta_k^{2/3}2{\rm Re}\poscal{\chi_+\Dk\gamma u}{\chi_+\gamma u}\\
&=\beta_k^{2/3}2{\rm Re}\poscal{\big[\chi_+,\Dk\big]\gamma u}{\chi_+\gamma u}+ \beta_k^{2/3}2{\rm Re}\poscal{\Dk\chi_+\gamma u}{\chi_+\gamma u}\\
&=\beta_k^{2/3}\poscal{\big[\chi_+,[\chi_+,\Dk]\big]\gamma u}{\gamma u}+ \beta_k^{2/3}2{\rm Re}\underbrace{\poscal{\Dk\chi_+\gamma u}{\chi_+\gamma u}}_{\geq0}.
\end{align*}
The kernel of $\big[\chi_+,[\chi_+,\Dk]\big]$ is
$$\frac{1}{2k}e^{-k|t-s|}\big[\chi_+(t-t_k)-\chi_+(s-t_k)\big]^2,$$
vanishing if $\max(t,s)\leq t_k+c_0/2$ and also if $\min(t,s)\geq t_k+c_0$.
Then we have
\begin{align*}
&\big|\poscal{\big[\chi_+,[\chi_+,\Dk]\big]\gamma u}{\gamma u}\big|\\
&=\Big|\iint_{t-t_k\geq c_0/2} \frac{1}{2k}e^{-k|t-s|}\big[\chi_+(t-t_k)-\chi_+(s-t_k)\big]^2(\gamma u)(s)\overline{(\gamma u)(t)}dtds\\
&\qquad\ +\iint_{t-t_k\leq c_0/2\atop s-t_k\geq c_0/2} \frac{1}{2k}e^{-k|t-s|}\big[\chi_+(t-t_k)-\chi_+(s-t_k)\big]^2(\gamma u)(s)\overline{(\gamma u)(t)}dtds\Big|\\
&\leq g(e^{t_k})^{1/2}\iint_{t-t_k\geq c_0/2} \frac{1}{2k}e^{-k|t-s|}\Big(e^{2s}g(e^s)|u(s)|\Big)\Big( e^{2t}g(e^t)^{1/2} |u(t)|\Big)dtds\\
&\qquad\ +g(e^{t_k})^{1/2}\iint_{t-t_k\leq c_0/2\atop s-t_k\geq c_0/2} \frac{1}{2k}e^{-k|t-s|}\Big(e^{2s}g(e^s)^{1/2}|u(s)|\Big) \Big(e^{2t}g(e^t) |u(t)|\Big)dtds\\
&\leq g(e^{t_k})^{1/2}k^{-2}\|e^{2t}g(e^t)^{1/2}u\|^2,
\end{align*}
so that we obtain
\begin{equation}\label{eq.B+2}
B^{+}_{2}\geq  -\beta_k^{2/3}k^{-2}\kappa(e^{t_k})\normt{e^{2t}g(e^t)^{1/2}u}^2,
\end{equation}
where $\kappa(e^{t_k})$ is given in \eqref{eq.kappak}.
For $B^{+}_{3}$ in \eqref{def.B+}, we have
\begin{align*}
B^{+}_{3}&=\beta_k^{-1/3}\poscal{\big[D_t^2,-i\chi_+^2\big]u}{u}\\
&=i\beta_k^{-1/3}\poscal{(2\chi_+\chi_+''+2\chi_+'^2)u}{u}+i\beta_k^{-1/3}\poscal{4\chi_+\chi_+'\p_tu}{u},
\end{align*}
which implies
\begin{equation}\label{eq.B+3}
\big|B^{+}_{3}\big|\leq C\beta_k^{-1/3}\normt{u}^2+C\beta_k^{-1/3}\normt{\p_tu}\normt{u}\leq C\normt{D_tu}^2+C\normt{u}^2.
\end{equation}

We get from \eqref{def.B+}, \eqref{eq.B+1}, \eqref{eq.B+2} and \eqref{eq.B+3} that
\begin{align}
\notag B^+&\geq 2c_1\beta_k^{2/3}\poscal{e^{2t}\sigma(e^{t_k})\chi_+^2u}{u}\\
\label{eq.B+}&\qquad- \beta_k^{2/3}k^{-2}\kappa(e^{t_k})\normt{e^{2t}g(e^t)^{1/2}u}^2-C\normt{u}^2-C\normt{D_tu}^2.
\end{align}

\medskip

\noindent{\it Estimates for $2{\rm Re}\poscal{\SL_ku}{m_{-,k}^wu}$.}
Recall that $m_{-,k}^w=i\beta_k^{-1/3}\chi_-(t-t_k)^2$.
\begin{align}
\notag B^-&:=2{\rm Re}\poscal{\SL_ku}{m_{-,k}^wu}=2{\rm Re}\poscal{\SL_ku}{i\beta_k^{-1/3}\chi_-^2u}\\
\notag&=2{\rm Re}\poscal{i\beta_ke^{2t}\big(\sigma(e^t)-\sigma(e^{t_k})\big)u}{i\beta_k^{-1/3}\chi_-^2u}\\
\notag&\qquad-2{\rm Re}\poscal{i\beta_k\gamma\Dk\gamma u}{i\beta_k^{-1/3}\chi_-^2u}\\
\notag&\qquad+2{\rm Re}\poscal{D_t^2u}{i\beta_k^{-1/3}\chi_-^2u}\\
\label{def.B-}&=B^{-}_{1}+B^{-}_{2}+B^{-}_{3}.
\end{align}
Recall \eqref{eq.case1sigma.2} and note that the support of $\chi_-(t-t_k)$ is included in $\{t-t_k\leq -c_0/2\}$, then we get
\begin{align}
\notag B^{-}_{1}&=2\beta_k^{2/3}\poscal{e^{2t}\big(\sigma(e^{t})-\sigma(e^{t_k})\big)u}{\chi_-^2u}\\
\label{eq.B-1}&\geq 2c_1\poscal{\beta_k^{2/3}e^{2t}\sigma(e^{t})\chi_-^2u}{u}.
\end{align}
For $B^{-}_{2}$ in \eqref{def.B-} we have
\begin{align*}
B^{-}_{2}&=-2\beta_k^{2/3}{\rm Re}\poscal{\chi_-\Dk\gamma u}{\chi_-\gamma u}\\
&= -2\beta_k^{2/3}{\rm Re}\poscal{\big[\chi_-,\Dk\big]\gamma u}{\chi_-\gamma u}-2\beta_k^{2/3}{\rm Re}\poscal{\Dk\chi_-\gamma u}{\chi_-\gamma u}\\
&= -\beta_k^{2/3}\poscal{\big[\chi_-,[\chi_-,\Dk]\big]\gamma u}{\gamma u}-2\beta_k^{2/3}{\rm Re}\poscal{\Dk\chi_-\gamma u}{\chi_-\gamma u}\\
&=:B^{-}_{21}+B^{-}_{22}.
\end{align*}
For $B^{-}_{21}$, we have 
\begin{equation*}
0\geq B^{-}_{22}=-2\beta_k^{2/3}{\rm Re}\poscal{\Dk\chi_-\gamma u}{\chi_-\gamma u}\geq -2\beta_k^{2/3}k^{-2}\normt{\chi_-\gamma u}^2.
\end{equation*}
By using the method that is used to estimate the double commutator in $B^{+}_{2}$, we find
\begin{equation*}
|B^{-}_{21}|\leq C \beta_k^{2/3}k^{-2}\kappa(e^{t_k})\normt{e^{2t}g(e^t)^{1/2}u}^2,
\end{equation*}
where $\kappa(e^{t_k})$ is given in \eqref{eq.kappak}, so that
\begin{equation}\label{eq.B-2}
B^{-}_{2}\geq -2\beta_k^{2/3}k^{-2}\normt{\chi_-\gamma u}^2- C\beta_k^{2/3}k^{-2}\kappa(e^{t_k})\normt{e^{2t}g(e^t)^{1/2}u}^2.
\end{equation}
For $B^{-}_{3}$ in \eqref{def.B-}, we have
\begin{align*}
B^{-}_{3}&=\beta_k^{-1/3}\poscal{\big[D_t^2,i\chi_-^2\big]u}{u}\\
&=-i\beta_k^{-1/3}\poscal{(2\chi_-\chi_-''+2\chi_-'^2)u}{u}-i\beta_k^{-1/3}\poscal{4\chi_-\chi_-'\p_tu}{u},
\end{align*}
which implies
\begin{equation}\label{eq.B-3}
\big|B^{-}_{3}\big|\leq C\beta_k^{-1/3}\normt{u}^2+C\beta_k^{-1/3}\normt{\p_tu}\normt{u}\leq C\normt{u}^2+C\normt{D_tu}^2.
\end{equation}

We get from \eqref{def.B-}, \eqref{eq.B-1}, \eqref{eq.B-2} and \eqref{eq.B-3} that
\begin{align}
\notag B^-&\geq 2c_1\beta_k^{2/3}\poscal{e^{2t}\sigma(e^{t})\chi_-^2u}{u}-C\normt{u}^2-C\normt{D_tu}^2\\
\label{eq.B-}&\qquad-2\beta_k^{2/3}k^{-2}\normt{\chi_-e^{2t}g(e^t) u}^2-\beta_k^{2/3}k^{-2}\kappa(e^{t_k})\normt{e^{2t}g(e^t)^{1/2}u}^2.
\end{align}

\medskip

\begin{proof}[End of the proof of Proposition \ref{prop.case1}]
By \eqref{eq.A}, \eqref{eq.B+}, \eqref{eq.B-} and the definition \ref{def.metric} of $M_k$, we get
\begin{align*}
\notag2{\rm Re}\poscal{\SL_ku}{&M_ku}=A+B^++B^-\\
\notag&\geq \frac{C_1}{2}\beta_k^{2/3}\normt{\chi_0u}^2+2c_1\beta_k^{2/3}\poscal{e^{2t}\sigma(e^{t_k})\chi_+^2u}{u}+2c_1\beta_k^{2/3}\poscal{e^{2t}\sigma(e^{t})\chi_-^2u}{u}\\
\notag&\quad-C\beta_k^{2/3}k^{-2}\kappa(e^{t_k})\normt{e^{2t}g(e^t)^{1/2}u}^2-2\beta_k^{2/3}k^{-2} \normt{\chi_-e^{2t}g(e^t)u}^2\\
\notag&\qquad-C\normt{D_tu}^2-Ck^2\normt{u}^2-C\normt{e^{2t}u}^2,
\end{align*}
where $\kappa(e^{t_k})$ is given in \eqref{eq.kappak}. This completes the proof of \eqref{eq.prop.case1} in Proposition \ref{prop.case1}.
\end{proof}

Recall the definition \eqref{eq.rho1} of $\rho(t,t_k)$, then \eqref{eq.prop.case1} implies
\begin{align}
\notag2{\rm Re}\poscal{\SL_ku}{M_ku}&\geq \beta_k^{2/3}\poscal{\Big(c\rho(t,t_k)-Ck^{-2}e^{4t}g(e^t)\Big)u}{u}\\
\label{eq.prop.case1.1}
&\qquad-C\normt{D_tu}^2-Ck^2\normt{u}^2-C\normt{e^{2t}u}^2,
\end{align}
since $\kappa(e^{t_k})$ is bounded above by a constant depending on $\epsilon_0$ (see Remark \ref{rem.kappak}).
We have the following two estimates for $\rho(t,t_k)$.

\begin{lem}\label{lem.rho1}
There exist $C_4,C_5>0$ such that for $e^{t_k}>\epsilon_0^{-1}$, $t\in\R$, $\alpha\geq8\pi$, $k\geq1$,
\begin{equation}\label{eq.rho1.1}
\rho(t,t_k)\geq C_4e^{4t}g(e^t),
\end{equation}
\begin{equation}\label{eq.rho1.2}
\beta_k^{2/3}\rho(t,t_k)+e^{4t}\geq C_5\beta_k^{1/3}e^{2t},
\end{equation}
where $\rho(t,t_k)$ is given in \eqref{eq.rho1}, $g$ is given in \eqref{eq.sigma} and $\beta_k$ is given in \eqref{eq.beta}.
\end{lem}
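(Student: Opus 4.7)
The proof is a case analysis on the regions where $\chi_0,\chi_+,\chi_-$ vanish, combined with elementary estimates on $\sigma$ and $g$. Two preliminary facts are used throughout: \emph{(a)} $e^{4t}g(e^t)=s^2 e^{-s/8}$ with $s=e^{2t}$, hence is bounded above by a universal constant $M$ on $\R$; \emph{(b)} since $e^{t_k}>\epsilon_0^{-1}$, one has $\sigma(e^{t_k})\geq 4\mu_0 e^{-2t_k}$ with $\mu_0:=1-e^{-\epsilon_0^{-2}/4}>0$, while $e^{2t}\sigma(e^t)=4(1-e^{-e^{2t}/4})$ is increasing in $t$ and takes values in $(0,4)$.

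\emph{Proof of \eqref{eq.rho1.1}.} On Zone I $\{|t-t_k|\leq c_0/2\}$ we have $\chi_0=1$, so $\rho\geq 1\geq M^{-1}e^{4t}g(e^t)$. On Zone III.$+:=\{t\geq t_k+c_0\}$ we have $\rho=e^{2t}\sigma(e^{t_k})$; using $e^{2t_k}\leq e^{-2c_0}e^{2t}$ together with (b), the ratio $e^{4t}g(e^t)/\rho$ is bounded by $(4\mu_0 e^{2c_0})^{-1}e^{4t}e^{-e^{2t}/8}\leq (4\mu_0 e^{2c_0})^{-1}M$. On Zone III.$-:=\{t\leq t_k-c_0\}$ we have $\rho=e^{2t}\sigma(e^t)$ and the ratio equals $e^{4t}e^{-e^{2t}/8}/[4(1-e^{-e^{2t}/4})]$, a function of $s=e^{2t}$ alone whose limits at $s\to 0^+$ and $s\to\infty$ are both finite, hence bounded. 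The transition zones $c_0/2\leq|t-t_k|\leq c_0$ reduce to the above via the partition identity $\chi_0^2+\chi_\pm^2=1$, which gives $\rho\geq\min(1,e^{2t}\sigma(e^{t_k}))$ or $\rho\geq\min(1,e^{2t}\sigma(e^t))$ as appropriate.

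\emph{Proof of \eqref{eq.rho1.2}.} If $e^{2t}\geq\beta_k^{1/3}$ then $e^{4t}\geq\beta_k^{1/3}e^{2t}$ and the inequality holds with $C_5=1$. Otherwise $e^{2t}\leq\beta_k^{1/3}$ and it suffices to prove $\rho\geq C_5\beta_k^{-1/3}e^{2t}$. On Zone I, $\rho=1\geq\beta_k^{-1/3}e^{2t}$. On Zone III.$+$, the constraints $e^{2t}\geq e^{2t_k+2c_0}$ and $e^{2t}\leq\beta_k^{1/3}$ force $e^{2t_k}\leq e^{-2c_0}\beta_k^{1/3}$; combined with $\sigma(e^{t_k})\geq 4\mu_0 e^{-2t_k}$ this yields $\rho\geq 4\mu_0 e^{2c_0}\beta_k^{-1/3}e^{2t}$. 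On Zone III.$-$ we split: if $e^{2t}\leq 1$ then $\sigma(e^t)\geq\sigma(1)$ and $\beta_k\geq 1$ give the bound; if $1\leq e^{2t}\leq\beta_k^{1/3}$ then $e^{2t}\sigma(e^t)\geq 4(1-e^{-1/4})$, so $\sigma(e^t)\geq 4(1-e^{-1/4})/e^{2t}\geq 4(1-e^{-1/4})\beta_k^{-1/3}$, again yielding the bound. Transition zones reduce to these cases as before.

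The main obstacle is that $\sigma(e^{t_k})$ can be arbitrarily small when $e^{t_k}$ is large; what rescues both bounds is the coupling $e^{2t}\geq e^{2t_k+2c_0}$ available on the support of $\chi_+$, which trades the smallness of $\sigma(e^{t_k})$ against either the super-exponential decay $e^{-e^{2t}/8}$ (for \eqref{eq.rho1.1}) or the constraint $e^{2t}\leq\beta_k^{1/3}$ (for \eqref{eq.rho1.2}). All resulting constants $C_4,C_5$ depend only on $\epsilon_0$ and $c_0$, not on $k$ or $\alpha$.
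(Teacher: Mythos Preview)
Your proof is correct and follows essentially the same route as the paper: a case analysis on the supports of $\chi_0,\chi_+,\chi_-$, using that $e^{4t}g(e^t)$ is globally bounded, that $\sigma(e^{t_k})\gtrsim e^{-2t_k}$ when $e^{t_k}>\epsilon_0^{-1}$, and that $e^{2t}\sigma(e^t)$ is bounded below on the $\chi_-$ region. The only notable difference is in \eqref{eq.rho1.2}: you split globally on $e^{2t}\lessgtr\beta_k^{1/3}$ and then bound $\rho$ alone in each zone, whereas the paper stays within each zone and combines the two terms directly via AM--GM (for the $\chi_+$ zone, $\delta\beta_k^{2/3}e^{-2t_k}+e^{2t_k}\geq 2\delta^{1/2}\beta_k^{1/3}$) or by invoking the prepared inequality \eqref{sigma.2} (for the $\chi_-$ zone, $\beta_k^{2/3}\sigma(e^t)+e^{2t}\geq(2\log 2)^{-1}\beta_k^{1/3}$). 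Your global split is a bit more verbose but equally valid; the paper's AM--GM shortcut is slightly cleaner and avoids the secondary split on $e^{2t}\lessgtr 1$ in the $\chi_-$ zone.
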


\begin{proof}[Proof of Lemma \ref{lem.rho1}]
Suppose $e^{t_k}>\epsilon_0^{-1}$, then $\sigma(e^{t_k})\geq \delta e^{-2t_k}$ for some $\delta >0$. Note also that the function $r^{4}g(r)$ is bounded.
If $t$ is in the support of $\chi_0(\cdot-t_k)$, i.e. $|t-t_k|\leq c_0$, we have
$$e^{4t}g(e^t)\leq Ce^{t_k}g(e^{t_k})\leq C,$$ 
$$\beta_k^{2/3}+e^{4t}= \big(\beta_k^{2/3}e^{-2t}+e^{2t}\big)e^{2t}\geq \beta_k^{1/3}e^{2t}.$$
When $t$ is in the support of $\chi_+(\cdot-t_k)$, i.e. $t\geq t_k+c_0/2$, we have
$$e^{2t}\sigma(e^{2t_k})\geq \delta e^{2t}e^{-2t_k}\geq  \delta e^{c_0}\geq Ce^{4t}g(e^t),$$
$$\beta_k^{2/3}e^{2t}\sigma(e^{t_k})+e^{4t}\geq ( \delta\beta_k^{2/3}e^{-2t_k}+e^{2t_k})e^{2t}\geq  \delta^{1/2}\beta_k^{1/3}e^{2t}.$$
When $t$ is in the support of $\chi_-(\cdot-t_k)$, i.e. $t\leq t_k-c_0/2$, we have, using \eqref{sigma.3} and the first inequality in \eqref{sigma.2}
$$e^{4t}g(e^t)\leq 16e^{2t}\sigma(e^{t}),$$
$$\beta_k^{2/3}e^{2t}\sigma(e^{t})+e^{4t}\geq \big(\beta_k^{2/3}\sigma(e^{t})+e^{2t}\big)e^{2t}\geq (2\log2)^{-1}\beta_k^{1/3}e^{2t}.$$
This completes the proof of \eqref{eq.rho1.1} and \eqref{eq.rho1.2}.
\end{proof}

\begin{proof}[Proof of Theorem \ref{thm.case1}]
The estimates \eqref{eq.prop.case1.1} and \eqref{eq.rho1.1} imply that there exists $k_0\geq1$, for all $k\geq k_0$, 
\begin{align*}
2{\rm Re}\poscal{\SL_ku}{M_ku}&\geq \frac{c}{2}\beta_k^{2/3}\poscal{\rho(t,t_k)u}{u}-C\normt{D_tu}^2-Ck^2\normt{u}^2-C\normt{e^{2t}u}^2.
\end{align*}
Together with \eqref{eq.re}, by choosing $C_6>0$ large enough, we have for $k\geq k_0$,
\begin{align}
{\rm Re}\poscal{\SL_ku}{(C_6+2M_k)u}&\geq \frac{c}{2}\poscal{\Big(\beta_k^{2/3}\rho(t,t_k)+D_t^2+k^2+e^{4t}\Big)u}{u}.
\label{eq.case1.1}
\end{align}
It follows from \eqref{eq.case1.1} and \eqref{eq.rho1.2} that for $k\geq k_0$,
\begin{align}\label{eq.case1.2}
{\rm Re}\poscal{\SL_ku}{(C_6+2M_k)u}&\geq C\poscal{\beta_k^{1/3}e^{2t}u}{u}.
\end{align}
Noticing that $e^t(C_6+2M_k)e^{-t}$ is bounded on $L^2(\R;dt)$ by \eqref{eq.m01} and that
$$\poscal{\SL_ku}{(C_6+2M_k)u}=\poscal{e^{-t}\SL_ku}{\big(e^t(C_6+2M_k)e^{-t}\big)(e^tu)},$$
we deduce from \eqref{eq.case1.2} and Cauchy-Schwarz inquality that
$$\normt{e^{-t}\SL_ku}\normt{e^tu}\geq C\beta_k^{1/3}\normt{e^tu}^2,\quad\text{for }k\geq k_0,$$
which is
\begin{equation*}
\normt{e^{-t}\SL_ku}\geq C\beta_k^{1/3}\normt{e^tu}, \quad k\geq k_0,
\end{equation*}
completing the proof of Theorem \ref{thm.case1}.
\end{proof}

\begin{rem}\label{rem.localization}\rm
There is a localization effect taking place in this case. 
We see in \eqref{eq.prop.case1} of Proposition \ref{prop.case1} that the coefficient of the term $\normt{e^{2t}g(e^t)^{1/2}u}^2$ has a factor $\kappa(e^{t_k})$, which is small if $e^{t_k}$ is taken very large (see Remark \ref{rem.kappak}). As a result, if we suppose $e^{t_k}$ large enough, this term is negligible, and the only bad term coming from the nonlocal operator that we need to control is 
$$2\beta_k^{2/3}k^{-2}\normt{e^{2t}g(e^t)\chi_-u}^2.$$ 
On the other hand, we can prove that there exists $\epsilon_2>0$ such that for all $e^{t_k}>\epsilon_2^{-1}$,
$$\forall k\geq2,\ \forall t\leq t_k-\frac{c_0}{2},\quad 2e^{2t}\big(\sigma(e^t)-\sigma(e^{t_k})\big)>2k^{-2} e^{4t}g(e^t)^2.$$
This implies that it suffices to take $k\geq2$ to absorb the remainders and thus Theorem \ref{thm.case1} holds for $k_0=2$ and $e^{t_k}>\epsilon_2^{-1}$.
Furthermore, we shall see that this localization effect does not present in Case 2 and Case 3. 
\end{rem}

\subsubsection{Case 2: $e^{t_k}\in[\epsilon_1,\epsilon_0^{-1}]$}\label{sec.case2}

\begin{thm}\label{thm.case2}
Suppose $e^{t_k}\in[\epsilon_1,\epsilon_0^{-1}]$. Then there exist $C>0$, $k_0\geq1$ such that for all $k\geq k_0$, $\alpha\geq 8\pi$, $u\in C_0^\infty(\R)$,
\begin{equation}\label{est.case2}
\normt{e^{-t}\SL_ku}\geq C\beta_k^{2/3}\normt{e^tu},
\end{equation}
where $\SL_k$ is given in \eqref{eq.lk2} and $\beta_k$ is given in \eqref{eq.beta}.
\end{thm}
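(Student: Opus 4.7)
The plan is to rerun the multiplier argument of Section \ref{sec.case1} with the same operator $M_k$ of Definition \ref{def.multiplier}, and to explain why the bounded regime $e^{t_k}\in[\epsilon_1,\epsilon_0^{-1}]$ yields the stronger rate $\beta_k^{2/3}$ in place of the $\beta_k^{1/3}$ of Case~1. Since $M_k$ depends on $t_k$ only through the translation $t\mapsto t-t_k$, the admissibility of the metric $\Gamma$, the Fefferman--Phong step used on $A_1$, the commutator calculations in $A_2$, and the estimates of $B^\pm_1,B^\pm_2,B^\pm_3$ all carry over unchanged. The only place where the Case~1 hypothesis $e^{t_k}>\epsilon_0^{-1}$ entered the proof of Proposition~\ref{prop.case1} was through the bounds on $\sigma$ supplied by item~(1) of Proposition~\ref{prop.sigma}; here one uses instead item~(2), which provides the corresponding uniform bounds for $e^{t_k}$ in the compact interval. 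One thereby obtains the same conclusion \eqref{eq.prop.case1} with the same $\rho(t,t_k)$ from \eqref{eq.rho1}.

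The crucial difference from Case~1 is that $e^{t_k}$ now stays in a compact interval, so $\sigma(e^{t_k})$ and $\kappa(e^{t_k})$ are both bounded above, and in particular $\sigma(e^{t_k})$ is also bounded below by a positive constant $c_{\epsilon_0,\epsilon_1}>0$. This allows me to prove the following strengthened analog of Lemma~\ref{lem.rho1}: there exists $c>0$, depending only on $\epsilon_0,\epsilon_1$, such that
\[
\rho(t,t_k)\geq c\, e^{2t},\qquad \rho(t,t_k)\geq c\, e^{4t}g(e^t),\qquad \forall t\in\R.
\]
Indeed, on the support of $\chi_0(\cdot-t_k)$, $e^{2t}\leq e^{2(t_k+c_0)}$ is bounded above, so $\rho=\chi_0^2\geq c\, e^{2t}\chi_0^2$; on $\{t-t_k\geq c_0/2\}$, $\sigma(e^{t_k})\geq c_{\epsilon_0,\epsilon_1}$ gives $\rho\geq c_{\epsilon_0,\epsilon_1}e^{2t}\chi_+^2$; and on $\{t-t_k\leq -c_0/2\}$, $e^t\leq \epsilon_0^{-1}e^{-c_0/2}$ and $\sigma(e^t)$ is bounded below, so $\rho\geq c' e^{2t}\chi_-^2$. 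The second inequality then follows from the first because $r^2 g(r)$ is globally bounded on $\R_+$.

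Armed with this, I would choose $k_0$ large enough (depending only on $\epsilon_0,\epsilon_1$) so that the nonlocal perturbation remainders on the right-hand side of \eqref{eq.prop.case1}, namely $C\beta_k^{2/3}k^{-2}\kappa(e^{t_k})\normt{e^{2t}g(e^t)^{1/2}u}^2+2\beta_k^{2/3}k^{-2}\normt{e^{2t}g(e^t)\chi_- u}^2$, are absorbed by $\tfrac{c}{2}\beta_k^{2/3}\langle\rho(t,t_k)u,u\rangle$ via the second inequality above. Adding a sufficiently large multiple $C_6$ of the real part \eqref{eq.re} to consume $C\normt{D_tu}^2+Ck^2\normt{u}^2+C\normt{e^{2t}u}^2$, we arrive at
\[
\text{Re}\langle \mathscr{L}_k u,(C_6+2M_k)u\rangle\geq \tfrac{c}{4}\beta_k^{2/3}\langle \rho(t,t_k)u,u\rangle\geq c''\beta_k^{2/3}\normt{e^t u}^2.
\]
Since $e^t(C_6+2M_k)e^{-t}$ is bounded on $L^2(\R;dt)$, Cauchy--Schwarz yields \eqref{est.case2}.

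The main obstacle will be the bookkeeping required to ensure that all constants depend only on $\epsilon_0,\epsilon_1$ and not on $t_k$ (or $\alpha$), so that the threshold $k_0$ and the final constant are uniform over the interval $[\epsilon_1,\epsilon_0^{-1}]$. The upgrade from $\beta_k^{1/3}$ to $\beta_k^{2/3}$ is entirely due to the pointwise inequality $\rho(t,t_k)\geq c e^{2t}$, which replaces the weaker combined estimate $\beta_k^{2/3}\rho+e^{4t}\geq C_5\beta_k^{1/3}e^{2t}$ of Lemma~\ref{lem.rho1} that was forced by the degeneration $\sigma(e^{t_k})\sim 4 e^{-2t_k}\to 0$ of Case~1.
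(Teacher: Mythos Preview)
Your proposal is correct and follows essentially the same route as the paper: reuse the multiplier $M_k$ of Definition~\ref{def.multiplier}, replace the $\sigma$-inequalities of Proposition~\ref{prop.sigma}\,(1) by those of item~(2), prove the key pointwise bound $\rho(t,t_k)\geq c\,e^{2t}$ (this is exactly the paper's Lemma~\ref{lem.rho2}), absorb the nonlocal remainders by taking $k\geq k_0$, and conclude via Cauchy--Schwarz. The only cosmetic difference is that the paper first simplifies the $A_2$ and $B^\pm_2$ remainders using $|\chi_0\gamma'|,|\chi_0\gamma''|\leq C\gamma$ on the compact $t_k$-range, obtaining $C\beta_k^{2/3}k^{-2}\|\gamma u\|^2$ and the companion inequality $\rho\geq C_7 e^{4t}g(e^t)^2$; you instead keep the Case~1 form \eqref{eq.prop.case1} with $\kappa(e^{t_k})$ bounded and derive $\rho\geq c\,e^{4t}g(e^t)$ from $\rho\geq c\,e^{2t}$ via the boundedness of $r^2g(r)$, which works equally well.
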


We present some inequalities concerning $\sigma$ that will be used in Case 2 in Proposition \ref{prop.sigma},(2). 
Note that they are similar to those in the Case 1 (given in Proposition \ref{prop.sigma},(1)). 

We use the metric $\Gamma$ and the multiplier $M_k$ in Definition \ref{def.metric}, \ref{def.multiplier}, and we use the notations $A_1,A_2,A_3,B^+,B^-$ in \eqref{def.A}, \eqref{def.B+}, \eqref{def.B-}. The estimate \eqref{eq.A1} for $A_{1}$ is valid with constant $C_1$ replaced by $C_2$ (which is given in \eqref{eq.case2sigma}) and the estimate \eqref{eq.A3} for $A_3$ holds in Case 2. 
For $A_2$, the estimate \eqref{eq.A2'} remains true:
\begin{align*}
|A_2|&=\big|2{\rm Re}\poscal{i\beta_k\gamma\Dk\gamma u}{m_{0,k}^wu}\big|\\
&\leq C\beta_k^{2/3}k^{-2}\normt{\gamma u}\big(\normt{\chi_0\gamma u}+\normt{\chi_0\gamma'u}\big)\\
&\qquad+C\beta_k^{2/3}k^{-2}\normt{\gamma u}\normt{\chi_0\gamma''u}+Ck^{-2}\normt{\gamma u}\normt{\chi_0u}.
\end{align*}
In the case where $e^{t_k}\in[\epsilon_1,\epsilon_0^{-1}]$, we have by \eqref{eq.gamma.d}:
$$|\chi_0(t-t_k)\gamma'(t)|\leq C\gamma(t),\quad |\chi_0(t-t_k)\gamma''(t)|\leq C\gamma(t),$$
for some $C$ depending on $\epsilon_1,\epsilon_0^{-1}$, so that 
\begin{equation}
|A_2|\leq C\beta_k^{2/3}k^{-2}\normt{\gamma u}^2+Ck^{-2}\normt{u}^2.
\end{equation}
For the terms $B^+, B^-$, we have by \eqref{eq.case2sigma.2},
\begin{align*}
B^+&=2{\rm Re}\poscal{\SL_ku}{m_{+,k}^wu}\\
&\geq 2c_2\beta_k^{2/3}\poscal{e^{2t}\sigma(e^{t_k})\chi_+^2u}{u}-C\normt{u}^2-C\normt{D_tu}^2-2\beta_k^{2/3}k^{-2}\normt{\gamma u}^2,
\end{align*}
\begin{align*}
B^-&=2{\rm Re}\poscal{\SL_ku}{m_{-,k}^wu}\\
&\geq 2c_2\beta_k^{2/3}\poscal{e^{2t}\sigma(e^{t})\chi_-^2u}{u}-C\normt{u}^2-C\normt{D_tu}^2-2\beta_k^{2/3}k^{-2}\normt{\gamma u}^2.
\end{align*}
Summarizing, we get that for all $k\geq1$, $\alpha\geq8\pi$, $u\in C_0^\infty(\R)$,
\begin{align}
\notag2{\rm Re}\poscal{&\SL_ku}{M_ku}=A+B^++B^-\\
\notag&\geq \frac{C_2}{2}\beta_k^{2/3}\normt{\chi_0u}^2+2c_2\beta_k^{2/3}\poscal{e^{2t}\sigma(e^{t_k})\chi_+^2u}{u}+2c_2\beta_k^{2/3}\poscal{e^{2t}\sigma(e^{t})\chi_-^2u}{u}\\
\notag&\quad-C\beta_k^{2/3}k^{-2}\normt{\gamma u}^2-C\normt{D_tu}^2-Ck^2\normt{u}^2-C\normt{e^{2t}u}^2,
\end{align}
so that the following proposition is proved:
\begin{prop}\label{prop.case2}
Suppose $e^{t_k}\in[\epsilon_1,\epsilon_0^{-1}]$. There exist $c>0,C>0$ such that for $k\geq1$, $\alpha\geq8\pi$, $u\in C_0^\infty(\R)$,
\begin{align}
\notag2{\rm Re}\poscal{\SL_ku}{M_ku}&\geq \beta_k^{2/3}\poscal{\Big(c\rho(t,t_k)-Ck^{-2}e^{4t}g(e^t)^2\Big)u}{u}\\
\label{eq.case2.1}&\quad-C\normt{D_tu}^2-Ck^2\normt{u}^2-C\normt{e^{2t}u}^2,
\end{align}
where $\SL_k$ is given in \eqref{eq.lk2}, $M_k$ in Definition \ref{def.multiplier}, $g$ in \eqref{eq.sigma}, $\beta_k$ in \eqref{eq.beta} and
\begin{equation}\label{eq.rho2}
\rho(t,t_k)=\chi_0(t-t_k)^2+e^{2t}\sigma(e^{t_k})\chi_+(t-t_k)^2+e^{2t}\sigma(e^{t})\chi_-(t-t_k)^2,
\end{equation}
with $\chi_0,\chi_\pm$ defined in \eqref{eq.chi} and $\sigma$ in \eqref{eq.sigma}.
\end{prop}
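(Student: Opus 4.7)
\medskip

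\noindent\textbf{Proof proposal.} The plan is to recycle the multiplier method of Case~1 with the same H\"ormander metric $\Gamma$ and multiplier $M_k$ from Definitions \ref{def.metric} and \ref{def.multiplier}, and to split
$$2{\rm Re}\poscal{\SL_k u}{M_k u} = A + B^+ + B^-, \qquad A = A_1 + A_2 + A_3,$$
exactly as in \eqref{def.A}, \eqref{def.B+}, \eqref{def.B-}. Inspection of the Case~1 derivation shows that all the symbolic-calculus bookkeeping in $A_1$, $A_3$ and in the $B_3^\pm$ pieces (Fefferman--Phong applied to a Poisson bracket, double commutator estimates, $L^2(\R;dt)$-boundedness of $m_{0,k}^w$) is insensitive to whether $e^{t_k}$ is very large or confined to the compact interval $[\epsilon_1,\epsilon_0^{-1}]$. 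Only the coercivity constant for the change-of-sign term and the handling of the non-local perturbation need to be reconsidered.

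For $A_1$ I would simply replace the Case~1 lower bound on $\frac{d}{dt}\bigl(e^{2t}(\sigma(e^t)-\sigma(e^{t_k}))\bigr)$ by the analogous Case~2 bound from Proposition \ref{prop.sigma},(2), with constant $C_2$ in place of $C_1$, and run the identical Fefferman--Phong argument to obtain $A_1 \geq \tfrac{C_2}{2}\beta_k^{2/3}\normt{\chi_0 u}^2 - C\normt{D_t u}^2 - C\normt{u}^2$. The estimates \eqref{eq.A3} for $A_3$ and \eqref{eq.B+3}, \eqref{eq.B-3} for $B_3^\pm$ are copied verbatim. For the coercivity piece $B_1^\pm$ I would use \eqref{eq.case2sigma.2} (with constant $c_2$ in place of $c_1$), yielding the positive contributions $2c_2\beta_k^{2/3}\poscal{e^{2t}\sigma(e^{t_k})\chi_+^2 u}{u}$ and $2c_2\beta_k^{2/3}\poscal{e^{2t}\sigma(e^{t})\chi_-^2 u}{u}$.

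The genuine new point, and the main obstacle, is controlling the non-local contributions $A_2$ and $B_2^\pm$. In Case~1 these were tamed by the smallness of $\kappa(e^{t_k})$, a luxury unavailable once $e^{t_k}$ is bounded above. Instead, I would exploit the explicit formulas \eqref{eq.gamma.d}: since $\chi_0(t-t_k)$ confines $t$ to $[t_k-c_0,t_k+c_0]$ and $t_k$ lies in a fixed compact interval, the factors $|\gamma'(t)|$ and $|\gamma''(t)|$ are pointwise dominated by a constant multiple of $\gamma(t)$ on this window, with the constant depending only on $\epsilon_0,\epsilon_1$. Inserting these pointwise bounds into the Case~1 estimate \eqref{eq.A2'}, whose derivation used only symbolic calculus and is thus still valid here, collapses the three pieces into $C\beta_k^{2/3}k^{-2}\normt{\gamma u}^2 + Ck^{-2}\normt{u}^2$. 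The double-commutator computations for $B_2^\pm$ produce the same $k^{-2}\normt{\gamma u}^2$ remainder by the same reasoning. Summing $A+B^++B^-$ and recalling $\gamma(t)^2 = e^{4t}g(e^t)^2$ yields exactly \eqref{eq.case2.1}; the substitute for $\kappa(e^{t_k})$ supplied by the compactness of the admissible range of $t_k$ is what ensures the non-local perturbation appears only with the controllable coefficient $k^{-2}e^{4t}g(e^t)^2$, and will later be absorbed by taking $k\geq k_0$ large enough.
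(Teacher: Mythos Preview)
Your proposal is correct and follows the paper's own argument almost verbatim: same metric and multiplier, same decomposition $A+B^++B^-$, the Case~2 constants $C_2,c_2$ from Proposition~\ref{prop.sigma}(2) inserted into $A_1$ and $B_1^\pm$, and the key observation that on $\operatorname{supp}\chi_0(\cdot-t_k)$ with $t_k$ in a fixed compact interval one has $|\gamma'|,|\gamma''|\le C\gamma$, which collapses \eqref{eq.A2'} into $C\beta_k^{2/3}k^{-2}\normt{\gamma u}^2$. One small point: for $B_2^\pm$ the paper does not revisit the double-commutator refinement of Case~1 but simply uses the crude bound $\bigl|2\beta_k^{2/3}{\rm Re}\poscal{\chi_\pm\Dk\gamma u}{\chi_\pm\gamma u}\bigr|\le 2\beta_k^{2/3}k^{-2}\normt{\gamma u}^2$ via Cauchy--Schwarz and $\normt{\Dk}\le k^{-2}$, which directly yields the $e^{4t}g(e^t)^2$ remainder in \eqref{eq.case2.1}; your double-commutator route would produce $e^{4t}g(e^t)$ instead, still absorbable by $\rho(t,t_k)$ but not matching the stated proposition literally.
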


We have the following estimates for $\rho(t,t_k)$.
\begin{lem}\label{lem.rho2}
There exist $C_7,C_8>0$ such that for all $e^{t_k}\in[\epsilon_1,\epsilon_0^{-1}]$, $t\in\R$, 
\begin{equation}\label{eq.rho2.1}
\rho(t,t_k)\geq C_7e^{4t}g(e^t)^2,
\end{equation}
\begin{equation}\label{eq.rho2.2}
\rho(t,t_k)\geq C_8e^{2t},
\end{equation}
where $\rho(t,t_k)$ is given in \eqref{eq.rho2} and $g$ is given in \eqref{eq.sigma}.
\end{lem}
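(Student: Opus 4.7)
The plan is to exploit the fact that, in Case 2, $t_k$ is confined to the bounded interval $[\log\epsilon_1,\log\epsilon_0^{-1}]$, and to split $\R_t$ into the three standard regions determined by the supports of $\chi_0,\chi_+,\chi_-$ (defined in \eqref{eq.chi}). Since $\chi_0(t-t_k)^2+\chi_+(t-t_k)^2+\chi_-(t-t_k)^2=1$, on each region either a single $\chi_\pm$ is identically $1$ (with the others vanishing) or $|t-t_k|\leq c_0$ and $t$ itself lies in a fixed bounded set.

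Region I, $t-t_k\geq c_0$: here $\chi_+(t-t_k)=1$ and $\chi_0=\chi_-=0$, so $\rho(t,t_k)=e^{2t}\sigma(e^{t_k})$. Using that $\sigma$ is decreasing on $(0,+\infty)$ and $e^{t_k}\leq\epsilon_0^{-1}$, I get $\sigma(e^{t_k})\geq\sigma(\epsilon_0^{-1})>0$, hence $\rho(t,t_k)\geq c\,e^{2t}$.

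Region II, $t-t_k\leq -c_0$: here $\chi_-(t-t_k)=1$ and $\chi_0=\chi_+=0$, so $\rho(t,t_k)=e^{2t}\sigma(e^{t})$. Since $t\leq\log\epsilon_0^{-1}-c_0$, the same monotonicity gives $\sigma(e^{t})\geq\sigma(\epsilon_0^{-1}e^{-c_0})>0$, hence again $\rho(t,t_k)\geq c\,e^{2t}$.

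Region III, $|t-t_k|\leq c_0$: here $t\in[\log\epsilon_1-c_0,\log\epsilon_0^{-1}+c_0]$, so $e^{2t}$ is bounded above and below by positive constants depending only on $\epsilon_0,\epsilon_1,c_0$. On this bounded set each coefficient appearing in \eqref{eq.rho2} (namely $1$, $e^{2t}\sigma(e^{t_k})$ and $e^{2t}\sigma(e^{t})$) is bounded below by a positive constant, so using the partition of unity identity $\chi_0^2+\chi_+^2+\chi_-^2=1$ yields a uniform lower bound $\rho(t,t_k)\geq c>0$. Combined with the upper bound on $e^{2t}$ in this region, I also get $\rho(t,t_k)\geq c\,e^{2t}$.

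Inequality \eqref{eq.rho2.2} follows by taking the minimum of these three lower bounds. For \eqref{eq.rho2.1} I would then note that the function $r^2g(r)^2=r^2e^{-r^2/4}$ is globally bounded, so $e^{4t}g(e^t)^2\leq C\,e^{2t}$, and \eqref{eq.rho2.1} is an immediate consequence of \eqref{eq.rho2.2}. I do not expect any real obstacle: the whole point of Case 2 is that the boundedness of $t_k$ makes every pointwise estimate uniform in $k,\alpha$, and the only care needed is to remember the monotonicity of $\sigma$ and the global boundedness of $r^2 g(r)^2$.
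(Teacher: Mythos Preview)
Your proof is correct and follows essentially the same approach as the paper: both arguments split $\R_t$ according to the cutoffs $\chi_0,\chi_\pm$, use that $e^{t_k}\in[\epsilon_1,\epsilon_0^{-1}]$ forces $\sigma(e^{t_k})$ and (on the $\chi_-$ region) $\sigma(e^t)$ to be bounded below, and exploit that $t$ lies in a fixed compact set when $|t-t_k|\leq c_0$. Your derivation of \eqref{eq.rho2.1} as an immediate consequence of \eqref{eq.rho2.2} via the global bound on $r^2g(r)^2$ is a mild streamlining compared to the paper, which checks \eqref{eq.rho2.1} separately on each region (invoking \eqref{sigma.1} on the support of $\chi_-$).
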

\begin{proof}[Proof of Lemma \ref{lem.rho2}]
Indeed, we have for $e^{t_k}\in[\epsilon_1,\epsilon_0^{-1}]$, $\sigma(e^{t_k})\geq \delta$.
 If $t$ is in the support of $\chi_0(\cdot-t_k)$, we have $|t-t_k|\leq c_0$, then
$$e^{4t}g(e^t)^2\leq C,\quad e^{2t}\leq \epsilon_0^{-2}e^{2c_0}.$$
 If $t$ is in the support of $\chi_+(\cdot-t_k)$, we have $t\geq t_k+ c_0/2$,
 $$e^{4t}g(e^t)^2\leq Ce^{2t}\sigma(e^{t_k}),\quad e^{2t}\leq \delta^{-1}e^{2t}\sigma(e^{t_k})$$
  If $t$ is in the support of $\chi_-(\cdot-t_k)$, we have $t\leq t_k-c_0/2$, then $\sigma(e^t)\geq \delta$,
  $$ e^{4t}g(e^t)^2\leq 3e^{2t}\sigma(e^{t})\ \text{\small (by \eqref{sigma.1})},\quad e^{2t}\leq \delta^{-1}e^{2t}\sigma(e^t).$$
Thus \eqref{eq.rho2.1}, \eqref{eq.rho2.2} are proved.
\end{proof}

\begin{proof}[Proof of Theorem \ref{thm.case2}]
\eqref{eq.case2.1} and \eqref{eq.rho2.1} imply that there exists $k_0\geq1$, for $k\geq k_0$,
\begin{align}
2{\rm Re}\poscal{\SL_ku}{M_ku}&\geq \frac{c}{2}\beta_k^{2/3}\poscal{\rho(t,t_k)u}{u}
\label{eq.case2.2}-C\normt{D_tu}^2-Ck^2\normt{u}^2-C\normt{e^{2t}u}^2.
\end{align}
Hence by choosing $C_9>0$ large enough, we get for $k\geq k_0$ 
\begin{equation*}
{\rm Re}\poscal{\SL_ku}{(C_9+2M_k)u}\geq \frac{c}{2}\poscal{\Big(\beta_k^{2/3}\rho(t,t_k)+D_t^2+k^2+e^{4t}\Big)u}{u}.
\end{equation*}
and in particular by \eqref{eq.rho2.2}
$${\rm Re}\poscal{\SL_ku}{(C_9+2M_k)u}\geq C\poscal{\beta_k^{2/3}e^{2t}u}{u},\quad k\geq k_0.$$
Finally we obtain the inequality \eqref{est.case2} by using the $L^2(\R;dt)$-boundedness of the operator $e^{t}(C_9+M_k)e^{-t}$ and Cauchy-Schwarz inequality.
\end{proof}


\subsection{Nontrivial cases, continued}\label{sec.nontrivial.c}

\subsubsection{Case 3: $\beta_k^{-1/4}< e^{t_k}<\epsilon_1$}\label{sec.case3}
We present in Proposition \ref{prop.sigma},(3) the inequalities about the function $\sigma$ to be used in this case.
Moreover, we assume $\alpha_0\geq8\pi$ such that the interval $(\beta_k^{-1/4},\epsilon_1)$ is not empty for any $k\geq1$, $\alpha\geq\alpha_0$.

\begin{thm}\label{thm.case3}
Suppose $e^{t_k}\in(\beta_k^{-1/4},\epsilon_1)$. Then there exist $C>0$, $k_0\geq3$ such that for all $k\geq k_0$, $\alpha\geq \alpha_0$, $u\in C_0^\infty(\R)$,
\begin{equation}\label{est.case3}
\normt{e^{-t}\SL_ku}\geq C\beta_k^{1/2}\normt{e^tu},
\end{equation}
where $\SL_k$ is given in \eqref{eq.lk2} and $\beta_k$ is given in \eqref{eq.beta}.
\end{thm}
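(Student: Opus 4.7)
The approach parallels that of Theorem \ref{thm.case1}, but with a H\"ormander metric and a multiplier rescaled to the natural scales of this regime. The inequalities provided by Proposition \ref{prop.sigma},(3) should show that the slope of the change-of-sign symbol $e^{2t}(\sigma(e^t)-\sigma(e^{t_k}))$ at $t=t_k$ is now of order $e^{4t_k}$ rather than $O(1)$ as in Case 1. Consequently, the correct subelliptic tuning of $D_t^2+i\beta_ke^{2t}(\sigma(e^t)-\sigma(e^{t_k}))$ requires the localization scales $|t-t_k|\sim(\beta_ke^{4t_k})^{-1/3}$ and $|\tau|\sim(\beta_ke^{4t_k})^{1/3}$; their product is exactly $1$, so the Heisenberg uncertainty principle is borderline, which is precisely why the hypothesis $e^{t_k}>\beta_k^{-1/4}$ (equivalent to $\beta_ke^{4t_k}>1$) is needed both to make the metric admissible with Planck function uniformly less than $1$ and to deliver the targeted gain $\beta_k^{1/2}$.

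Concretely, I would replace the metric of Definition \ref{def.metric} by a variant of the form
$$
\Gamma=(\beta_ke^{4t_k})^{2/3}|dt|^2+\frac{|d\tau|^2}{\tau^2+(\beta_ke^{4t_k})^{2/3}},
$$
verify its uniform admissibility in the spirit of Lemma \ref{lem.metric}, and define a multiplier $M_k=m_{0,k}^w+m_{+,k}^w+m_{-,k}^w$ following Definition \ref{def.multiplier} with $\beta_k^{1/3}$ replaced throughout by $(\beta_ke^{4t_k})^{1/3}$ and with the cutoffs $\chi_0,\chi_{\pm}$ composed with the dilation $t\mapsto(\beta_ke^{4t_k})^{1/3}(t-t_k)$. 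The four-step decomposition $A_1,A_2,A_3,B^{\pm}$ of Proposition \ref{prop.case1} can then be carried out with essentially the same algebra: the Fefferman--Phong inequality applied to the rescaled Poisson bracket $\{\psi((\beta_ke^{4t_k})^{-1/3}\tau),\beta_ke^{2t}(\sigma(e^t)-\sigma(e^{t_k}))\}$ produces a localized lower bound of order $(\beta_ke^{4t_k})^{2/3}\|\chi_0u\|^2$ near $t_k$, while the pieces from $m_{\pm,k}^w$ contribute analogous terms on the supports of $\chi_{\pm}$. Using that $\sigma(e^t)\geq c>0$ for $e^t\leq\epsilon_1$, a Case-3 analogue of Lemma \ref{lem.rho1} then yields
$$
(\beta_ke^{4t_k})^{2/3}\rho(t,t_k)+e^{4t}\geq C\beta_k^{1/2}e^{2t},
$$
the arithmetic being exactly $\beta_k^{2/3}e^{8t_k/3}\geq\beta_k^{1/2}e^{2t_k}\iff e^{t_k}\geq\beta_k^{-1/4}$. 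Cauchy--Schwarz and the $L^2$-boundedness of $e^tM_ke^{-t}$ then deliver \eqref{est.case3}, exactly as at the end of the proof of Theorem \ref{thm.case1}.

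The main obstacle will be the nonlocal term $-i\beta_k\gamma\Dk\gamma$. Unlike Case 1, where the factor $\gamma(t_k)$ was exponentially small and absorbed the coefficient $\beta_k$, here $\gamma(t_k)\sim e^{2t_k}$ is only polynomially small in $\beta_k^{-1}$; moreover, on the new shrinking scale $L=(\beta_ke^{4t_k})^{-1/3}$ of the cutoff $\chi_0$, the derivatives $\gamma',\gamma''$ stay of the same order as $\gamma$ itself, so the commutator estimates for $[\gamma,\psi((\beta_ke^{4t_k})^{-1/3}D_t)]$ and for the double commutators $[\chi_{\pm},[\chi_{\pm},\Dk]]$ must be redone with careful bookkeeping of the new scales. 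The tight accounting shows that this perturbation is controlled by the main subelliptic term only after imposing $k\geq k_0$ with $k_0\geq 3$, which is the origin of the stronger threshold $k_0\geq 3$ appearing in the statement, as opposed to $k_0\geq 1$ in Cases 1 and 2.
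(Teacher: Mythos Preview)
Your heuristics are right in two places: the subelliptic scale in $\tau$ is indeed $(\beta_ke^{4t_k})^{1/3}$, and the arithmetic $(\beta_ke^{4t_k})^{2/3}\geq\beta_k^{1/2}e^{2t_k}\Longleftrightarrow e^{t_k}\geq\beta_k^{-1/4}$ is exactly what makes the $\chi_0$ region work. But the concrete scheme you outline departs from the paper in ways that break the argument.

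\textbf{The $t$-cutoff is \emph{not} rescaled.} The paper keeps the metric $\Gamma=|dt|^2+|d\tau|^2/(\tau^2+(\beta_ke^{4t_k})^{2/3})$ and the cutoffs $\chi_0(t-t_k)$, $\chi_\pm(t-t_k)$ at the fixed unit scale $c_0$; only the Fourier multiplier $\psi$ is dilated. Your proposal to compose $\chi_0,\chi_\pm$ with $t\mapsto(\beta_ke^{4t_k})^{1/3}(t-t_k)$ forces $\chi_\pm'$ to be of size $(\beta_ke^{4t_k})^{1/3}$, so the commutator $[D_t^2,\chi_\pm^2]$ in $B_3^\pm$ produces an error of order $(\beta_ke^{4t_k})^{1/3}\|u\|^2$, which cannot be absorbed by $k^2\|u\|^2$ uniformly in $\alpha$. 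Worse, on the shrinking support of $\chi_+$ one has $\sigma(e^{t_k})-\sigma(e^t)\sim e^{2t_k}(t-t_k)$, which is only of order $e^{2t_k}(\beta_ke^{4t_k})^{-1/3}$ near the edge, so the Case-1 sign inequality $\sigma(e^{t_k})-\sigma(e^t)\geq c_1\sigma(e^{t_k})$ is simply false there. The paper can invoke Proposition~\ref{prop.sigma},(3) precisely because the cutoffs stay at scale $c_0$.

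\textbf{The final lower bound uses $k^2$, not $e^{4t}$.} Your claimed analogue of Lemma~\ref{lem.rho1},
$(\beta_ke^{4t_k})^{2/3}\rho(t,t_k)+e^{4t}\geq C\beta_k^{1/2}e^{2t}$,
fails on the $\chi_-$ region as $t\to-\infty$: there $e^{4t}\to0$, so one would need $(\beta_ke^{4t_k})^{2/3}\geq C\beta_k^{1/2}$, i.e.\ $e^{t_k}\geq C\beta_k^{-1/16}$, which is strictly stronger than the hypothesis $e^{t_k}>\beta_k^{-1/4}$. The paper's Lemma~\ref{lem.rho3} instead combines the multiplier gain $\beta_k(\beta_ke^{4t_k})^{-1/3}\widetilde\rho(t,t_k)$ with the $k^2$ coming from the real part of $\SL_k$, via a $\frac34$--$\frac14$ Young inequality; and the weight $\widetilde\rho$ (see \eqref{eq.rho3}) is built from $1-\sigma$ rather than $\sigma$, in accordance with \eqref{eq.case3sigma.2}. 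Note also that on the $\chi_\pm$ regions the gain carries the coefficient $\beta_k(\beta_ke^{4t_k})^{-1/3}=\beta_k^{2/3}e^{-4t_k/3}$, not $(\beta_ke^{4t_k})^{2/3}=\beta_k^{2/3}e^{8t_k/3}$; the two differ by the large factor $e^{-4t_k}$.

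Finally, the precise source of $k_0\geq3$ is Lemma~\ref{lem.Dk3}: in estimating $A_{22}$ one needs $\|\chi_0\Dk\gamma u\|\leq Ce^{2t_k}k^{-2}\|g(e^t)u\|$, which requires bounding $e^{-2t}\Dk e^{2t}$ on $L^2$, and that operator is bounded only for $k\geq3$.
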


\medskip

We shall modify the metric $\Gamma$ and the multiplier $M_k$ as follows.
\begin{defi}\label{def.case3}
\begin{equation}
\Gamma=|dt|^2+\frac{|d\tau|^2}{\tau^2+(\beta_ke^{4t_k})^{2/3}},\quad (t,\tau)\in \R_t\times\R_\tau,
\end{equation}
\begin{equation}
M_k=m_{0,k}^w+m_{+,k}^w+m_{-,k}^w,
\end{equation}
where
\begin{align*}
&m_{0,k}(t,\tau)=\chi_0(t-t_k)\sharp\psi\big((\beta_ke^{4t_k})^{-1/3}\tau\big)\sharp\chi_0(t-t_k),\\
&m_{+,k}(t,\tau)=-i(\beta_ke^{4t_k})^{-1/3}\chi_+(t-t_k)^2,\\
&m_{-,k}(t,\tau)=i(\beta_ke^{4t_k})^{-1/3}\chi_-(t-t_k)^2,
\end{align*}
with $\chi_0$, $\chi_\pm$, $\psi$ given in \eqref{eq.chi}, \eqref{eq.psi}.
\end{defi}
\begin{rem}\rm
Since we are in the region $e^{t_k}\in (\beta_k^{-1/4},\epsilon_1)$, we have
\begin{equation}\label{eq.lambda3}
\lambda_\Gamma=\big(\tau^2+(\beta_ke^{4t_k})^{2/3}\big)^{1/2}\geq (\beta_ke^{4t_k})^{1/3}\geq1,
\end{equation}
so that the metric $\Gamma$ verifies the uncertainty principle and moreover, $\Gamma$ is uniformly admissible (see Lemma \ref{lem.metric}).
Furthermore, the operator $M_k$ is bounded on $L^2(\R;dt)$.
\end{rem}

\begin{prop}\label{prop.case3}
Suppose $e^{t_k}\in(\beta_k^{-1/4},\epsilon_1)$. There exist $c,C>0$ such that for all $k\geq3$, $\alpha\geq\alpha_0$, $u\in C_0^\infty(\R)$,
\begin{align}
\notag2{\rm Re}\poscal{\SL_ku}{M_ku}&\geq \beta_k(\beta_ke^{4t_k})^{-1/3}\poscal{\Big(c\widetilde\rho(t,t_k)-Ck^{-2}e^{4t}g(e^t)^2\Big)u}{u}\\
\label{eq.prop.case3}&\qquad-C\normt{D_tu}^2-Ck^2\normt{u}^2-C\normt{e^{2t}u}^2,
\end{align}
where $\SL_k$ is given in \eqref{eq.lk2}, $M_k$ in Definition \ref{def.case3}, $g$ in \eqref{eq.sigma}, $\beta_k$ in \eqref{eq.beta} and
\begin{equation}\label{eq.rho3}
\widetilde\rho(t,t_k)=e^{4t_k}\chi_0(t-t_k)^2+e^{2t}\big(1-\sigma(e^{t})\big)\chi_+(t-t_k)^2+e^{2t}\big(1-\sigma(e^{t_k})\big)\chi_-(t-t_k)^2,
\end{equation}
with $\chi_0,\chi_\pm$ defined in \eqref{eq.chi} and $\sigma$ given in \eqref{eq.sigma}.
\end{prop}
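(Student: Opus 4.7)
The strategy mirrors that of Propositions \ref{prop.case1} and \ref{prop.case2}, with every occurrence of $\beta_k^{2/3}$ replaced by the rescaled frequency weight $(\beta_ke^{4t_k})^{2/3}=\lambda_\Gamma^2$ coming from the new metric of Definition \ref{def.case3}. I would write
\begin{equation*}
2{\rm Re}\poscal{\SL_ku}{M_ku}=A+B^++B^-,\qquad A=A_1+A_2+A_3,
\end{equation*}
with $A_j$, $B^{\pm}_j$ exactly as in \eqref{def.A}, \eqref{def.B+}, \eqref{def.B-} but with $\psi(\beta_k^{-1/3}D_t)$ replaced by $\psi((\beta_ke^{4t_k})^{-1/3}D_t)$ and the $\chi_{\pm}^2$ terms weighted by $(\beta_ke^{4t_k})^{-1/3}$.

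\textbf{Subelliptic contribution $A_1$.} Since $\widetilde\chi_0e^{2t}(\sigma(e^t)-\sigma(e^{t_k}))\in S(\beta_ke^{4t_k},\Gamma)$ (this is the scaling fact that should appear in Proposition \ref{prop.sigma}(3) for $e^{t_k}<\epsilon_1$, reflecting $\frac{d}{dt}\bigl(e^{2t}(\sigma(e^t)-\sigma(e^{t_k}))\bigr)\big|_{t=t_k}\sim e^{4t_k}$), the commutator expansion in Weyl calculus gives
\begin{equation*}
\bigl[\psi((\beta_ke^{4t_k})^{-1/3}D_t),\,i\beta_k\widetilde\chi_0 e^{2t}(\sigma(e^t)-\sigma(e^{t_k}))\bigr]=b_1^w+r_1^w,
\end{equation*}
with $b_1\in S(\lambda_\Gamma^2,\Gamma)$ and $r_1\in S(1,\Gamma)$, and with the Poisson bracket $b_1\geq \frac{c}{2}(\beta_ke^{4t_k})^{2/3}$ on the box $\{|t-t_k|\leq 2c_0,\ |\tau|\leq(\beta_ke^{4t_k})^{1/3}\}$ thanks to $\psi'=-\frac12$ on $[-1,1]$. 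Fefferman-Phong as in Case 1 then yields
\begin{equation*}
A_1\geq c\,(\beta_ke^{4t_k})^{2/3}\normt{\chi_0u}^2-C\normt{D_tu}^2-C\normt{u}^2,
\end{equation*}
which is exactly the $e^{4t_k}\chi_0^2$ term of $\widetilde\rho$ once we pull out the prefactor $\beta_k(\beta_ke^{4t_k})^{-1/3}$. The remainder $A_3$ is handled verbatim as in Case 1, bounded by $C\normt{D_tu}^2+Ck^2\normt{u}^2+C\normt{e^{2t}u}^2$.

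\textbf{The two boundary contributions $B^{\pm}_1$.} On $\mathrm{supp}\,\chi_+(\cdot-t_k)$ the inequality (to be extracted from Proposition \ref{prop.sigma}(3))
\[
\sigma(e^{t_k})-\sigma(e^t)\geq c\bigl(1-\sigma(e^t)\bigr),\qquad t\geq t_k+c_0/2,
\]
gives immediately $B^+_1\geq c\,\beta_k(\beta_ke^{4t_k})^{-1/3}\langle e^{2t}(1-\sigma(e^t))\chi_+^2u,u\rangle$. Symmetrically, for $t\leq t_k-c_0/2$ one has $\sigma(e^t)-\sigma(e^{t_k})\geq c(1-\sigma(e^{t_k}))$ (this is the estimate $\frac18(e^{2t_k}-e^{2t})\gtrsim e^{2t_k}\simeq 1-\sigma(e^{t_k})$ for small $e^{t_k}$), producing the $(1-\sigma(e^{t_k}))\chi_-^2$ piece of $\widetilde\rho$. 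The remainders $B^{\pm}_3$ from the double commutator with $D_t^2$ are again bounded by $C\normt{D_tu}^2+C\normt{u}^2$.

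\textbf{The nonlocal perturbation.} This is the delicate piece. Following the bookkeeping of Proposition \ref{prop.case1}, $A_2$ decomposes into $A_{21}+A_{22}$; for $A_{21}$ one uses the factorization $\psi=-e(\cdot)(\cdot)$ so that $[\Dk,\chi_0]D_t$ is bounded by $Ck^{-2}$, picking up the prefactor $\beta_k(\beta_ke^{4t_k})^{-1/3}$; for $A_{22}$ one develops $[\gamma,\psi((\beta_ke^{4t_k})^{-1/3}D_t)]$ by Weyl calculus for the new metric $\Gamma$ and estimates each term in $S((\beta_ke^{4t_k})^{-j/3},\Gamma)$. The double-commutator trick used for $B^{\pm}_2$ in Proposition \ref{prop.case1} carries over unchanged. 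All these terms combine into
\begin{equation*}
C\beta_k(\beta_ke^{4t_k})^{-1/3}k^{-2}\normt{\chi_0\gamma u}^2+\text{similar pieces},
\end{equation*}
which is controlled by the stated $Ck^{-2}e^{4t}g(e^t)^2$ term inside $\widetilde\rho$'s bracket (using $\gamma(t)=e^{2t}g(e^t)$).

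\textbf{Main obstacle.} The hardest point is verifying that the Poisson-bracket lower bound holds at the sharp scale $(\beta_ke^{4t_k})^{2/3}$: one must show that the relevant $\sigma$-derivatives behave like $e^{4t_k}$ uniformly for $e^{t_k}\in(\beta_k^{-1/4},\epsilon_1)$, and that $\Gamma$ satisfies the uncertainty principle, which is exactly where the condition $e^{t_k}>\beta_k^{-1/4}$ enters through \eqref{eq.lambda3}. Once these two facts are in hand (they should come from Proposition \ref{prop.sigma}(3) and Lemma \ref{lem.metric}), the rest of the proof is a parallel reading of Propositions \ref{prop.case1} and \ref{prop.case2} with the substitution $\beta_k\leadsto\beta_ke^{4t_k}$.
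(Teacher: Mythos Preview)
Your overall architecture is correct and matches the paper's proof: decompose into $A_1+A_2+A_3+B^++B^-$, use Fefferman--Phong at scale $(\beta_ke^{4t_k})^{2/3}$ for $A_1$, and use Proposition~\ref{prop.sigma}(3) for $B^{\pm}_1$. However, your treatment of the nonlocal term $A_{22}$ contains a genuine gap, and it is precisely the point that distinguishes Case~3 from Cases~1 and~2.

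You write that the commutator $[\gamma,\psi((\beta_ke^{4t_k})^{-1/3}D_t)]$ has terms in $S((\beta_ke^{4t_k})^{-j/3},\Gamma)$ and that ``the rest of the proof is a parallel reading \dots\ with the substitution $\beta_k\leadsto\beta_ke^{4t_k}$.'' But with this naive bookkeeping, the three-term Weyl expansion leaves a remainder in $S((\beta_ke^{4t_k})^{-1},\Gamma)$, and pairing it against $\beta_k\|\Dk\gamma u\|\leq\beta_kk^{-2}\|\gamma u\|$ produces a term of order $e^{-4t_k}k^{-2}\|\gamma u\|\|\chi_0u\|$. Since $e^{t_k}$ may be as small as $\beta_k^{-1/4}$, this is of size $\beta_k k^{-2}$, which cannot be absorbed. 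The paper closes this gap with two extra ingredients that have no analogue in Case~1: first, one localises and uses the sharper class $\widetilde\chi_0\gamma\in S(e^{2t_k},\Gamma)$ (not $S(1,\Gamma)$), so that the remainder lands in $S(e^{2t_k}(\beta_ke^{4t_k})^{-1},\Gamma)$; second, one replaces the crude bound $\|\Dk\gamma u\|\leq k^{-2}\|\gamma u\|$ by
\[
\|\chi_0\Dk\gamma u\|=\|\chi_0e^{2t}\,(e^{-2t}\Dk e^{2t})\,g(e^t)u\|\leq Ce^{2t_k}k^{-2}\|g(e^t)u\|,
\]
invoking Lemma~\ref{lem.Dk3}, which is exactly where the hypothesis $k\geq3$ enters the statement. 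Together these two $e^{2t_k}$ gains cancel the dangerous $e^{-4t_k}$ and reduce the remainder to $Ck^{-2}\|u\|^2$. Without flagging this step, your ``parallel reading'' would not go through.
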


\smallskip

\noindent{\it Proof of Proposition \ref{prop.case3}.}

\medskip

\noindent{\it Estimates for $2{\rm Re}\poscal{\SL_ku}{m_{0,k}^wu}$.}
\begin{align}
\notag A:=2\text{Re}\poscal{\SL_ku}{m_{0,k}^wu}&=2{\rm Re}\poscal{i\beta_ke^{2t}\big(\sigma(e^t)-\sigma(e^{t_k})\big)u}{m_{0,k}^wu}\\
\notag&\quad-2{\rm Re}\poscal{i\beta_k\gamma\Dk\gamma u}{m_{0,k}^wu}\\
\notag&\quad+2{\rm Re}\poscal{(D_t^2+k^2+\frac{1}{16}e^{4t})u}{m_{0,k}^wu}\\
\label{def.A.case3}&=:A_1+A_2+A_3.
\end{align}
For $A_1$ in \eqref{def.A.case3}, we get a commutator
\begin{align*}
A_1&=2{\rm Re}\poscal{i\beta_ke^{2t}\big(\sigma(e^t)-\sigma(e^{t_k})\big)u}{\chi_0\psi\big((\beta_ke^{4t_k})^{-1/3}D_t\big)\chi_0u}\\
&=\poscal{\Big[\psi\big((\beta_ke^{4t_k})^{-1/3}D_t\big),i\beta_k\widetilde\chi_0e^{2t}\big(\sigma(e^t)-\sigma(e^{t_k})\big)\Big]\chi_0u}{\chi_0u},
\end{align*}
where $\widetilde\chi_0$ is given in \eqref{eq.tilde.chi}. 
We know that, with $\Gamma$ given in Definition \ref{def.case3}
$$\Big[\psi\big((\beta_ke^{4t_k})^{-1/3}D_t\big),\ \underbrace{i\beta_k\widetilde\chi_0e^{2t}\big(\sigma(e^t)-\sigma(e^{t_k})\big)}_{\in S(\beta_ke^{4t_k},\Gamma)\text{ by \eqref{eq.case3sigma.1}}}\Big]=b_1^w+r_1^w,$$
where $b_1$ is a Poisson bracket and $r_1\in S(\beta_ke^{4t_k}\lambda_\Gamma^{-3},\Gamma)\subset S(1,\Gamma)$, with $\lambda_\Gamma$ given in \eqref{eq.lambda3} (see \eqref{eq.r}). More precisely,
\begin{align*}
b_1(t,\tau)&=\frac{1}{i}\Big\{\psi\big((\beta_ke^{4t_k})^{-1/3}\tau\big),i\beta_k\widetilde\chi_0e^{2t}\big(\sigma(e^t)-\sigma(e^{t_k})\big)\Big\}\\
&=\beta_k(\beta_ke^{4t_k})^{-1/3}\psi'\big((\beta_ke^{4t_k})^{-1/3}\tau\big)\frac{d}{dt}\Big(\widetilde\chi_0e^{2t}\big(\sigma(e^t)-\sigma(e^{t_k})\big)\Big)\\
&\quad\in S((\beta_ke^{4t_k})^{2/3},\Gamma)\subset S(\lambda_\Gamma^2,\Gamma).
\end{align*}
By \eqref{eq.tilde.chi}, \eqref{eq.psi} and \eqref{eq.case3sigma}, we have in the zone $\{|t-t_k|\leq 2c_0,|\tau|\leq(\beta_ke^{4t_k})^{1/3}\}$
\begin{align}
\notag b_1(t,\tau)&=\beta_k(\beta_ke^{4t_k})^{-1/3}\psi'\big((\beta_ke^{4t_k})^{-1/3}\tau\big)\frac{d}{dt}\Big(e^{2t}\big(\sigma(e^t)-\sigma(e^{t_k})\big)\Big)\\
\label{eq.b1.case3}&\geq \beta_k(\beta_ke^{4t_k})^{-1/3}\times\frac{1}{2}C_3 e^{4t_k}\geq \frac{C_3}{2}(\beta_ke^{4t_k})^{2/3}.
\end{align}
This implies for all $t,\tau\in\R$,
\begin{equation}\label{eq.fp.case3}
\frac{C_3}{2}(\beta_ke^{4t_k})^{2/3}\leq b_1(t,\tau)+\frac{C_3}{2}\tau^2+\tilde C_3(\beta_ke^{4t_k})^{2/3}\Big(1-\widetilde\chi_0\big(2(t-t_k)\big)\Big)\in S(\lambda_\Gamma^2,\Gamma),
\end{equation}
where $\tilde C_3=2\|b_1\|_{0,S((\beta_ke^{4t_k})^{2/3},\Gamma)}$.
Indeed, the function 
$$b_1(t,\tau)+\tilde C_3(\beta_ke^{4t_k})^{2/3}\Big(1-\widetilde\chi_0\big(2(t-t_k)\big)\Big)\geq \frac{C_3}{2}(\beta_ke^{4t_k})^{2/3}$$
for all $t\in\R$ and $|\tau|\leq(\beta_ke^{4t_k})^{1/3}$, 
and it is non-negative for all $t,\tau\in\R$ ; if $|\tau|\geq(\beta_ke^{4t_k})^{1/3}$, then $\tau^2\geq(\beta_ke^{4t_k})^{2/3}$, which proves the inequality in \eqref{eq.fp.case3}. Moreover, each term in the right hand side of \eqref{eq.fp.case3} is in $S(\lambda_\Gamma^2,\Gamma)$.
The Fefferman-Phong inequality (Proposition \ref{prop.fp}) implies
$$b_1(t,\tau)^w+\frac{C_3}{2}D_t^2+\tilde C_3(\beta_ke^{4t_k})^{2/3}\Big(1-\widetilde\chi_0\big(2(t-t_k)\big)\Big)\geq \frac{C_3}{2}(\beta_ke^{4t_k})^{2/3}-C'.$$
Applying to $\chi_0u$, we get
\begin{align*}
A_1+\frac{C_3}{2}\poscal{D_t^2\chi_0u}{\chi_0u}&=\poscal{\big(\frac{C_3}{2}D_t^2+b_1^w\big)\chi_0u}{\chi_0u}+\poscal{r_1^w\chi_0u}{\chi_0u}\\
&\geq \frac{C_3}{2}(\beta_ke^{4t_k})^{2/3}\normt{\chi_0u}^2-C''\normt{\chi_0u}^2.
\end{align*}
Hence we get the estimate for $A_1$:
\begin{align}
\label{eq.A1.case3}A_1\geq \frac{C_3}{2}(\beta_ke^{4t_k})^{2/3}\normt{\chi_0u}^2-C\normt{D_tu}^2-C\normt{u}^2.
\end{align}

For $A_2$ defined in \eqref{def.A.case3}, we have
\begin{align}
\notag A_2&=-2{\rm Re}\poscal{i\beta_k\gamma\Dk\gamma u}{m_{0,k}^wu}\\
\notag&=-2{\rm Re}\poscal{i\beta_k \Dk\gamma u}{m_{0,k}^w\gamma u}
-2{\rm Re}\poscal{i\beta_k \Dk \gamma u}{\big[\gamma,m_{0,k}^w\big]u}\\
\label{def.A2.case3}&=:A_{21}+A_{22}.
\end{align}
For $A_{21}$ in \eqref{def.A2.case3}, since $i\Dk$ is skew-adjoint and $m_{0,k}^w$ is self-adjoint, we get
\begin{equation*}
A_{21}=i\beta_k\poscal{\big[\Dk,m_{0,k}^w\big]\gamma u}{\gamma u}.
\end{equation*}
Noting that $\Dk$ commutes with $\psi((\beta_ke^{4t_k})^{-1/3}D_t)$, we have
\begin{align*}
\notag&\big[\Dk,m_{0,k}^w\big]=\big[\Dk,\chi_0\psi\big((\beta_ke^{4t_k})^{-1/3}D_t\big)\chi_0\big]\\
&=\big[\Dk,\chi_0\big]\psi\big((\beta_ke^{4t_k})^{-1/3}D_t\big)\chi_0+\chi_0\psi\big((\beta_ke^{4t_k})^{-1/3}D_t\big)\big[\Dk,\chi_0\big].
\end{align*}
By using the method that is used in Case 1, we can get 
\begin{align*}
\big|\poscal{\big[\Dk,\chi_0\big]\psi\big((\beta_ke^{4t_k})^{-1/3}D_t\big)\chi_0\gamma u}{\gamma u}\big|\leq C(\beta_ke^{4t_k})^{-1/3}k^{-2}\normt{\chi_0\gamma u}\normt{\gamma u},\\
\big|\poscal{\chi_0\psi\big((\beta_ke^{4t_k})^{-1/3}D_t\big)\big[\Dk,\chi_0\big]\gamma u}{\gamma u}\big|\leq C(\beta_ke^{4t_k})^{-1/3}k^{-2}\normt{\gamma u}\normt{\chi_0\gamma u},
\end{align*}
so that 
\begin{equation}\label{eq.A21.case3}
|A_{21}|\leq C\beta_k(\beta_ke^{4t_k})^{-1/3}k^{-2}\normt{\chi_0\gamma u}\normt{\gamma u}.
\end{equation}
For $A_{22}$ in \eqref{def.A2.case3}, we have
\begin{align*}
A_{22}&=-2{\rm Re}\poscal{i\beta_k \Dk\gamma u}{\big[\gamma ,m_{0,k}^w\big]u}\\
&=-2{\rm Re}\poscal{i\beta_k \Dk \gamma u}{\chi_0\Big[\widetilde\chi_0\gamma,\psi\big((\beta_ke^{4t_k})^{-1/3}D_t\big)\Big]\chi_0u},
\end{align*}
where $\tilde\chi_0$ is given in \eqref{eq.tilde.chi}.
Since $\widetilde\chi_0\gamma=\widetilde\chi_0(t-t_k)e^{2t}g(e^t)\in S(e^{2t_k},\Gamma)$, we get
$$\Big[\widetilde\chi_0\gamma,\psi\big((\beta_ke^{4t_k})^{-1/3}D_t\big)\Big]=b_2^w+r_2^w,$$
where $b_2\in S(e^{2t_k}\lambda_\Gamma^{-1},\Gamma)$ is a Poisson bracket and $r_2$ belongs to $S(e^{2t_k}\lambda_\Gamma^{-3},\Gamma)\subset S(e^{2t_k}(\beta_ke^{4t_k})^{-1},\Gamma)$, with $\lambda_\Gamma$ given in \eqref{eq.lambda3} (see \eqref{eq.r}). We compute $b_2$ as follows
\begin{align*}
b_2&=\frac{1}{i}\Big\{\widetilde\chi_0\gamma,\psi\big((\beta_ke^{4t_k})^{-1/3}\tau\big)\Big\}\\
&=-\frac{1}{i}(\beta_ke^{4t_k})^{-1/3}\psi'\big((\beta_ke^{4t_k})^{-1/3}\tau\big)(\widetilde\chi_0\gamma)'(t)\quad\in S(e^{2t_k}(\beta_ke^{4t_k})^{-1/3},\Gamma)\\
&=-\frac{1}{i}(\beta_ke^{4t_k})^{-1/3}\psi'\big((\beta_ke^{4t_k})^{-1/3}\tau\big)\sharp (\widetilde\chi_0\gamma)'(t)+b_3+r_3,
\end{align*}
where $b_3\in S(e^{2t_k}(\beta_ke^{4t_k})^{-2/3},\Gamma)$ is a Poisson bracket and $r_3\in S(e^{2t_k}(\beta_ke^{4t_k})^{-1},\Gamma)$. We continue to expand $b_3$
\begin{align*}
b_3&=-\frac{1}{2i}\Big\{-\frac{1}{i}(\beta_ke^{4t_k})^{-1/3}\psi'\big((\beta_ke^{4t_k})^{-1/3}\tau\big), (\widetilde\chi_0\gamma)'(t)\Big\}\\
&=-\frac{1}{2}(\beta_ke^{4t_k})^{-2/3}\psi''\big((\beta_ke^{4t_k})^{-1/3}\tau\big)(\widetilde\chi_0\gamma)''(t)\\
&=-\frac{1}{2}(\beta_ke^{4t_k})^{-2/3}\psi''\big((\beta_ke^{4t_k})^{-1/3}\tau\big)\sharp(\widetilde\chi_0\gamma)''(t)+r_4,
\end{align*}
where $r_4\in S(e^{2t_k}(\beta_ke^{4t_k})^{-1},\Gamma)$. 
Thus we get for $w\in C_0^\infty(\R)$,
\begin{align*}
\big[\widetilde\chi_0\gamma,&\psi\big((\beta_ke^{4t_k})^{-1/3}D_t\big)\big]w
= -\frac{1}{i}(\beta_ke^{4t_k})^{-1/3}\psi'\big((\beta_ke^{4t_k})^{-1/3}D_t\big)(\widetilde\chi_0\gamma)'(t) w\\
& \quad-\frac{1}{2}(\beta_ke^{4t_k})^{-2/3}\psi''\big((\beta_ke^{4t_k})^{-1/3}D_t\big)(\widetilde\chi_0\gamma)''(t) w+(r_2^w+r_3^w+r_4^w)w,
\end{align*}
where $r_2,r_3,r_4\in S(e^{2t_k}(\beta_ke^{4t_k})^{-1},\Gamma)$.
Using the boundedness of $\psi'$ and $\psi''$, we obtain for $w\in C_0^\infty(\R)$,
\begin{align*}
\normt{\big[\widetilde\chi_0\gamma,&\psi\big((\beta_ke^{4t_k})^{-1/3}D_t\big)\big]w}
\leq C(\beta_ke^{4t_k})^{-1/3}\normt{(\widetilde\chi_0\gamma)'w}\\
&+C(\beta_ke^{4t_k})^{-2/3}\normt{(\widetilde\chi_0\gamma)''w}+ Ce^{2t_k}(\beta_ke^{4t_k})^{-1}\normt{w}.
\end{align*}
Now the term $A_{22}$ defined in \eqref{def.A2.case3} can be estimated as follows:
\begin{align}
\notag\big|A_{22}\big|&=\big|2{\rm Re}\poscal{i\beta_k \chi_0\Dk\gamma u}{\Big[\widetilde\chi_0\gamma,\psi\big((\beta_ke^{4t_k})^{-1/3}D_t\big)\Big]\chi_0u}\big|\\
\notag&\leq 2\beta_k\normt{\chi_0\Dk\gamma u}\normt{\Big[\widetilde\chi_0\gamma,\psi\big((\beta_ke^{4t_k})^{-1/3}D_t\big)\Big]\chi_0u}\\
\notag&\leq 2\beta_k\normt{\chi_0\Dk\gamma u}\times\Big(C(\beta_ke^{4t_k})^{-1/3}\normt{(\widetilde\chi_0\gamma)'\chi_0u}\\
\notag&\qquad+C(\beta_ke^{4t_k})^{-2/3}\normt{(\widetilde\chi_0\gamma)''\chi_0u}+ Ce^{2t_k}(\beta_ke^{4t_k})^{-1}\normt{\chi_0u}\Big)\\
\notag&\leq C\beta_k(\beta_ke^{4t_k})^{-1/3}k^{-2}\normt{\gamma u}\normt{\gamma'\chi_0u}+C\beta_k(\beta_ke^{4t_k})^{-2/3}k^{-2}\normt{\gamma u}\normt{\gamma''\chi_0u}\\
\label{eq.A22.case3}&\qquad+Ck^{-2}\normt{g(e^t) u} \normt{\chi_0u},
\end{align}
where in the last inequality we use the following 
$$\normt{\chi_0\Dk\gamma u}=\normt{\chi_0e^{2t}\underbrace{e^{-2t}\Dk e^{2t}}_{\text{has norm $\leq 3k^{-2}$}\atop\text{since $k\geq3$}}g(e^t)u}\leq Ce^{2t_k}k^{-2}\normt{g(e^t)u}$$
(see Lemma \ref{lem.Dk3}). 

It follows from \eqref{def.A2.case3}, \eqref{eq.A21.case3} and \eqref{eq.A22.case3} that
\begin{align}
\notag\big|A_2\big|&\leq C\beta_k(\beta_ke^{4t_k})^{-1/3}k^{-2} \normt{\gamma u}\big(\normt{\chi_0\gamma u}+\normt{\chi_0\gamma' u}\big)\\
&\quad+C\beta_k(\beta_ke^{4t_k})^{-2/3}k^{-2}\normt{\gamma u}\normt{\chi_0\gamma''u}
\label{eq.A2.case3'}+Ck^{-2}\normt{u}^2.
\end{align}
From \eqref{eq.gamma.d} we deduce that for $e^{t_k}<\epsilon_1$,
$$|\chi_0(t-t_k)\gamma'(t)|\leq C\gamma(t),\quad |\chi_0(t-t_k)\gamma''(t)|\leq C\gamma(t),$$
with $C$ depending only on $\epsilon_1$, so that
\begin{align}
\big|A_2\big|&\leq C\beta_k(\beta_ke^{4t_k})^{-1/3}k^{-2} \normt{\gamma u}^2
\label{eq.A2.case3}+Ck^{-2}\normt{u}^2.
\end{align}

The estimate for $A_3$ defined in \eqref{def.A.case3} is the same as that in Case 1
\begin{equation}\label{eq.A3.case3}
\big|A_3\big|\leq C\normt{D_tu}^2+Ck^2\normt{u}^2+C\normt{e^{2t}u}^2.
\end{equation}

We deduce from \eqref{def.A.case3}, \eqref{eq.A1.case3}, \eqref{eq.A2.case3} and \eqref{eq.A3.case3} that
\begin{align}
\notag A&\geq \frac{C_3}{2}(\beta_ke^{4t_k})^{2/3}\normt{\chi_0u}^2
-C\beta_k(\beta_ke^{4t_k})^{-1/3}k^{-2} \normt{\gamma u}^2-Ck^{-2}\normt{u}^2\\
\label{eq.A.case3}&\qquad-C\normt{D_tu}^2-Ck^2\normt{u}^2-C\normt{e^{2t}u}^2.
\end{align}

\medskip

\noindent{\it Estimates for $2{\rm Re}\poscal{\SL_ku}{m_{+,k}^wu}$.}
Recall $m_{+,k}^w=-i(\beta_ke^{4t_k})^{-1/3}\chi_+(t-t_k)^2$.
\begin{align}
\notag B^+&:=2{\rm Re}\poscal{\SL_ku}{m_{+,k}^wu}=2{\rm Re}\poscal{\SL_ku}{-i(\beta_ke^{4t_k})^{-1/3}\chi_+^2u}\\
\notag&=2{\rm Re}\poscal{i\beta_ke^{2t}\big(\sigma(e^{t})-\sigma(e^{t_k})\big)u}{-i(\beta_ke^{4t_k})^{-1/3}\chi_+^2u}\\
\notag&\quad-2{\rm Re}\poscal{i\beta_k\gamma\Dk\gamma u}{-i(\beta_ke^{4t_k})^{-1/3}\chi_+^2u}\\
\notag&\quad+2{\rm Re}\poscal{D_t^2u}{-i(\beta_ke^{4t_k})^{-1/3}\chi_+^2u}\\
\label{def.B+.case3}&=B^{+}_{1}+B^{+}_{2}+B^{+}_{3}.
\end{align}
Recall that the support of $\chi_+(t-t_k)$ is included in $\{t-t_k\geq c_0/2\}$ and \eqref{eq.case3sigma.2}. Thus
\begin{align}
\notag B^{+}_{1}&=2\beta_k(\beta_ke^{4t_k})^{-1/3}\poscal{e^{2t}\big(\sigma(e^{t_k})-\sigma(e^t)\big)u}{\chi_+^2u}\\
\label{eq.B+1.case3}&\geq 2c_3\beta_k(\beta_ke^{4t_k})^{-1/3}\poscal{e^{2t}\big(1-\sigma(e^{t})\big)\chi_+^2u}{u}.
\end{align}
For $B^{+}_{2}$ in \eqref{def.B+.case3} we get
\begin{align}
\notag\big|B^{+}_{2}\big|&=\big|2\beta_k(\beta_ke^{4t_k})^{-1/3}{\rm Re}\poscal{\gamma\Dk\gamma u}{\chi_+^2u}\big|\\
\label{eq.B+2.case3}&\leq 2\beta_k(\beta_ke^{4t_k})^{-1/3}k^{-2}\normt{\gamma u}^2.
\end{align}
For $B^{+}_{3}$ in \eqref{def.B+.case3} we have
\begin{align}
\notag\big|B^{+}_{3}\big|&=\big|(\beta_ke^{4t_k})^{-1/3}\poscal{\big[D_t^2,-i\chi_+^2\big]u}{u}\big|\\
\label{eq.B+3.case3}&\leq C(\beta_ke^{4t_k})^{-1/3}\big(\normt{u}^2+\normt{\p_tu}\normt{u}\big)\leq C\normt{D_tu}^2+C\normt{u}^2.
\end{align}

We get from \eqref{def.B+.case3}, \eqref{eq.B+1.case3}, \eqref{eq.B+2.case3} and \eqref{eq.B+3.case3} that
\begin{align}
\notag B^+&\geq 2c_3\beta_k(\beta_ke^{4t_k})^{-1/3}\poscal{e^{2t}\big(1-\sigma(e^{t})\big)\chi_+^2u}{u}\\
\label{eq.B+.case3}&\qquad-C\normt{u}^2-C\normt{D_tu}^2-2\beta_k(\beta_ke^{4t_k})^{-1/3}k^{-2}\normt{\gamma u}^2.
\end{align}

\medskip

\noindent{\it Estimates for $2{\rm Re}\poscal{\SL_ku}{m_{-,k}^wu}$.}
Recall that $m_{-,k}^w=i(\beta_ke^{4t_k})^{-1/3}\chi_-(t-t_k)^2$.
\begin{align}
\notag B^-&:=2{\rm Re}\poscal{\SL_ku}{m_{-,k}^wu}=2{\rm Re}\poscal{\SL_ku}{i(\beta_ke^{4t_k})^{-1/3}\chi_-^2u}\\
\notag&=2{\rm Re}\poscal{i\beta_ke^{2t}\big(\sigma(e^t)-\sigma(e^{t_k})\big)u}{i(\beta_ke^{4t_k})^{-1/3}\chi_-^2u}\\
\notag&\quad-2{\rm Re}\poscal{i\beta_k\gamma\Dk\gamma u}{i(\beta_ke^{4t_k})^{-1/3}\chi_-^2u}\\
\notag&\quad+2{\rm Re}\poscal{D_t^2u}{i(\beta_ke^{4t_k})^{-1/3}\chi_-^2u}\\
\label{def.B-.case3}&=B^{-}_{1}+B^{-}_{2}+B^{-}_{3}.
\end{align}
Recall that the support of $\chi_-(t-t_k)$ is included in $\{t-t_k\leq -c_0/2\}$ and \eqref{eq.case3sigma.2}. Thus
\begin{align}
\notag B^{-}_{1}&=2\beta_k(\beta_ke^{4t_k})^{-1/3}\poscal{e^{2t}\big(\sigma(e^{t})-\sigma(e^{t_k})\big)u}{\chi_-^2u}\\
\label{eq.B-1.case3}&\geq 2c_3\beta_k(\beta_ke^{4t_k})^{-1/3}\poscal{e^{2t}\big(1-\sigma(e^{t_k})\big)\chi_-^2u}{u}.
\end{align}
For $B^{-}_{2}$ in \eqref{def.B-.case3} we have
\begin{align}
\notag\big|B^{-}_{2}\big|&=\big|2\beta_k(\beta_ke^{4t_k})^{-1/3}{\rm Re}\poscal{\gamma\Dk\gamma u}{\chi_-^2u}\big|\\
\label{eq.B-2.case3}&\leq 2\beta_k(\beta_ke^{4t_k})^{-1/3}k^{-2}\normt{\gamma u}^2.
\end{align}
For $B^{-}_{3}$ in \eqref{def.B-.case3}, we have
\begin{align}
\notag\big|B^{-}_{3}\big|&=\big|(\beta_ke^{4t_k})^{-1/3}\poscal{\big[D_t^2,i\chi_-^2\big]u}{u}\big|\\
\label{eq.B-3.case3}&\leq C(\beta_ke^{4t_k})^{-1/3}\big(\normt{u}^2+\normt{\p_tu}\normt{u}\big)\leq C\normt{u}^2+C\normt{D_tu}^2.
\end{align}

We get from \eqref{def.B-.case3}, \eqref{eq.B-1.case3}, \eqref{eq.B-2.case3} and \eqref{eq.B-3.case3} that
\begin{align}
\notag B^-&\geq 2c_3\beta_k(\beta_ke^{4t_k})^{-1/3}\poscal{e^{2t}\big(1-\sigma(e^{t_k})\big)\chi_-^2u}{u}\\
\label{eq.B-.case3}&\qquad-C\normt{u}^2-C\normt{D_tu}^2-2\beta_k(\beta_ke^{4t_k})^{-1/3}k^{-2}\normt{\gamma u}^2.
\end{align}


\begin{proof}[End of the proof of Proposition \ref{prop.case3}]
By \eqref{eq.A.case3}, \eqref{eq.B+.case3}, \eqref{eq.B-.case3} and the definition \ref{def.case3} of $M_k$, we get
\begin{align}
\notag2{\rm Re}\poscal{\SL_ku}{M_ku}&=A+B^++B^-\\
\notag&\geq \frac{C_3}{2}(\beta_ke^{4t_k})^{2/3}\normt{\chi_0u}^2\\
\notag&\qquad+2c_3\beta_k(\beta_ke^{4t_k})^{-1/3}\poscal{e^{2t}\big(1-\sigma(e^{t})\big)\chi_+^2u}{u}\\
\notag&\qquad+2c_3\beta_k(\beta_ke^{4t_k})^{-1/3}\poscal{e^{2t}\big(1-\sigma(e^{t_k})\big)\chi_-^2u}{u}\\
\notag&\qquad-C\beta_k(\beta_ke^{4t_k})^{-1/3}k^{-2} \normt{\gamma u}^2\\
\notag&\qquad-C\normt{D_tu}^2-Ck^2\normt{u}^2-C\normt{e^{2t}u}^2\\
\notag&\geq \beta_k(\beta_ke^{4t_k})^{-1/3}\poscal{\Big(c\widetilde\rho(t,t_k)-Ck^{-2}e^{4t}g(e^t)^2\Big)u}{u}\\
\notag&\qquad-C\normt{D_tu}^2-Ck^2\normt{u}^2-C\normt{e^{2t}u}^2,
\end{align}
where
$\widetilde\rho(t,t_k)$ is given in \eqref{eq.rho3}. 
This completes the proof of \eqref{eq.prop.case3} in Proposition \ref{prop.case3}.
\end{proof}

We have the following estimates for $\widetilde\rho(t,t_k)$.
\begin{lem}\label{lem.rho3}
There exist $C_{10},C_{11}>0$ such that for all $e^{t_k}<\epsilon_1$, $t\in\R$, $\alpha\geq\alpha_0$, $k\geq1$,
\begin{equation}\label{eq.rho3.1}
\widetilde\rho(t,t_k)\geq C_{10}e^{4t}g(e^{t})^2,
\end{equation}
\begin{equation}\label{eq.rho3.2}
\beta_k(\beta_ke^{4t_k})^{-1/3}\widetilde\rho(t,t_k)+k^2\geq C_{11}\beta_k^{1/2}e^{2t},
\end{equation}
where $\widetilde\rho$ is given in \eqref{eq.rho3}, $g$ is given in \eqref{eq.sigma} and $\beta_k$ is given in \eqref{eq.beta}.
\end{lem}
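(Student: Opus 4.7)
The plan is to prove each inequality by exploiting the partition of unity $\chi_0(t-t_k)^2+\chi_+(t-t_k)^2+\chi_-(t-t_k)^2=1$, so that at every $t$ at least one of the three cutoffs squared is $\geq 1/3$; in the corresponding zone, a single term of $\widetilde\rho$ provides a usable lower bound, and the analysis reduces to three case-by-case computations analogous to those in Lemmas \ref{lem.rho1} and \ref{lem.rho2}.

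For \eqref{eq.rho3.1}, the plan is to show $e^{4t}g(e^t)^2\leq C\widetilde\rho$ in each zone. In the central zone $|t-t_k|\leq c_0$, since $g\leq 1$ we have $e^{4t}g(e^t)^2\leq e^{4c_0}e^{4t_k}$, which is dominated by the contribution $e^{4t_k}\chi_0^2$. In the right zone $t\geq t_k+c_0/2$, I invoke the pointwise inequality $r^4 g(r)^2\leq C\, r^2(1-\sigma(r))$ valid for all $r>0$: near $r=0$ both sides are of order $r^4$ (from the Taylor expansion $1-\sigma(r)=r^2/8+O(r^4)$), while at infinity the left-hand side decays exponentially and $r^2(1-\sigma(r))\sim r^2$, so the ratio is globally bounded. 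In the left zone $t\leq t_k-c_0/2$, write $e^{4t}g(e^t)^2\leq e^{4t}\leq e^{-c_0}e^{2t_k}e^{2t}$ and use $e^{2t_k}\leq C(1-\sigma(e^{t_k}))$, valid for $e^{t_k}\leq\epsilon_1<1$ by the same Taylor expansion.

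For \eqref{eq.rho3.2}, set $L=\beta_k(\beta_ke^{4t_k})^{-1/3}=\beta_k^{2/3}e^{-4t_k/3}$. The decisive observation is that the hypothesis $e^{t_k}>\beta_k^{-1/4}$ is precisely what forces $\beta_k^{1/6}e^{2t_k/3}\geq 1$. In the central zone, $L\widetilde\rho\geq \tfrac{1}{3}\beta_k^{2/3}e^{8t_k/3}$ and $e^{2t}\leq e^{2c_0}e^{2t_k}$, reducing the claim to $\beta_k^{1/6}e^{2t_k/3}\geq C$. In the left zone, $1-\sigma(e^{t_k})\geq c\, e^{2t_k}$ gives $L\widetilde\rho\geq c\,\beta_k^{2/3}e^{2t_k/3}e^{2t}=c\,\beta_k^{1/2}(\beta_k^{1/6}e^{2t_k/3})e^{2t}$. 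In the right zone I will split two subcases: if $e^t\leq 1$ then $1-\sigma(e^t)\geq c\,e^{2t}\geq c\,e^{c_0}e^{2t_k}$ and the argument proceeds as in the left zone; if $e^t\geq 1$ then $1-\sigma(e^t)\geq 1-\sigma(1)>0$, and combined with $e^{-4t_k/3}\geq\epsilon_1^{-4/3}$ the inequality reduces to $\beta_k^{1/6}\geq C$, which holds once $\alpha\geq\alpha_0$ is large enough.

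The hard work, such as it is, reduces to assembling a few elementary inequalities for $\sigma$ — the pointwise bound $r^4 g(r)^2\leq C\,r^2(1-\sigma(r))$ on $(0,\infty)$ and the lower bound $(1-\sigma(r))/r^2\geq c$ on $(0,\epsilon_1]$ — both of which should follow from the estimates already gathered in Appendix \ref{sec.ineq}. I note that the term $k^2$ on the left of \eqref{eq.rho3.2} is not actually needed for the pointwise bound itself; it is retained because the inequality will later be combined with the real-part identity \eqref{eq.re}, in which this term naturally appears.
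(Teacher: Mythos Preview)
Your treatment of \eqref{eq.rho3.1} is essentially the paper's proof: same three-zone split, same pointwise inequalities (the paper cites \eqref{sigma.3} for $r^2g(r)^2\leq 8(1-\sigma(r))$ in the $\chi_+$ zone and uses $1-\sigma(e^{t_k})\geq \delta e^{2t_k}$ in the $\chi_-$ zone).

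For \eqref{eq.rho3.2} there is a genuine gap. You invoke the hypothesis $e^{t_k}>\beta_k^{-1/4}$ to force $\beta_k^{1/6}e^{2t_k/3}\geq 1$, but that hypothesis is \emph{not} part of the lemma: the statement asks for the bound for all $e^{t_k}<\epsilon_1$, with no lower restriction. Without that extra hypothesis your argument breaks down in the central and left zones (and in the right zone when $e^t\leq 1$), since $\beta_k^{1/6}e^{2t_k/3}$ can be arbitrarily small as $e^{t_k}\to 0$ with $\beta_k$ fixed. Correspondingly, your closing remark that ``the term $k^2$ \dots is not actually needed for the pointwise bound'' is false for the lemma as stated: at $t=t_k$ one has $L\,\widetilde\rho(t_k,t_k)=\beta_k^{2/3}e^{8t_k/3}$, which does \emph{not} dominate $\beta_k^{1/2}e^{2t_k}$ uniformly over $e^{t_k}\in(0,\epsilon_1)$.

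The paper's proof differs precisely here: it \emph{uses} the $k^2$ term, via a weighted AM--GM of the form $a+b\geq C\,a^{3/4}b^{1/4}$. For instance, in the central zone it writes
\[
(\beta_k e^{4t_k})^{2/3}+k^2\geq \big((\beta_k e^{4t_k})^{2/3}\big)^{3/4}(k^2)^{1/4}=\beta_k^{1/2}k^{1/2}e^{2t_k}\geq e^{-2c_0}\beta_k^{1/2}e^{2t},
\]
and similarly in the $\chi_-$ zone and in the $\chi_+$ zone when $e^t\leq 2$ (after bounding $1-\sigma(e^t)\geq e^{2t}/16$). This eliminates any need for a lower bound on $e^{t_k}$ and proves the lemma in the generality claimed. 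Your argument, by contrast, is correct only on the Case~3 range $e^{t_k}\in(\beta_k^{-1/4},\epsilon_1)$; that suffices for the application in Theorem~\ref{thm.case3}, but it does not establish Lemma~\ref{lem.rho3} as written.
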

\begin{proof}[Proof of Lemma \ref{lem.rho3}]
Suppose $e^{t_k}<\epsilon_1$, then $1-\sigma(e^{t_k})\geq \delta e^{2t_k}$. If $t$ is in the support of $\chi_0(\cdot-t_k)$, i.e. $|t-t_k|\leq c_0$, we have
$$e^{4t}g(e^t)^2\leq e^{4t}\leq e^{4c_0}e^{4t_k},$$
$$\beta_k(\beta_ke^{4t_k})^{-1/3}e^{4t_k}+k^2
\geq\big((\beta_ke^{4t_k})^{2/3}\big)^{3/4}(k^2)^{1/4}=\beta_k^{1/2}e^{2t_k}k^{1/2}\geq e^{-2c_0}\beta_k^{1/2}e^{2t}.$$
 If $t$ is in the support of $\chi_-(\cdot-t_k)$, i.e. $t\leq t_k-c_0/2$, we have
$$e^{4t}g(e^t)^2\leq e^{2t}e^{2t_k}\leq \delta^{-1}e^{2t}\big(1-\sigma(e^{t_k})\big),$$
\begin{align*}
&\beta_k(\beta_ke^{4t_k})^{-1/3}e^{2t}\big(1-\sigma(e^{t_k})\big)+k^2\\
&\geq  \big(\delta\beta_k(\beta_ke^{4t_k})^{-1/3}e^{2t_k}+k^2e^{-2t_k}\big)e^{2t}\\
&\geq C\big(\beta_k^{2/3}e^{2t_k/3}\big)^{3/4}\big(k^{2}e^{-2t_k}\big)^{1/4}e^{2t}=C\beta_k^{1/2}k^{1/2}e^{2t}.
\end{align*}
 If $t$ is in the support of $\chi_+(\cdot-t_k)$, i.e. $t\geq t_k+c_0/2$, we have, by \eqref{sigma.3}
 $$e^{4t}g(e^t)^2\leq 8e^{2t}\big(1-\sigma(e^{t})\big).$$
Suppose $t\geq t_k+c_0/2$, if $e^t\leq2$, we have $1-\sigma(e^t)\geq e^{2t}/16$ and then
\begin{align*}
\beta_k(\beta_ke^{4t_k})^{-1/3}e^{2t}&\big(1-\sigma(e^t)\big)+k^2\geq \big(\beta_k^{2/3}e^{-4t_k/3}\frac{e^{2t}}{16}+k^2e^{-2t}\big)e^{2t}\\
&\geq \big(\frac{1}{16}\beta_k^{2/3}e^{2t/3}\big)^{3/4}\big(k^{2}e^{-2t}\big)^{1/4}e^{2t}=\frac18\beta_k^{1/2}k^{1/2}e^{2t};
\end{align*}
if $e^t\geq2$, we have $1-\sigma(e^t)\geq1-\sigma(2)=e^{-1}$ and then
$$\beta_k(\beta_ke^{4t_k})^{-1/3}e^{2t}\big(1-\sigma(e^t)\big)\geq e^{-1}\beta_k^{2/3}e^{-4t_k/3}e^{2t}\geq e^{-1}\epsilon_1^{-4/3}\beta_k^{2/3}e^{2t}\geq e^{-1}\epsilon_1^{-4/3}\beta_k^{1/2}e^{2t}.$$
This completes the proof of \eqref{eq.rho3.1}, \eqref{eq.rho3.2}.
\end{proof}

\begin{proof}[Proof of Theorem \ref{thm.case3}]
The estimates \eqref{eq.prop.case3} and \eqref{eq.rho3.1} imply that there exists $k_0\geq3$, for all $k\geq k_0$,
\begin{align*}
\notag2{\rm Re}\poscal{\SL_ku}{M_ku}&\geq \frac{c}{2}\beta_k(\beta_ke^{4t_k})^{-1/3}\poscal{\widetilde\rho(t,t_k)u}{u}\\
&\quad-C\normt{D_tu}^2-Ck^2\normt{u}^2-C\normt{e^{2t}u}^2.
\end{align*}
Choosing $C_{12}>0$ large enough, we have, using \eqref{eq.re}
\begin{align}
\label{eq.case3.1}
2{\rm Re}\poscal{\SL_ku}{(C_{12}+M_k)u}&\geq \frac{c}{2}\poscal{\Big(\beta_k(\beta_ke^{4t_k})^{-1/3}\widetilde\rho(t,t_k)+D_t^2+k^2+e^{4t}\Big)u}{u}.
\end{align}
We deduce from \eqref{eq.rho3.2} and \eqref{eq.case3.1} that for $k\geq k_0$,
\begin{align*}
\notag2{\rm Re}\poscal{\SL_ku}{(C_{12}+M_k)u}&\geq C\poscal{\beta_k^{1/2}e^{2t}u}{u}.
\end{align*}
Using that $e^{t}(C_{12}+M_k)e^{-t}$ is bounded on $L^2(\R;dt)$ and Cauchy-Schwarz inequality, we get
$$\normt{e^{-t}\SL_ku}\geq C\beta_k^{1/2}\normt{e^tu},\quad\forall k\geq k_0,$$
completing the proof of Theorem \ref{thm.case3}.
\end{proof}

\subsubsection{Case 4: $e^{t_k}\leq \beta_k^{-1/4}$}\label{sec.case4}
If $e^{t_k}\leq\beta_k^{-1/4}$, we can get estimate by using the multipliers ${\rm Id}$ and $i{\rm Id}$. 
\begin{lem}\label{lem.case4}
Suppose $e^{t_k}\leq\beta_k^{-1/4}$. There exists $C>0$ such that for all $k\geq1$, $\alpha\geq8\pi$, $u\in C_0^\infty(\R)$,
\begin{equation}\label{est.case4}
\normt{e^{-t}\SL_ku}\geq C\beta_k^{1/2}\normt{e^tu},
\end{equation}
where $\SL_k$ is given in \eqref{eq.lk2} and $\beta_k$ is given in \eqref{eq.beta}.
\end{lem}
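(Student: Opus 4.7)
The plan is to mimic Lemma \ref{lem.easy1}, using only the multipliers $\operatorname{Id}$ and $-i\operatorname{Id}$. The hypothesis $e^{t_k}\le\beta_k^{-1/4}$ forces $\sigma(e^{t_k})$ to be within $O(\beta_k^{-1/2})$ of the critical value $1$ that was handled in Lemma \ref{lem.easy1}, so the present case is an $O(\beta_k^{1/2})$ perturbation of that one, which fortunately is of exactly the size we need to absorb.

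Concretely I would first record the two basic computations
$$\text{Re}\poscal{\SL_ku}{u}=\|D_tu\|^2+k^2\|u\|^2+\tfrac1{16}\|e^{2t}u\|^2,$$
$$\text{Re}\poscal{\SL_ku}{-iu}=\beta_k\poscal{e^{2t}\bigl(\sigma(e^{t_k})-\sigma(e^t)\bigr)u}{u}+\beta_k\poscal{\gamma\Dk\gamma u}{u},$$
discard the non-negative nonlocal contribution using \eqref{eq.gkg}, and invoke the elementary bound $1-\sigma(r)\le r^2/8$ (from Appendix \ref{sec.ineq}) at $r=e^{t_k}$, which combined with $e^{2t_k}\le\beta_k^{-1/2}$ gives
$$0\le 1-\sigma(e^{t_k})\le\tfrac18\beta_k^{-1/2},\qquad \sigma(e^{t_k})-\sigma(e^t)\ge\bigl(1-\sigma(e^t)\bigr)-\tfrac18\beta_k^{-1/2}\quad\forall t\in\R.$$

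Adding the two identities, i.e. using the multiplier $(1-i)\operatorname{Id}$, yields
$$\text{Re}\poscal{\SL_ku}{(1-i)u}\ge k^2\|u\|^2+\tfrac{1}{16}\|e^{2t}u\|^2+\beta_k\poscal{e^{2t}(1-\sigma(e^t))u}{u}-\tfrac{\beta_k^{1/2}}{8}\|e^tu\|^2.$$
Now I would finish via the second inequality in \eqref{sigma.2}, which is the very same ingredient used in Lemma \ref{lem.easy1}: there is $C_0>0$ with $k^2e^{-2t}+\beta_k(1-\sigma(e^t))\ge C_0\beta_k^{1/2}$ for all $t\in\R$, $k\ge1$, $\beta_k\ge1$. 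Multiplying by $e^{2t}|u|^2$ and integrating gives
$$k^2\|u\|^2+\beta_k\poscal{e^{2t}(1-\sigma(e^t))u}{u}\ge C_0\beta_k^{1/2}\|e^tu\|^2,$$
so provided $C_0>1/8$ the wrong-sign perturbation is strictly dominated and one concludes $\text{Re}\poscal{\SL_ku}{(1-i)u}\ge c\beta_k^{1/2}\|e^tu\|^2$. If the explicit constant from the appendix happens to be $C_0\le1/8$, I would instead use the multiplier $N\operatorname{Id}-i\operatorname{Id}$ with $N\ge 1$ large; since $N\ge1$ one has $Nk^2\|u\|^2+\beta_k\poscal{e^{2t}(1-\sigma(e^t))u}{u}\ge C_0\beta_k^{1/2}\|e^tu\|^2$, while the $Nk^2\|u\|^2$ term improves the constant (via the same pointwise inequality now read with $N k^2 e^{-2t}$) and the error term remains $\beta_k^{1/2}/8$. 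Finally the weight identity $\poscal{\SL_ku}{v}=\poscal{e^{-t}\SL_ku}{e^tv}$ with $v=(N-i)u$, together with Cauchy–Schwarz, gives \eqref{est.case4}.

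The only substantive point is the calibration: the threshold $e^{t_k}\le\beta_k^{-1/4}$ is chosen precisely so that the "wrong-sign" correction $\beta_k\bigl(1-\sigma(e^{t_k})\bigr)\|e^tu\|^2$ is of size $\beta_k^{1/2}\|e^tu\|^2$ — the same scale as the target lower bound — which is what allows the argument to be closed with the trivial multipliers without any pseudodifferential machinery. Beyond this there is no real obstacle: the analysis of the nonlocal term is free (it has the favorable sign), so the proof is strictly easier than any of the non-trivial cases already treated.
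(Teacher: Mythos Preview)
Your argument is essentially identical to the paper's: same multiplier $(1-i)\operatorname{Id}$, same elementary bound $1-\sigma(r)\le r^2/8$, same appeal to the second inequality in \eqref{sigma.2}. Your hedge about $C_0$ is unnecessary since the appendix gives $C_0=e^{-1}>1/8$ explicitly (and your fallback with $N\operatorname{Id}-i\operatorname{Id}$ would not actually improve $C_0$, as the bottleneck in \eqref{sigma.2} occurs for large $e^t$ where the $k^2e^{-2t}$ term is negligible).
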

\begin{proof}
At first note that 
$$\forall r>0,\quad 1-\sigma(r)\leq \frac18r^{2}.$$
We have for $e^{t_k}\leq\beta_k^{-1/4}$,
\begin{align}
\notag{\rm Re}\poscal{\SL_ku}{-iu}&=\beta_k\poscal{e^{2t}\big(\sigma(e^{t_k})-\sigma(e^{t})\big)u}{u}+\beta_k\poscal{\gamma\Dk\gamma u}{u}\\
\notag&\geq \beta_k\poscal{e^{2t}\Big(\big(1-\sigma(e^{t})\big)-\big(1-\sigma(e^{t_k})\big)\Big)u}{u}\\
\notag&\geq \beta_k\poscal{e^{2t}\Big(\big(1-\sigma(e^{t})\big)-\frac18\beta_k^{-1/2}\Big)u}{u},
\end{align}
so that with \eqref{eq.re} we get,
\begin{align*}
{\rm Re}\poscal{\SL_ku}{(1-i)u}\geq \poscal{\Big(\underbrace{k^2e^{-2t}+\beta_k\big(1-\sigma(e^t)\big)}_{\geq e^{-1}\beta_k^{1/2},\ \text{by \eqref{sigma.2}}}-\frac18\beta_k^{1/2}\Big)e^{2t}u}{u},
\end{align*}
thus
$${\rm Re}\poscal{\SL_ku}{(1-i)u}\geq \big(\frac{1}{e}-\frac{1}{8}\big)\beta_k^{1/2}\poscal{e^{2t}u}{u}.$$
By Cauchy-Schwarz inequality, we complete the proof of \eqref{est.case4}.
\end{proof}


\subsection{End of the proof of Theorem \ref{thm.result}}
Summarizing the estimates in Lemma \ref{lem.easy1}, \ref{lem.easy2}, Theorem \ref{thm.case1}, \ref{thm.case2}, \ref{thm.case3} and Lemma \ref{lem.case4}, we have proved the estimate for the operator $\SL_k$ given in \eqref{eq.lk}:
 There exist $C>0$, $k_0\geq3$, $\alpha_0\geq1$ such that for all $|k|\geq k_0$, $\alpha\geq\alpha_0$, $u\in C_0^\infty(\R)$,
\begin{equation}
\|e^{-t}\SL_ku\|_{L^2(\R;dt)}\geq C|\beta_k|^{1/3}\|e^tu\|_{L^2(\R;dt)},
\end{equation} 
and an estimate of the same type for $\widetilde\SL_k$ given in \eqref{eq.tilde.lk} (with different constants $C,\alpha_0$). 
This corresponds to the following estimate for the operator $\h_{\alpha,k,\lambda}=\h_k$ given in \eqref{eq.hk1}, \eqref{eq.hk} for $v\in C_0^\infty((0,+\infty))$, by the equivalence of \eqref{eq.u} and \eqref{eq.v}
\begin{equation}
\|\h_{k,\alpha,\lambda}v\|_{L^2(\R_+;rdr)}\geq C|\beta_k|^{1/3}\|v\|_{L^2(\R_+;rdr)}.
\end{equation}
Then noticing \eqref{eq.h.alpha}, we get for $\omega=\sum_{|k|\geq k_0}\omega_k(r)e^{ik\theta}\in C_0^\infty(\R^2)\cap X_{k_0}$,
\begin{align*}
\normd{(\h_\alpha-i\lambda)\omega}^2&=\sum_{|k|\geq k_0}\|\h_{k,\alpha,\lambda}\omega_k\|_{L^2(\R_+;rdr)}^2\\
&\geq \sum_{|k|\geq k_0}C^2|\beta_k|^{2/3}\|\omega_k\|_{L^2(\R_+;rdr)}^2\\
&=C^2\alpha^{2/3}\normd{|D_\theta|^{1/3}\omega}^2,
\end{align*}
$$\text{since}\quad\normd{|D_\theta|^{1/3}\omega}^2=\sum_{|k|\geq k_0}|k|^{2/3}\|\omega_k\|_{L^2(\R_+;rdr)}^2.$$
Thus \eqref{eq.result1} is proved. 
Since $k_0\geq3$, we know that the imaginary axis does not intersect with the spectrum of $\h_\alpha$ viewed as an operator acting on $X_{k_0}$, which gives \eqref{eq.result2}.
The proof of Theorem \ref{thm.result} is complete.


\section{Appendix}

\subsection{Weyl calculus}\label{sec.app.weyl}
We present some facts about the Weyl calculus, which can be found in \cite[Chapter 18]{Hor3} as well as in \cite[Chapter 2]{Lerner}.
The Weyl quantization associates to a symbol $a$ the operator $a^w$ defined by
\begin{equation}\label{def.weyl}
(a^wu)(x)=\frac{1}{(2\pi)^n}\iint_{\R^n\times\R^n} e^{i(x-y)\cdot\xi}a\bigl(\frac{x+y}{2},\xi\bigr)u(y)dyd\xi.
\end{equation}
Consider the symplectic space $\R^{2n}$ equipped with the symplectic form $\sigma=\sum_{i=1}^nd\xi^i\wedge dx^i$. Given a positive definite quadratic form $\Gamma$ on $\R^{2n}$, we define
$$\Gamma^\sigma(T)=\sup_{\Gamma(Y)=1}\sigma(T,Y)^2,$$
which is also a positive quadratic form. We say that $\Gamma$ is an admissible metric if there exist $C_0$, $\tilde C_0$, $\tilde N_0>0$ such that for all $X,Y\in\R^{2n}$,
\begin{equation}\label{def.adm.metric}
\begin{cases}
\text{uncertainty principle:} & \Gamma_X\leq \Gamma_X^\sigma, \\
\text{slowness: }  &\Gamma_X(X-Y)\leq C_0^{-1} \Longrightarrow(\Gamma_Y/\Gamma_X)^{\pm1}\leq C_0,\\
\text{temperance: } & \Gamma_X\leq \tilde C_0\Gamma_Y\big(1+\Gamma_X^\sigma(X-Y)\big)^{\tilde N_0}.  
\end{cases}
\end{equation}
$C_0,\tilde C_0,\tilde N_0$ in \eqref{def.adm.metric} are called {\it structure constants} of the metric $\Gamma$. 
An admissible weight is a positive function $m$ on the phase space $\R^{2n}$, such that there exist $C_0'$, $\tilde C_0'$, $\tilde N_0'>0$ so that for all $X,Y\in\R^{2n}$,
\begin{equation}\label{def.adm.weight}
\begin{cases}
\text{slowness: }  &\Gamma_X(X-Y)\leq C_0'^{-1} \Longrightarrow(m(Y)/m(X))^{\pm1}\leq C_0',\\
\text{temperance: } &m(X)\leq \tilde C_0m(Y)\big(1+\Gamma_X^\sigma(X-Y)\big)^{\tilde N_0}.  
\end{cases}
\end{equation}
$C_0',\tilde C_0',\tilde N_0'$ in \eqref{def.adm.weight} are called {\it structure constants} of the weight $m$. 
In particular, the function defined by
\begin{equation}\label{def.lambda}
\lambda_\Gamma(X)=\inf_{T\in\R^{2n},T\neq0}\bigl(\Gamma_X^\sigma(T)/\Gamma_X(T)\bigr)^{1/2}
\end{equation}
is an admissible weight for $\Gamma$ and its structure constants depend only on the structure constants of $\Gamma$ (see \cite{D2}). The uncertainty principle is equivalent to $\lambda_\Gamma\geq1$.

We prove the uniform admissibility of a special type of metrics, including those we have used in the proof, given in Definition \ref{def.metric}, \ref{def.case3}.
\begin{lem}\label{lem.metric}
For $\gamma\geq1$,
the metric on the phase space $\R_t\times\R_\tau$ given by
\begin{align*}
\Gamma&=|dt|^2+\frac{|d\tau|^2}{\tau^2+\gamma^2},
\end{align*}
is admissible. Moreover, the structure constants of $\Gamma$ defined in \eqref{def.adm.metric} are bounded above independently of $\gamma$.
\end{lem}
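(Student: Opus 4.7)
The plan is to verify the three conditions in \eqref{def.adm.metric} directly and quantitatively, exploiting that $\Gamma$ is diagonal in the canonical coordinates $(t,\tau)$. First I would compute the dual metric: for $X=(t,\tau)$ and $T=(s,\eta)\in\R^2$, the quadratic form $\Gamma_X(T)=s^2+\frac{\eta^2}{\tau^2+\gamma^2}$ is diagonal, and writing $\sigma(T,Y)=s\eta'-s'\eta$ with the Cauchy--Schwarz-type maximization under $\Gamma_X(Y)=1$ immediately yields
\begin{equation*}
\Gamma_X^\sigma(T)=(\tau^2+\gamma^2)s^2+\eta^2.
\end{equation*}
Reading off \eqref{def.lambda}, I obtain $\lambda_\Gamma(X)=(\tau^2+\gamma^2)^{1/2}\geq\gamma\geq 1$, which gives the uncertainty principle $\Gamma_X\leq\Gamma_X^\sigma$ at once.

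Next, for slowness, I would write $Y=(t',\tau')$ and assume $\Gamma_X(X-Y)\leq C_0^{-1}$. The hypothesis gives $(\tau-\tau')^2\leq C_0^{-1}(\tau^2+\gamma^2)$, so a simple triangle-inequality estimate
\begin{equation*}
\tau'^2+\gamma^2\leq 2\tau^2+2(\tau-\tau')^2+\gamma^2\leq (2+2C_0^{-1})(\tau^2+\gamma^2),
\end{equation*}
together with the symmetric bound (obtained by swapping the roles of $X$ and $Y$, after noting that the slowness hypothesis is itself symmetric up to a constant), controls the ratio $(\tau'^2+\gamma^2)/(\tau^2+\gamma^2)$ by a $\gamma$-independent constant for $C_0$ large enough. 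Since the $|dt|^2$-coefficient of $\Gamma$ is identically $1$, slowness follows.

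For temperance, I would estimate more generously: from $\tau'^2\leq 2\tau^2+2(\tau-\tau')^2\leq 2\tau^2+2\Gamma_X^\sigma(X-Y)$ and $\gamma^2\leq \tau^2+\gamma^2$, I obtain
\begin{equation*}
\tau'^2+\gamma^2\leq 2(\tau^2+\gamma^2)\bigl(1+\Gamma_X^\sigma(X-Y)\bigr).
\end{equation*}
Inverting this for the coefficient of $|d\tau|^2$ in $\Gamma_Y$ compared with $\Gamma_X$ (and using that the $|dt|^2$-coefficient is constant) yields \eqref{def.adm.metric} with $\tilde C_0=2$ and $\tilde N_0=1$.

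There is no genuine obstacle here; the only point that matters is to track where the hypothesis $\gamma\geq 1$ is used. It enters twice: once to guarantee $\lambda_\Gamma\geq 1$ (uncertainty principle), and once to absorb the additive term $\gamma^2$ inside $\tau^2+\gamma^2$ in the temperance step. All resulting structure constants depend only on the choice of $C_0$ in the slowness proof, not on $\gamma$, which is exactly the uniform admissibility claimed.
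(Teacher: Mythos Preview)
Your approach is essentially the same as the paper's: compute $\Gamma_X^\sigma$ explicitly, read off $\lambda_\Gamma=(\tau^2+\gamma^2)^{1/2}\geq1$, and control the single nontrivial coefficient $\tau^2+\gamma^2$ via the triangle inequality. One wording issue: in the slowness step, your justification of the ``symmetric bound'' is circular as written, since saying ``the slowness hypothesis is symmetric up to a constant'' (i.e., $\Gamma_Y(X-Y)$ is also small) already uses $\Gamma_Y\leq C\Gamma_X$, which is what you are proving. The fix is immediate and is exactly what the paper does: from $(\tau-\tau')^2\leq C_0^{-1}(\tau^2+\gamma^2)$ write $\tau^2\leq 2\tau'^2+2(\tau-\tau')^2$ to get $(1-2C_0^{-1})(\tau^2+\gamma^2)\leq 2(\tau'^2+\gamma^2)$ directly, requiring only $C_0>2$. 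Your temperance argument is in fact a bit cleaner than the paper's case analysis: the paper splits into $|\tau'|\leq 2|\tau|$, $|\tau'|\leq\gamma$, and the complementary case, whereas your single inequality $\tau'^2+\gamma^2\leq 2(\tau^2+\gamma^2)(1+\Gamma_X^\sigma(X-Y))$ (valid because $\tau^2+\gamma^2\geq1$) handles everything at once with $\tilde C_0=2$, $\tilde N_0=1$.
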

\begin{proof}
First we notice that
$$\lambda_\Gamma=(\tau^2+\gamma^2)^{1/2}\geq \gamma\geq1,$$ so that $\Gamma$ satisfies the uncertainty principle. 

\noindent{\it Slowness.}
 It suffices to prove for $X=(x,\xi)$, $Y=(y,\eta)$, $T=(t,\tau)$, $\Gamma_X(X-Y)\leq s^2$ implies $\Gamma_Y\leq C_0\Gamma_X$. 
Indeed, if $\Gamma_X(X-Y)\leq s^2$ then $|\xi-\eta|^2\leq s^2(\xi^2+\gamma^2)$, and we obtain 
$$\xi^2\leq2(\xi-\eta)^2+2\eta^2\leq2s^2(\xi^2+\gamma^2)+2\eta^2,$$ 
$$\text{thus}\quad (1-2s^2)(\xi^2+\gamma^2)\leq2(\eta^2+\gamma^2).$$ By choosing $0<s<1/\sqrt{2}$ and $C_0=2(1-2s^2)^{-1}>1$, we get 
$$\xi^2+\gamma^2\leq C_0(\eta^2+\gamma^2).$$ 
$$\text{Then}\quad \Gamma_Y(T)=t^2+\frac{\tau^2}{\eta^2+\gamma^2}\leq t^2+\frac{C_0\tau^2}{\xi^2+\gamma^2}\leq C_0\Gamma_X(T).$$

\noindent{\it Temperance.} We have
$$\Gamma_X^\sigma=(\xi^2+\gamma^2)|dt|^2+|d\tau|^2,$$
$$\frac{\Gamma_X(T)}{\Gamma_Y(T)}\leq\max\big(1,\frac{\eta^2+\gamma^2}{\xi^2+\gamma^2}\big).$$
If $|\eta|\leq2|\xi|$ or $|\eta|\leq \gamma$, the right-hand side of the last inequality is bounded from above by 4. If $|\eta|>2|\xi|$ and $|\eta|\geq \gamma$, then $|\xi-\eta|\geq\frac{1}{2}|\eta|$, which implies that $\Gamma_X^\sigma(X-Y)\geq (\xi-\eta)^2\geq \frac{1}{4}\eta^2$; on the other hand, we have
$$\frac{\eta^2+\gamma^2}{\xi^2+\gamma^2}\leq\frac{\eta^2+\gamma^2}{\gamma^{2}}=1+\gamma^{-2}\eta^2,$$
since $\gamma\geq1$, we have 
$$\frac{\Gamma_X(T)}{\Gamma_Y(T)}\leq1+4\Gamma_X^\sigma(X-Y).$$
So the inequality $\Gamma_X(T)/\Gamma_Y(T)\leq4(1+\Gamma_X^\sigma(X-Y))$ holds for any $X,Y,T$.
As a result, we have proved that $\Gamma$ is admissible. 
From the proof above, we see that the structure constants are independent of $\gamma$, and this ends the proof of lemma. 
\end{proof}

The space of symbols $S(m,\Gamma)$ is defined as the set of functions $a\in C^\infty(\R^{2n})$ such that the following semi-norms for all $k\in\N$ 
\begin{equation}\label{def.seminorm}
\sup_{\Gamma_X(T_j)\leq1}\big|a^{(k)}(X)(T_1,\cdots,T_k)\big|m(X)^{-1}<+\infty.
\end{equation}

The composition law $\sharp$ is defined by $a^wb^w=(a\sharp b)^w$ and we have
\begin{equation}
(a\sharp b)(X)=\exp\big(\frac{i}{2}\sigma(D_X,D_Y)\big)a(X)b(Y)_{|Y=X}.
\end{equation}
For $a\in S(m_1,\Gamma)$, $b\in S(m_2,\Gamma)$, we have the asymptotic expansion
\begin{equation}
(a\sharp b)(x,\xi)=\sum_{0\leq k<N}w_k(a,b)+r_N(a,b),
\end{equation}
\begin{align}
\text{with}\quad w_k(a,b)=2^{-k}\sum_{|\alpha|+|\beta|=k}\frac{(-1)^{|\beta|}}{\alpha!\beta!}D_\xi^\alpha\partial_x^\beta a\ D_\xi^\beta\partial_x^\alpha b&\quad\in S(m_1m_2\lambda_\Gamma^{-k},\Gamma),\\
r_N(a,b)(X)=R_N\big(a(X)\otimes b(Y)\big)_{|X=Y}&\quad\in S(m_1m_2\lambda_\Gamma^{-N},\Gamma),
\end{align}
\begin{equation}
R_N=\int_0^1\frac{(1-\theta)^{N-1}}{(N-1)!}\exp\frac{\theta}{2i}[\partial_X,\partial_Y]d\theta\Big(\frac{1}{2i}[\partial_X,\partial_Y]\Big)^N.
\end{equation}
We use here the notation $D=i^{-1}\partial$. The $w_k(a,b)$ with $k$ even are symmetric in $a$, $b$ and skew-symmetric for $k$ odd. In particular, we have
\begin{equation}\label{eq.r}
a\sharp b-b\sharp a=\frac{1}{i}\{a,b\}+\tilde r,\qquad \tilde r\in S(m_1m_2\lambda_\Gamma^{-3},\Gamma),
\end{equation}
where $\{\ ,\ \}$ is the Poisson bracket, implying that
$[a^w,b^w]=\frac{1}{i}\{a,b\}^w+\tilde r^w.$


%

%

The symbols in $S(1,\Gamma)$ are quantified in bounded operators on $L^2(\R^n)$, with operator norm depending on the structure constants of $\Gamma$ defined in \eqref{def.adm.metric} and a semi-norm \eqref{def.seminorm} of the symbol in $S(1,\Gamma)$, whose order depends only on the dimension $n$ and the structure constants of $\Gamma$. See \cite{D2}.

\begin{prop}[Fefferman-Phong inequality]\label{prop.fp}
If $a\in S(\lambda_\Gamma^2,\Gamma)$ and $a\geq0$, then $a^w$ is bounded from below by a constant depending on the structure constants of $\Gamma$ given in \eqref{def.adm.metric} and a semi-norm \eqref{def.seminorm} of the symbol $a$ in $S(\lambda_\Gamma^2,\Gamma)$, whose order depends only on the dimension $n$ and the structure constants of $\Gamma$.
\end{prop}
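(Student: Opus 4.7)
The plan is to reduce Proposition~\ref{prop.fp} to the classical flat Fefferman-Phong inequality by a localization argument in the Weyl-H\"ormander calculus, following the strategy of H\"ormander (Vol.~III, Section~18.6) as reformulated by Bony and Lerner. The two main steps are (i) a $\Gamma$-partition of unity on the phase space, which reduces the global estimate to a sum of localized estimates, and (ii) an affine symplectic rescaling that brings each localized piece, after dividing by $\lambda_\Gamma^2$, to a universal semi-classical problem on $L^2(\R^n)$.

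First, using the admissibility of $\Gamma$ (slowness plus temperance), one constructs a partition of unity $\sum_\nu \varphi_\nu^2 = 1$ with $\varphi_\nu \in S(1,\Gamma)$ supported in confining $\Gamma$-balls $B_\nu$ centered at points $X_\nu$, having uniformly bounded overlap and uniformly bounded semi-norms. Since $a \in S(\lambda_\Gamma^2, \Gamma)$, every remainder produced by commuting $\varphi_\nu^w$ past $a^w$ gains a factor $\lambda_\Gamma^{-2}$, which exactly compensates the growth of $a$. One therefore obtains
\begin{equation*}
a^w = \sum_\nu \varphi_\nu^w a^w \varphi_\nu^w + R^w, \qquad R \in S(1,\Gamma),
\end{equation*}
with $R^w$ bounded on $L^2(\R^n)$ with operator norm controlled by the structure constants of $\Gamma$ and by finitely many semi-norms of $a$ in $S(\lambda_\Gamma^2,\Gamma)$. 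In each box $B_\nu$, an affine symplectic change of variables diagonalizing $\Gamma_{X_\nu}$ acts by a unitary metaplectic conjugation on $L^2(\R^n)$; setting $h_\nu = \lambda_\Gamma(X_\nu)^{-1} \leq 1$ and rescaling accordingly, the symbol $b_\nu(Y) = \lambda_\Gamma(X_\nu)^{-2} (\varphi_\nu^2 a)(X_\nu + T_\nu Y)$ is non-negative and lies in $C_b^\infty(\R^{2n})$ with derivatives uniformly bounded by semi-norms of $a$ and of the $\varphi_\nu$. Up to unitary conjugation, $(\varphi_\nu^w a^w \varphi_\nu^w)$ is therefore $\lambda_\Gamma(X_\nu)^2$ times the $h_\nu$-semi-classical Weyl quantization of $b_\nu$.

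The main obstacle is the flat Fefferman-Phong inequality: any non-negative $b \in C_b^\infty(\R^{2n})$ satisfies $b^w \geq -C$ on $L^2(\R^n)$, where $C$ depends only on $n$ and on a finite number of $L^\infty$-norms of derivatives of $b$. I would invoke Bony's proof via Wick quantization: a non-negative symbol has non-negative Wick quantization, and the difference $b^w - b^{\mathrm{Wick}}$ admits a symbolic expansion whose leading error is controlled by $\|\nabla^2 b\|_{L^\infty}$, itself dominated by $\|\nabla^4 b\|_{L^\infty}^{1/2}\|b\|_{L^\infty}^{1/2}$ via a Glaeser-type inequality. This yields the universal lower bound $b^w \geq -C$, uniform in $h_\nu \leq 1$ by the semi-classical scaling. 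Summing the local bounds using the bounded overlap of the $\{\varphi_\nu\}$ and absorbing $R^w$ then produces the global lower bound $a^w \geq -C'$, with $C'$ controlled by the structure constants of $\Gamma$ and by a finite-order semi-norm of $a$ in $S(\lambda_\Gamma^2,\Gamma)$, as claimed.
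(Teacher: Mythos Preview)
The paper does not prove Proposition~\ref{prop.fp}; it is stated without proof in the appendix as one of the ``facts about the Weyl calculus, which can be found in \cite[Chapter~18]{Hor3} as well as in \cite[Chapter~2]{Lerner}.'' Your sketch reproduces the standard localization-and-rescaling argument from those references (partition of unity adapted to $\Gamma$, metaplectic reduction to the flat model, then the flat Fefferman--Phong bound), so there is nothing to compare against in the paper itself.

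One small comment on the sketch: the sentence invoking a ``Glaeser-type inequality'' to control $\|\nabla^2 b\|_{L^\infty}$ by $\|\nabla^4 b\|_{L^\infty}^{1/2}\|b\|_{L^\infty}^{1/2}$ is a Landau--Kolmogorov interpolation rather than Glaeser, and in any case is not needed for the proposition as stated, which only asks that the lower bound depend on \emph{some} finite-order semi-norm of $a$. The Wick-quantization step already gives $b^w \geq b^{\mathrm{Wick}} + r^w \geq r^w$ with $r \in S(1,|dX|^2)$ controlled by finitely many derivatives of $b$, and that suffices. The sharper statement that four derivatives are enough is Bony's refinement, but the paper does not require it.
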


\subsection{For the operator $(k^2+D_t^2)^{-1}$}
\begin{lem}\label{lem.Dk1}
For $k\geq1$, we have
$$(\tau^2+k^2)^{-1}\in S\big((\tau^2+k^2)^{-1},\frac{|d\tau|^2}{\tau^2+k^2}\big)\subset S(k^{-2},\frac{|d\tau|^2}{\tau^2+k^2},),$$
with semi-norms bounded above independently of $k$.
Moreover, the Fourier multiplier $\Dk=(k^2+D_t^2)^{-1}$ is bounded on $L^2(\R;dt)$ with ${\mathcal L}(L^2(\R;dt))$-norm bounded by $k^{-2}$.
\end{lem}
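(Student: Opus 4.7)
The plan is to handle the statement in three small steps: verify the symbol class membership by differentiating the symbol directly, deduce the inclusion between symbol classes from a pointwise weight comparison, and obtain the $L^2$-boundedness by Plancherel.

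First I would prove the semi-norm estimate
\begin{equation*}
\bigl|\partial_\tau^n (\tau^2+k^2)^{-1}\bigr| \leq C_n (\tau^2+k^2)^{-1-n/2},\qquad n\geq 0,
\end{equation*}
with $C_n$ independent of $k\geq 1$. This is exactly what membership in $S\bigl((\tau^2+k^2)^{-1},\frac{|d\tau|^2}{\tau^2+k^2}\bigr)$ demands, since the $\Gamma_\tau$-unit ball has radius $(\tau^2+k^2)^{1/2}$. The cleanest route is Fa\`a di Bruno applied to the composition $\tau\mapsto u(\tau)=\tau^2+k^2 \mapsto u^{-1}$: only $u'=2\tau$ and $u''=2$ are nonzero, so every term in $\partial_\tau^n u^{-1}$ has the shape $c\,\tau^{m_1}(\tau^2+k^2)^{-1-m_1-m_2}$ with $m_1+2m_2=n$. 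Using $|\tau|^{m_1}\leq(\tau^2+k^2)^{m_1/2}$ one obtains the bound $|c|(\tau^2+k^2)^{-1-n/2}$, with $c$ purely combinatorial and hence independent of $k$.

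The inclusion $S\bigl((\tau^2+k^2)^{-1},\Gamma\bigr)\subset S(k^{-2},\Gamma)$ is then immediate from the pointwise comparison $(\tau^2+k^2)^{-1}\leq k^{-2}$: any estimate of a derivative by $C_n(\tau^2+k^2)^{-1}(\tau^2+k^2)^{-n/2}$ is also an estimate by $C_n k^{-2}(\tau^2+k^2)^{-n/2}$. For the last assertion, $\Dk$ is the Fourier multiplier with symbol $(\tau^2+k^2)^{-1}\leq k^{-2}$, so Plancherel gives
\begin{equation*}
\|\Dk u\|_{L^2(\R;dt)}^2 = \bigl\|(\tau^2+k^2)^{-1}\widehat u\bigr\|_{L^2(\R)}^2 \leq k^{-4}\|u\|_{L^2(\R;dt)}^2.
\end{equation*}

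There is no real obstacle here; the lemma is a routine symbol estimate. The only point that needs care is tracking the uniformity of the constants in $k\geq 1$, which is automatic in the Fa\`a di Bruno bookkeeping once one absorbs the powers of $\tau$ into powers of $(\tau^2+k^2)^{1/2}$.
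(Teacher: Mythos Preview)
Your argument is correct. The paper arrives at the same derivative estimate $|\partial_\tau^m(\tau^2+k^2)^{-1}|\leq C_m(\tau^2+k^2)^{-1-m/2}$ by a slightly different bookkeeping device: instead of Fa\`a di Bruno it writes $(\tau^2+k^2)^{-1}=k^{-2}\bigl((k^{-1}\tau)^2+1\bigr)^{-1}$ and applies the chain rule under the substitution $\theta=k^{-1}\tau$, so that uniformity in $k$ reduces to the single fixed estimate $|\partial_\theta^m(1+\theta^2)^{-1}|\leq C_m(1+\theta^2)^{-1-m/2}$. Both routes are elementary and yield identical bounds; the scaling trick makes the $k$-independence of the constants visible without any combinatorics, while your Fa\`a di Bruno computation is more explicit about where the exponent $-1-n/2$ comes from. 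The inclusion of symbol classes and the Plancherel step are handled by you exactly as intended (the paper leaves them implicit).
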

\begin{proof}
We see that 
$$\frac{1}{\tau^2+k^2}=\frac{k^{-2}}{(k^{-1}\tau)^2+1}.$$
Then for any $m\geq0$, 
$$\Big|\frac{d^m}{d\tau^m}\big(\frac{1}{\tau^2+k^2}\big)\Big|\leq C_mk^{-2}\big((k^{-1}\tau)^2+1\big)^{-1-m/2}k^{-m}= C_m(k^2+\tau^2)^{-1-m/2},$$
where $C_m$ is a positive constant depending only on $m$. This completes the proof of the lemma.
\end{proof}
We can also compute the kernel of the operator $(k^2+D_t^2)^{-1}$.
\begin{lem}\label{lem.Dk2}
For $k\geq1$, we have
$$ \frac{1}{2k}\int_\R e^{-k|t|}e^{i t\tau}dt=\frac{1}{k^2+\tau^2}.$$
As a consequence, $\Dk$ is just the convolution operator with the function $(2k)^{-1}e^{-k|\cdot|}$.
\end{lem}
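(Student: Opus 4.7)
The plan is a direct computation of the Fourier integral, followed by an identification of the inverse operator $\langle D_k \rangle^{-2}$ via the convolution theorem. First I would split the integral over $\R$ into the two half-lines $\{t \geq 0\}$ and $\{t \leq 0\}$, on each of which $|t|$ reduces to $\pm t$ and the integrand becomes a pure exponential. Concretely,
\[
\frac{1}{2k}\int_\R e^{-k|t|}e^{it\tau}dt = \frac{1}{2k}\int_0^{+\infty} e^{(-k+i\tau)t}dt + \frac{1}{2k}\int_{-\infty}^0 e^{(k+i\tau)t}dt.
\]
Since $k \geq 1 > 0$, both improper integrals converge absolutely, and evaluating them gives $\frac{1}{2k}\bigl(\frac{1}{k-i\tau} + \frac{1}{k+i\tau}\bigr) = \frac{1}{k^2+\tau^2}$.

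For the second assertion, I would recall that by definition $\langle D_k \rangle^{-2} = (k^2+D_t^2)^{-1}$ is the Fourier multiplier with symbol $(k^2 + \tau^2)^{-1}$. By the identity just established, this symbol is precisely the Fourier transform of the function $K(t) = (2k)^{-1} e^{-k|t|}$. Since $K \in L^1(\R;dt)$ (its $L^1$-norm equals $k^{-2}$), the convolution theorem applies on the Schwartz class and then extends to $L^2(\R;dt)$ by density, yielding
\[
\Dk u = K \ast u, \qquad u \in L^2(\R;dt),
\]
which is the claimed identification and additionally reproves the $L^2$-bound $\|\Dk\|_{\mathcal L(L^2(\R;dt))} \leq k^{-2}$ of Lemma \ref{lem.Dk1} via Young's inequality.

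No real obstacle is expected; the only minor care is to check that $k \geq 1$ (actually $k > 0$) ensures absolute convergence of the half-line integrals so that the computation is unambiguous, and to justify the passage from the pointwise Fourier identity to the operator identity by density, which is standard.
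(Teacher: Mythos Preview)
Your proposal is correct and follows essentially the same direct computation as the paper: the paper exploits the evenness of $e^{-k|t|}$ to reduce to $\frac{1}{k}\int_0^{+\infty}e^{-kt}\cos(t\tau)\,dt = \frac{1}{k}\,{\rm Re}\int_0^{+\infty}e^{-t(k+i\tau)}\,dt$, while you split into two half-line integrals and sum $\frac{1}{2k}\bigl(\frac{1}{k-i\tau}+\frac{1}{k+i\tau}\bigr)$; these are the same calculation organized slightly differently. Your added remarks on the convolution theorem and the $L^2$-bound via Young's inequality are correct and make explicit what the paper leaves implicit.
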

\begin{proof}
We have
$$
 \frac{1}{2k}\int_\R e^{-k|t|}e^{i t\tau}dt=\frac{1}{k}\int_0^{+\infty}e^{-kt}\cos(t\tau)dt=\frac{1}{k}{\rm Re}\int_0^{+\infty}e^{-t(k+i\tau)}=\frac{1}{k^2+\tau^2}.\qedhere
$$
\end{proof}
As a corollary of the lemma, the following result is used in the proof of Case 3.
\begin{lem}\label{lem.Dk3}
For $k\geq3$, the operator $e^{-2t}\Dk e^{2t}$ is bounded on $L^2(\R;dt)$ with ${\mathcal L}(L^2(\R;dt))$-norm bounded above by $3k^{-2}$.
\end{lem}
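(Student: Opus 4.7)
\medskip

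The plan is to exploit Lemma \ref{lem.Dk2}, which identifies $\Dk$ as convolution with the explicit $L^1$ kernel $(2k)^{-1}e^{-k|\cdot|}$, and reduce the boundedness of the conjugated operator to Young's inequality.

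First, using Lemma \ref{lem.Dk2}, for $f \in L^2(\R;dt)$ we have
$$(\Dk f)(t) = \int_\R \frac{1}{2k}e^{-k|t-s|}f(s)\,ds,$$
so the conjugation $e^{-2t}\Dk e^{2t}$ produces the integral operator
$$(e^{-2t}\Dk e^{2t}f)(t) = \int_\R \frac{1}{2k}e^{-k|t-s|}e^{-2(t-s)}f(s)\,ds = (h_k * f)(t),$$
where $h_k(u) = (2k)^{-1}e^{-k|u|-2u}$. Thus $e^{-2t}\Dk e^{2t}$ is a convolution operator, and by Young's inequality its $\mathcal{L}(L^2(\R;dt))$-norm is bounded above by $\|h_k\|_{L^1(\R)}$.

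Next I would compute $\|h_k\|_{L^1}$ by splitting the integral at $u=0$. On $[0,+\infty)$ the integrand is $e^{-(k+2)u}$ and on $(-\infty,0]$ it is $e^{(k-2)u}$; the second integral converges precisely because we assumed $k \geq 3 > 2$. Summing the two contributions gives
$$\|h_k\|_{L^1} = \frac{1}{2k}\Bigl(\frac{1}{k+2}+\frac{1}{k-2}\Bigr) = \frac{1}{k^2-4}.$$

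Finally, to match the constant $3k^{-2}$ in the statement, I would observe that $k \geq 3$ implies $k^2 \geq 6$, hence $2k^2 \geq 12$, i.e.\ $3(k^2-4) \geq k^2$, which yields $\frac{1}{k^2-4} \leq \frac{3}{k^2}$. No genuine obstacle arises; the only subtle point is that the requirement $k \geq 3$ (rather than $k \geq 1$) is forced at exactly the step where the $e^{(k-2)u}$ tail must be integrable on $(-\infty,0]$, and the numerical constant $3$ is tight enough at $k=3$ to absorb the resulting $(k^2-4)^{-1}$ into $3k^{-2}$.
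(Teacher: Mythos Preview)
Your proof is correct and follows essentially the same approach as the paper: both identify the conjugated operator via Lemma~\ref{lem.Dk2} as convolution with the kernel $(2k)^{-1}e^{-k|u|-2u}$ and bound its operator norm by an $L^1$ norm. The only minor difference is that the paper majorizes the kernel by the symmetric function $(2k)^{-1}e^{-(k-2)|u|}$ to obtain the slightly cruder bound $\frac{1}{k(k-2)}$, whereas you compute the exact $L^1$ norm $\frac{1}{k^2-4}$; both quantities are $\le 3k^{-2}$ for $k\ge 3$.
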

\begin{proof}
We deduce from the previous lemma that the operator $e^{-2t}\Dk e^{2t}$ has kernel
$$T_k(t,s)=\frac{1}{2k}e^{-2t}e^{-k|t-s|}e^{2s}=\frac{1}{2k}e^{-k|t-s|-2(t-s)}.$$
We have
$$|T_k(t,s)|\leq \frac{1}{2k}e^{-(k-2)|t-s|}.$$
If $k\geq3$, then the convolution with $\frac{1}{2k}e^{-(k-2)|\cdot|}$ is bounded on $L^2(\R;dt)$ with norm
$$\frac{1}{2k}\|e^{-(k-2)|\cdot|}\|_{L^1(\R;dt)}=\frac{1}{k(k-2)},$$
which is smaller than $3k^{-2}$ since $k\geq3$. This completes the proof of the lemma.
\end{proof}

\subsection{Some inequalities}\label{sec.ineq}

We present some inequalities that we have used in the proof. Recall the functions $\sigma,g$ given in \eqref{eq.sigma}
$$\sigma(r)=\frac{1-e^{-r^2/4}}{r^2/4},\quad g(r)=e^{-r^2/8},\quad r>0.$$ 
Firstly, a calculation shows that $$\inf_{\theta>0}\theta^{-2}(e^\theta-1)\simeq1.54414\dots$$ so that
\begin{equation}\label{sigma.1}
\forall r>0,\quad\delta r^2g(r)^2\leq \sigma(r),\quad\text{with } \delta\simeq \frac{1}{4}\times1.54414.
\end{equation}
We verify easily 
\begin{equation}\label{sigma.3}
\forall r>0,\quad r^2g(r)\leq 16\sigma(r),\quad r^2g(r)^2\leq 8\big(1-\sigma(r)\big).
\end{equation}
%
We can get by induction on $n\in\N$ that 
\begin{equation}\label{sigma.d}
\sigma^{(n)}(r)=(-1)^n4r^{-n-2}\big((n+1)!-p_n(r)e^{-r^2/4}\big),
\end{equation}
where $p_n$ is a polynomial of degree $2n$. 
In particular, we have
\begin{equation}\label{sigma'}
\forall r>0,\quad\sigma'(r)=-\frac{8}{r^3}\big(1-e^{-r^2/4}-\frac{r^2}{4}e^{-r^2/4}\big)<0,
\end{equation}
so that $\sigma$ is decreasing.
We have the Taylor expansion of $\sigma$ near 0
\begin{equation}\label{sigma.Taylor}
\sigma(r)=1-\frac{1}{2}\cdot\frac{r^2}{4}+\frac{1}{3!}\Big(\frac{r^2}{4}\Big)^2-\dots+\frac{(-1)^l}{(l+1)!}\Big(\frac{r^2}{4}\Big)^{l}+O(r^{2l+2}),\quad \text{as }r\to0.
\end{equation}

\begin{lem}
For all $k\geq1$, $\alpha\geq8\pi$, $r>0$, we have
\begin{equation}\label{sigma.2}
\begin{cases}
\displaystyle r^2+\beta_k\sigma(r)\geq (2\log2)^{-1}\beta_k^{1/2},\\ 
\displaystyle\frac{k^2}{r^2}+\beta_k\big(1-\sigma(r)\big)\geq e^{-1}\beta_k^{1/2},
\end{cases}
\end{equation}
where $\beta_k=\alpha k/(8\pi)$ is given in \eqref{eq.beta} and $\sigma(r)$ is given in \eqref{eq.sigma}.
\end{lem}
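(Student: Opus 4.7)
The strategy is to prove each of the two inequalities in \eqref{sigma.2} by splitting the range of $r>0$ at a natural transition point where $\sigma$ takes precisely a value matching the target constant, so that on one side the monotonicity of $\sigma$ (known from \eqref{sigma'}) gives the bound directly, and on the other side an AM-GM estimate on two positive terms finishes the job. Throughout, I use that $\beta_k=\alpha k/(8\pi)\geq 1$ (since $\alpha\geq 8\pi$ and $k\geq 1$), so in particular $\beta_k\geq \beta_k^{1/2}$.

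\textbf{First inequality.} The transition point is $r_\ast=2\sqrt{\log 2}$, at which $\sigma(r_\ast)=4\cdot(1/2)/(4\log 2)=(2\log 2)^{-1}$, which is exactly the target constant. For $r\leq r_\ast$, the monotonicity of $\sigma$ yields $\sigma(r)\geq (2\log 2)^{-1}$, so
\begin{equation*}
r^2+\beta_k\sigma(r)\geq \beta_k\sigma(r)\geq \frac{\beta_k}{2\log 2}\geq \frac{\beta_k^{1/2}}{2\log 2}.
\end{equation*}
For $r\geq r_\ast$, the bound $e^{-r^2/4}\leq 1/2$ gives $\sigma(r)=4(1-e^{-r^2/4})/r^2\geq 2/r^2$, and AM-GM produces
\begin{equation*}
r^2+\beta_k\sigma(r)\geq r^2+\frac{2\beta_k}{r^2}\geq 2\sqrt{2}\,\beta_k^{1/2},
\end{equation*}
which exceeds $(2\log 2)^{-1}\beta_k^{1/2}$ since $2\sqrt{2}>1/(2\log 2)$.

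\textbf{Second inequality.} The transition point is $r_\ast=2$, at which $1-\sigma(2)=1-(1-e^{-1})=e^{-1}$. For $r\geq 2$, monotonicity of $\sigma$ gives $1-\sigma(r)\geq e^{-1}$, hence $\beta_k(1-\sigma(r))\geq \beta_k/e\geq \beta_k^{1/2}/e$. For $r\leq 2$, I will first establish the Taylor-type lower bound $1-\sigma(r)\geq r^2/12$. This follows from the elementary inequality $e^{-x}\geq 1-x+x^2/2-x^3/6$, valid for all $x\geq 0$ (the third derivative of the difference is $1-e^{-x}\geq 0$, so both lower-order derivatives vanish at $0$ and stay non-negative). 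Applied with $x=r^2/4\in[0,1]$, this gives
\begin{equation*}
\frac{r^2}{4}-\bigl(1-e^{-r^2/4}\bigr)\geq \frac{x^2}{2}-\frac{x^3}{6}\geq x^2\Bigl(\frac{1}{2}-\frac{1}{6}\Bigr)=\frac{x^2}{3},
\end{equation*}
and dividing by $r^2/4$ yields $1-\sigma(r)\geq x/3=r^2/12$. Then AM-GM produces
\begin{equation*}
\frac{k^2}{r^2}+\beta_k\bigl(1-\sigma(r)\bigr)\geq \frac{k^2}{r^2}+\frac{\beta_k r^2}{12}\geq 2k\sqrt{\beta_k/12}=\frac{k\,\beta_k^{1/2}}{\sqrt{3}}\geq \frac{\beta_k^{1/2}}{\sqrt{3}}>e^{-1}\beta_k^{1/2},
\end{equation*}
where I used $k\geq 1$ and $\sqrt{3}<e$.

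\textbf{Main difficulty.} There is no real obstacle. The only step requiring a moment of thought is to identify the transition points so that the somewhat awkward constants $(2\log 2)^{-1}$ and $e^{-1}$ appear exactly; once they are recognized as $\sigma(2\sqrt{\log 2})$ and $1-\sigma(2)$ respectively, the proof is a routine combination of monotonicity of $\sigma$, a very mild Taylor estimate for small $r$, and AM-GM.
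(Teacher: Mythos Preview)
Your proof is correct and follows essentially the same approach as the paper: the same transition points $r_\ast=2\sqrt{\log 2}$ and $r_\ast=2$, the same use of monotonicity of $\sigma$ on one side and AM--GM on the other, and the same reduction $\beta_k\geq\beta_k^{1/2}$ from $\beta_k\geq 1$. The only cosmetic difference is that for $r\leq 2$ the paper uses the simpler estimate $e^{-\theta}-1+\theta\geq\theta^2/4$ (giving $1-\sigma(r)\geq r^2/16$ and the AM--GM constant $\tfrac{1}{2}k\beta_k^{1/2}$), whereas you push one more Taylor term to get $1-\sigma(r)\geq r^2/12$; both are immediately sufficient.
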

\begin{proof}
By the definition of $\sigma$, we have
\begin{align*}
 \text{if}\ r\leq 2(\log2)^{1/2},&\quad \text{then}\ \sigma(r)\geq \sigma\big(2(\log2)^{1/2}\big)=(2\log2)^{-1},\\ 
\text{if}\ r> 2(\log2)^{1/2},&\quad \text{then}\ e^{-r^2/4}<\frac12,\text{ implying } \sigma(r)> 2r^{-2}.
\end{align*}
Therefore, we get
\begin{align*}
 \text{if}\ r\leq 2(\log2)^{1/2},&\quad \text{then}\ r^2+\beta_k\sigma(r)\geq (2\log2)^{-1}\beta_k,\\ 
\text{if}\ r> 2(\log2)^{1/2},&\quad \text{then}\ r^2+\beta_k\sigma(r)\geq r^2+2\beta_k r^{-2}\geq 2\sqrt2\beta_k^{1/2},
\end{align*}
which implies for any $r\geq0$,
\begin{align*}
r^2+\beta_k\sigma(r)&\geq \min\big((2\log2)^{-1}\beta_k,2\sqrt2\beta_k^{1/2}\big)\geq(2\log2)^{-1}\beta_k^{1/2},\quad \text{if }\alpha\geq8\pi,
\end{align*}
proving the first inequality in \eqref{sigma.2}.
Noting that 
$$\forall\  0\leq\theta\leq1,\quad e^{-\theta}-1+\theta\geq \frac{\theta^2}{4},$$
we have 
$$\forall 0<r\leq2,\quad1-\sigma(r)=\frac{4}{r^2}\big(e^{-r^2/4}-1+\frac{r^2}{4}\big)\geq \frac{r^2}{16}.$$
Then we obtain
\begin{align*}
\text{if}\ r\leq 2,&\quad \frac{k^2}{r^2}+\beta_k\big(1-\sigma(r)\big)\geq\frac{k^2}{r^2}+\beta_k\frac{r^2}{16}\geq \frac{1}{2}k\beta_k^{1/2},\\
\text{if}\ r> 2,&\quad \frac{k^2}{r^2}+\beta_k\big(1-\sigma(r)\big)\geq\beta_k\big(1-\sigma(2)\big)=e^{-1}\beta_k.
\end{align*}
Therefore for any $r\geq0$, $k\geq1$, 
\begin{align*}
\frac{k^2}{r^2}+\beta_k\big(1-\sigma(r)\big)&\geq\min\big(\frac{1}{2}k\beta_k^{1/2},e^{-1}\beta_k\big)\geq e^{-1}\beta_k^{1/2},\quad\text{if }\alpha\geq8\pi,
\end{align*}
which proves the second inequality in \eqref{sigma.2}.
\end{proof}


%


Now we prove inequalities about the function $\sigma$ that are used in the proof of Case 1, 2 and 3.
\begin{prop}\label{prop.sigma}
Given $\epsilon_0,\epsilon_1\in(0,1)$, there exist $c_0\in(0,1)$, $C_1,C_2,C_3,\tilde C_n>0$, $c_1,c_2,c_3\in(0,1)$ satisfying the following. Recall that $\sigma$ is given in \eqref{eq.sigma}.
\begin{enumerate}
\item For $e^{t_k}>\epsilon_0^{-1}$, we have
\begin{equation}\label{eq.case1sigma}
\forall |t-t_k|\leq 2c_0,\quad\frac{d}{dt}\Big[e^{2t}\big(\sigma(e^t)-\sigma(e^{t_k})\big)\Big]\leq -C_1,
\end{equation}
\begin{equation}\label{eq.case1sigma.1}
\displaystyle\forall |t-t_k|\leq3c_0,\ \forall n\in\N,\quad\Big|\frac{d^n}{dt^n}\Big[e^{2t}\big(\sigma(e^t)-\sigma(e^{t_k})\big)\Big]\Big|\leq \tilde C_n,
\end{equation}
\begin{equation}\label{eq.case1sigma.2}
\text{and}\qquad
\begin{cases}
\displaystyle\sigma(e^t)-\sigma(e^{t_k})\leq-c_1\sigma(e^{t_k}),\quad &\text{for }t-t_k\geq c_0/2,\\
\displaystyle\sigma(e^t)-\sigma(e^{t_k})\geq c_1\sigma(e^{t}),\quad &\text{for }t-t_k\leq -c_0/2.
\end{cases}
\end{equation} 
\item For $\epsilon_1\leq e^{t_k}\leq\epsilon_0^{-1}$, we have
\begin{equation}\label{eq.case2sigma}
 \forall |t-t_k|\leq 2c_0,\quad \frac{d}{dt}\Big[e^{2t}\big(\sigma(e^t)-\sigma(e^{t_k})\big)\Big]\leq -C_2 ,
\end{equation}
\begin{equation}\label{eq.case2sigma.1}
\forall |t-t_k|\leq3c_0,\ \forall n\in\N,\quad\Big|\frac{d^n}{dt^n}\Big[e^{2t}\big(\sigma(e^t)-\sigma(e^{t_k})\big)\Big]\Big|\leq\tilde C_n,
\end{equation}
\begin{equation}\label{eq.case2sigma.2}
\text{and}\qquad
\begin{cases}
\displaystyle\sigma(e^t)-\sigma(e^{t_k})\leq -c_2\sigma(e^{t_k}),&\text{for }t-t_k\geq c_0/2,\\
\displaystyle\sigma(e^t)-\sigma(e^{t_k})\geq c_2\sigma(e^t),&\text{for }t-t_k\leq-c_0/2.
\end{cases}
\end{equation}
\item For $ e^{t_k}<\epsilon_1$, we have
\begin{equation}\label{eq.case3sigma}
\forall |t-t_k|\leq 2c_0,\quad\frac{d}{dt}\Big[e^{2t}\big(\sigma(e^t)-\sigma(e^{t_k})\big)\Big]\leq -C_3 e^{4t_k}, 
\end{equation}
\begin{equation}\label{eq.case3sigma.1}
\forall |t-t_k|\leq3c_0,\ \forall n\in\N,\quad\Big|\frac{d^n}{dt^n}\Big[e^{2t}\big(\sigma(e^t)-\sigma(e^{t_k})\big)\Big]\Big|\leq\tilde C_ne^{4t_k},
\end{equation}
\begin{equation}\label{eq.case3sigma.2}
\text{and}\qquad
\begin{cases}
\displaystyle\sigma(e^t)-\sigma(e^{t_k})\leq -c_3\big(1-\sigma(e^{t})\big),&\text{for }t-t_k\geq c_0/2,\\
\displaystyle\sigma(e^t)-\sigma(e^{t_k})\geq c_3\big(1-\sigma(e^{t_k})\big),&\text{for }t-t_k\leq-c_0/2.
\end{cases}
\end{equation}
\end{enumerate}
\end{prop}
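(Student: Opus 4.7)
The plan is to exploit the explicit identity $e^{2t}\sigma(e^t) = 4\bigl(1 - e^{-e^{2t}/4}\bigr)$, which follows directly from $\sigma(r) = 4(1-e^{-r^2/4})/r^2$. Differentiating yields
\[
\frac{d}{dt}\Big[e^{2t}\bigl(\sigma(e^t) - \sigma(e^{t_k})\bigr)\Big] = 2e^{2t}\bigl[g(e^t)^2 - \sigma(e^{t_k})\bigr],
\]
and at $t = t_k$ this equals $2e^{2t_k}\bigl[g(e^{t_k})^2 - \sigma(e^{t_k})\bigr]$. Setting $u = e^{2t_k}/4$, I have $g(e^{t_k})^2 = e^{-u}$ and $\sigma(e^{t_k}) = (1-e^{-u})/u$, whence
\[
g(e^{t_k})^2 - \sigma(e^{t_k}) = \frac{(u+1)e^{-u} - 1}{u},
\]
which is strictly negative for every $u > 0$ by the elementary inequality $e^u > 1 + u$. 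This is the structural origin of the sign appearing in \eqref{eq.case1sigma}, \eqref{eq.case2sigma}, \eqref{eq.case3sigma}.

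To obtain the quantitative estimates, I treat each regime of $u$ separately. In Cases 1 and 2, the variable $u$ lives in $(\epsilon_0^{-2}/4, +\infty)$ and in the compact interval $[\epsilon_1^2/4, \epsilon_0^{-2}/4]$ respectively, on which $(u+1)e^{-u} - 1$ is bounded strictly below zero (it is continuous, strictly negative, and tends to $-1$ as $u\to\infty$); hence the derivative at $t_k$ is bounded above by a negative constant depending only on $\epsilon_0, \epsilon_1$. In Case 3, where $u$ is small, the Taylor expansion $(u+1)e^{-u} = 1 - u^2/2 + O(u^3)$ gives $g(e^{t_k})^2 - \sigma(e^{t_k}) = -u/2 + O(u^2) \sim -e^{2t_k}/8$, so the derivative is of order $-e^{4t_k}$, which is exactly the scale claimed in \eqref{eq.case3sigma}. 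To extend these bounds from $t = t_k$ to $|t-t_k| \leq 2c_0$, I choose $c_0$ small enough (uniformly within each regime) to absorb the variation through the smooth dependence on $t$; the higher-derivative bounds \eqref{eq.case1sigma.1}, \eqref{eq.case2sigma.1}, \eqref{eq.case3sigma.1} follow similarly, since each $t$-derivative of $4(1 - e^{-e^{2t}/4}) - e^{2t}\sigma(e^{t_k})$ is a polynomial in $e^{2t}$ times $e^{-e^{2t}/4}$ plus a term linear in $\sigma(e^{t_k})$, which is bounded uniformly on $|t-t_k| \leq 3c_0$ and scales like $e^{4t_k}$ in Case 3 by the same Taylor analysis.

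For the change-of-sign inequalities \eqref{eq.case1sigma.2}, \eqref{eq.case2sigma.2}, \eqref{eq.case3sigma.2}, I use the strict decrease of $\sigma$ from \eqref{sigma'}. For $t \geq t_k + c_0/2$, $\sigma(e^t)/\sigma(e^{t_k}) < 1$, and the gap is bounded below uniformly in Cases 1 and 2 by compactness of the parameter (in Case 1 using $\sigma(r)\sim 4/r^2$ at infinity to control the ratio uniformly for large $e^{t_k}$). Case 3 is more delicate: since $\sigma(e^{t_k})$ is close to $1$, I rewrite $\sigma(e^t) - \sigma(e^{t_k}) = (1 - \sigma(e^{t_k})) - (1 - \sigma(e^t))$ and compare $1 - \sigma(e^t)$ with $1 - \sigma(e^{t_k})$ via the Taylor expansion \eqref{sigma.Taylor} ($1 - \sigma(r) = r^2/8 + O(r^4)$) together with the asymptotic $1 - \sigma(r) \to 1$ as $r\to\infty$, splitting into subregimes according to whether $e^t$ is close to $e^{t_k}$ or much larger. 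The main obstacle is ensuring all constants $c_0, C_j, c_j, \tilde C_n$ are truly uniform in each regime; Case 3 requires careful tracking of the leading $e^{4t_k}$ behavior through every Taylor expansion and ratio.
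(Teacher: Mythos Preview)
Your proposal is correct and follows essentially the same strategy as the paper: analyze the $t$-derivative near $t_k$ to fix a small $c_0$, then establish the ratio inequalities via monotonicity and asymptotics of $\sigma$. Your closed-form identity $\tfrac{d}{dt}\bigl[e^{2t}(\sigma(e^t)-\sigma(e^{t_k}))\bigr]=2e^{2t}\bigl[g(e^t)^2-\sigma(e^{t_k})\bigr]$ is a slightly cleaner repackaging of the paper's splitting $e^{3t}\sigma'(e^t)+2e^{2t}(\sigma(e^t)-\sigma(e^{t_k}))$ (the two are algebraically identical via $r\sigma'(r)=2(g(r)^2-\sigma(r))$); the paper then makes Step~3 explicit through the auxiliary functions $f_1(\theta;\lambda)=\tfrac{1-\theta^\lambda}{\lambda(1-\theta)}$ and $f_2(x;\lambda)=\tfrac{\lambda(e^{-x}-1+x)}{e^{-\lambda x}-1+\lambda x}$, which encode exactly the compactness-plus-asymptotic arguments you sketch.
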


\begin{proof}
{\it Step 1.}
The essential step is to choose $c_0\in(0,1)$ such that \eqref{eq.case1sigma}, \eqref{eq.case2sigma} and \eqref{eq.case3sigma} hold.

By \eqref{sigma'}, given $\epsilon_0,\epsilon_1\in(0,1)$, there exist $\mu_1>\mu_2>0$ such that 
\begin{align}
\notag&\forall r>e^{-2}\epsilon_0^{-1}, \quad\qquad-\mu_1r^{-3}\leq\sigma'(r)\leq -\mu_2r^{-3},\\
\notag&\forall r\in[e^{-2}\epsilon_1,e^2\epsilon_0^{-1}],\quad-\mu_1\leq\sigma'(r)\leq -\mu_2,\\
\label{eq.C2}&\forall r<e^{2}\epsilon_1,\ \qquad\qquad-\mu_1r\leq\sigma'(r)\leq-\mu_2r.
\end{align}
Let us denote
$$f(t,t_k):=\frac{d}{dt}\Big[e^{2t}\big(\sigma(e^t)-\sigma(e^{t_k})\big)\Big]=e^{3t}\sigma'(e^t)+2e^{2t}\big(\sigma(e^t)-\sigma(e^{t_k})\big).$$
The Taylor's formula gives
$$\sigma(e^t)-\sigma(e^{t_k})=\int_0^1 \sigma'(e^{ t_k+\theta(t-t_k)})e^{ t_k+\theta(t-t_k)}(t-t_k)d\theta.$$
Suppose $|t-t_k|\leq 2c_0$ with $c_0<1$. Using \eqref{eq.C2}, we get the following
\begin{itemize}
\item if $e^{t_k}>\epsilon_0^{-1}$, then $e^{t}>e^{-2}\epsilon_0^{-1}$ and
$$f(t,t_k)\leq-\mu_2+4\mu_1e^{4c_0}c_0;$$
\item if $e^{t_k}\in[\epsilon_1,\epsilon_0^{-1}]$, then $e^t\in[e^{-2}\epsilon_1,e^2\epsilon_0^{-1}]$ and 
$$ f(t,t_k)\leq-\mu_2e^{3t}+4\mu_1e^{3t}e^{2c_0}c_0=-(\mu_2-4\mu_1e^{2c_0}c_0)e^{3t}; $$
\item if $e^{t_k}<\epsilon_1$, then $e^t<e^{2}\epsilon_1$ and
$$f(t,t_k)\leq-\mu_2e^{4t}+4\mu_1e^{4t}e^{4c_0} c_0=-(\mu_2-4\mu_1e^{4c_0}c_0)e^{4t}.$$
\end{itemize}
Let $c_0\in(0,1)$ satisfying
$$4c_0e^{4c_0}\leq\frac{\mu_2}{2\mu_1},$$
then we get \eqref{eq.case1sigma}, \eqref{eq.case2sigma}, \eqref{eq.case3sigma} with $$C_1=\mu_2/2,\quad C_2=\mu_2e^{-6}\epsilon_1^3/2\quad\text{and}\quad C_3=\mu_2e^{-8c_0}/2.$$

{\it Step 2.} The inequalities \eqref{eq.case1sigma.1}, \eqref{eq.case2sigma.1} and \eqref{eq.case3sigma.1} are consequences of \eqref{sigma.d}, noticing that for $|t-t_k|\leq3c_0$,
$$\big|\sigma(e^t)-\sigma(e^{t_k})\big|\leq
\begin{cases}
Ce^{-2t_k},\quad &\text{if }e^{t_k}>\epsilon_0^{-1},\\
C,\quad &\text{if }e^{t_k}\in[\epsilon_1,\epsilon_0^{-1}],\\
Ce^{2t_k},\quad &\text{if }e^{t_k}<\epsilon_1.
\end{cases}$$

\medskip

{\it Step 3.} It remains to prove \eqref{eq.case1sigma.2}, \eqref{eq.case2sigma.2} and \eqref{eq.case3sigma.2}. 
Denoting $r_k=e^{t_k}$ and $r=e^t$, then \eqref{eq.case1sigma.2}, \eqref{eq.case2sigma.2} are equivalent to the following 
\begin{equation}\label{eq.case1sigma.2'}
\forall r_k>\epsilon_0^{-1},\qquad
\begin{cases}
\displaystyle\sigma(r)\leq(1-c_1)\sigma(r_k),\quad &\text{for }r\geq r_ke^{c_0/2},\\
\displaystyle\sigma(r_k)\leq(1-c_1)\sigma(r),\quad &\text{for }r\leq r_ke^{-c_0/2}.
\end{cases}
\end{equation}
\begin{equation}\label{eq.case2sigma.2'}
\forall r_k\in[\epsilon_1,\epsilon_0^{-1}],\qquad
\begin{cases}
\displaystyle\sigma(r)\leq(1-c_2)\sigma(r_k),\quad &\text{for }r\geq r_ke^{c_0/2},\\
\displaystyle\sigma(r_k)\leq(1-c_2)\sigma(r),\quad &\text{for }r\leq r_ke^{-c_0/2}.
\end{cases}
\end{equation}
The function $\sigma$ is decreasing, so that in order to prove \eqref{eq.case1sigma.2'} and \eqref{eq.case2sigma.2'}, it suffices to prove the following
\begin{equation}\label{eq.C3}
\exists c\in(0,1),\ \forall r\geq\epsilon_1e^{c_0/2},\quad \sigma(re^{c_0/2})\leq(1-c)\sigma(r).
\end{equation}
We know that for any $\lambda>1$, the function
$$[0,1]\ni\theta\mapsto f_1(\theta;\lambda)=\frac{1-\theta^\lambda}{\lambda(1-\theta)}$$
is strictly increasing in $(0,1)$ and $f_1(1;\lambda)=1$. Hence for all $\lambda>1$, there exists $0<\delta_1(\lambda)<1$ such that 
$$\forall 0<\theta\leq \exp(-\epsilon_1^2e^{c_0}/4),\quad f_1(\theta;\lambda)\leq \delta_1(\lambda).$$
Then we have for $r\geq\epsilon_1e^{c_0/2}$, 
$$\frac{\sigma(re^{c_0/2})}{\sigma(r)}=\frac{1-(e^{-r^2/4})^{e^{c_0}}}{e^{c_0}(1-e^{-r^2/4})}=f_1(e^{-r^2/4};e^{c_0})\leq \delta_1(e^{c_0}),\quad \text{\small since }e^{c_0}>1,$$
which proves \eqref{eq.C3} with $c=1-\delta_1(e^{c_0})$. Thus \eqref{eq.case1sigma.2} and \eqref{eq.case2sigma.2} are proved.

Now we turn to prove \eqref{eq.case3sigma.2}, which is equivalent to the following
\begin{equation}
\forall r_k<\epsilon_1,\qquad
\begin{cases}
\displaystyle\sigma(r)-\sigma(r_k)\leq -c_3\big(1-\sigma(r)\big),&\text{for }r\geq r_ke^{c_0/2},\\
\displaystyle\sigma(r)-\sigma(r_k)\geq c_3\big(1-\sigma(r_k)\big),&\text{for }r\leq r_ke^{-c_0/2}.
\end{cases}
\end{equation}
Since $1-\sigma(r)$ is increasing, we need only to prove
\begin{equation}\label{eq.C4}
\exists c_3\in(0,1),\ \forall r<\epsilon_1,\quad 1-\sigma(r)\leq(1-c_3)\big(1-\sigma(re^{c_0/2})\big).
\end{equation}
By direct computation, we find that for any $\lambda>1$, the function
$$(0,+\infty)\ni x\mapsto f_2(x;\lambda)=\frac{\lambda(e^{-x}-1+x)}{e^{-\lambda x}-1+\lambda x}$$
is continuous on $[0,+\infty)$, $f_2(0;\lambda)=\lambda^{-1}$ and $f_2(x;\lambda)<1$ for all $x>0$.
Hence for any $\lambda>1$, there exists $0<\delta_2(\lambda)<1$ such that 
$$f_2(x;\lambda)<\delta_2(\lambda),\quad \forall 0<x<1/4.$$
We get for all $r<\epsilon_1<1$,
$$\frac{1-\sigma(r)}{1-\sigma(re^{c_0/2})}=e^{c_0}\frac{e^{-r^2/4}-1+r^2/4}{e^{-e^{c_0} r^2/4}-1+e^{c_0}r^2/4}=f_2(r^2/4;e^{c_0})<\delta_2(e^{c_0}),$$
which proves \eqref{eq.C4} with $c_3=1-\delta_2(e^{c_0})$ thus \eqref{eq.case3sigma.2} is proved. 
The proof of Proposition \ref{prop.sigma} is now complete.
\end{proof}

\bibliography{oseenvortices}
\nocite{*}
\bibliographystyle{amsplain}


\end{document}